\def\bE{\mathbb E}
\def\bE{\mathbb E}
\def\R{\mathbb R}
\def\Q{\theta_{\mathrm{mx}}}
\def\E{\mathbb{E}}
\newtheorem{Theorem}{Theorem}
\newtheorem{lemma}[Theorem]{Lemma}
\newtheorem{Corollary}[Theorem]{Corollary}
\newtheorem{proposition}[Theorem]{Proposition}
\newtheorem{Assumption}{Assumption}
\newtheorem{remark}{Remark}
\def\argmin{\mathop{\rm arg\,min}}
\def\rank{{\rm rank}}
\def\keywords{\vspace{.5em}
{\textit{Keywords}:\,\relax%
}}
\def\endkeywords{\par}
\def\subjclass{\vspace{.5em}
{\textit{AMS 2000 subject classification}:\,\relax%
}}
\def\endsubjclass{\par}
\newcommand{\mX}{X}
\newcommand{\mZ}{Z}
\newcommand{\mB}{B}
\newcommand{\mQ}{Q}
\newcommand{\mtheta}{\theta}
\newcommand{\mTheta}{\Theta}
\newcommand{\dimd}{d}
\newcommand{\dimk}{k}
\newcommand{\dims}{s}
\newcommand{\dimn}{n}
\newcommand{\dimdone}{n}
\newcommand{\dimdtwo}{m}
\newcommand{\dimkone}{k_{\dimdone}}
\newcommand{\dimsone}{s_{\dimdone}}
\begin{document}

\title{Structured Matrix Estimation and Completion}
\author{ Olga Klopp, \textit{ESSEC and CREST}\footnote{kloppolga@math.cnrs.fr}\\
	Yu Lu,\textit{Yale University }\footnote{yu.lu@yale.edu}\\ Alexandre B. Tsybakov, \textit{ENSAE, UMR CNRS 9194 }\footnote{alexandre.tsybakov@ensae.fr}\\ Harrison H. Zhou, \textit{Yale University }\footnote{huibin.zhou@yale.edu}\\
	}
\maketitle
\begin{abstract}
	We study the problem of matrix estimation and matrix completion under a general framework. This framework includes several important models as special cases such as the gaussian mixture model, mixed membership model, bi-clustering model and dictionary learning. We consider the optimal convergence rates in a minimax sense for estimation of the signal matrix under the Frobenius norm and under the spectral norm. As a consequence of our general result we obtain minimax optimal rates of convergence for various special models.
\end{abstract}

\keywords {matrix completion, matrix estimation, minimax optimality}
\endkeywords

\subjclass{62J99, 62H12, 60B20, 15A83 }
\endsubjclass


\section{Introduction}
Over the past decade, there have been considerable interest in statistical inference for high-dimensional matrices. A fundamental   model in this context is the matrix de-noising model, under which one observes a matrix $\theta^{*}+W$ where $\theta^{*}$ is an unknown non-random $n\times m$ matrix of interest, and $W$ is 
a random noise matrix. The aim is to estimate $\theta^{*}$ from such observations. Often in applications a part of elements of $M$ is missing. The problem of  reconstructing the signal matrix $\theta^{*}$ given partial observations of its entries is known as matrix completion problem. There has been an important research in the past years devoted to accurate  matrix completion methods.

In general, the signal $\theta^{*}$ cannot be recovered consistently from noisy and possibly missing observations. If we only know that $\theta^{*}$ is an arbitrary $n\times m$ matrix, the guaranteed error of estimating $\theta^{*}$ from noisy observations can be prohibitively high.  However, if $\theta^{*}$  has an additional structure one can expect to  estimate it with high accuracy from a moderate number of noisy observations. The algorithmic   and analytical tractability of the problem depends on the type of adopted structural model. A popular assumption in the matrix completion literature is that the unknown matrix $\theta^*$ is of low rank or can be well approximated by a low rank matrix. Significant progresses have been made on low rank matrix estimation and completion problems, see e.g., 
\cite{Candes_Tao,candes-plan-noise,Gross, Keshavan, KLT11,wainwright-weighted, foygel-serebro-concentration,klopp_general,Cai_Zhou}.
However, in several applications, the signal matrix $\theta^*$ can have other than just low rank structure. Some examples are as follows. 
\begin{itemize}
	\item \textit{Biology}. The biological data are sometimes expected to have clustering structures. For example, in the gene microarray data, a large number of gene expression levels are measured under different experimental conditions. It has been observed in the experiments that there is a bi-clustering structure on the genes \cite{cheng2000biclustering}. This means that, besides being of low rank, the gene microarray data can be rearranged to approximately have a block structure. 
	
	
	
	\item \textit{Computer Vision}. To capture higher-level features in natural images, it is common to represent data as a sparse linear combination of basis elements \cite{olshausen1997sparse} leading to sparse coding models. Unlike the principle component analysis that looks for low rank decompositions, sparse coding learns useful  representations with  number of basis vectors, which is often greater than the dimension of the data.

	\item \textit{Networks}. In network models, such as social networks or citation networks, the links between objects are usually governed by the underlying community structures. To capture such structures, several block models have been recently proposed with the purpose of explaining the network data \cite{holland1983stochastic, airoldi2008mixed, karrer2011stochastic}. 
\end{itemize}

While there are some successful algorithmic advancements on adapting new structures in these specific applications, not much is known on the fundamental limits of statistical inference for the corresponding models. A few exceptions are the  stochastic block model  \cite{gao2014rate, klopp_graphon} and the bi-clustering model \cite{gao2015optimal}. However, many other structures of signal matrix are not analyzed. 

The aim of this paper is to study a general framework of estimating structured matrices. 
We consider 
a unified model that includes gaussian mixture model,  
mixed membership model  \cite{airoldi2008mixed}, bi-clustering model  \cite{hartigan1972direct}, and dictionary learning as special cases. We first study the optimal convergence rates in a minimax sense for estimation of the signal matrix under the Frobenius norm and under the spectral norm from complete observations on the sparsity classes of matrices. Then, we investigate this problem in the partial observations regime (structured matrix completion problem) and study the minimax optimal rates under the same norms. We also establish accurate oracle inequalities for the suggested methods.

\section{Notation}\label{notation}
This section provides a brief summary of the notation used throughout this paper. Let $A,B$ be matrices in $\mathbb{R}^{n\times m}$.
\begin{itemize}
	\item For a matrix $A$, $A_{ij}$ is its $ (i, j)$th entry, $A_{\cdot j}$ is its $j$th column and $A_{i\cdot }$ is its $i$th row.
	\item The scalar product of two matrices $A,B$ of the same dimensions is denoted by $\langle A,B\rangle =\mathrm{tr}(A^{T}B).$
	\item We denote by $\Vert A\Vert_{2}$ the Frobenius norm of $A$ and by $\Vert A\Vert_{\infty}$ the largest absolute value of its entries:  
	$\left\Vert A\right\Vert_{\infty}=\underset{i,j}{\max}\mid A_{ij}\mid$. The spectral norm of $A$ is denoted by $\Vert A\Vert$.
	\item For $x\in \mathbb{R}^k$, we denote by $\Vert x\Vert_0$ its $l_0$-norm (the number of non-zero components of $x$), and by
	$\Vert x\Vert_q$ its $l_q$-norm, $1\le q\le \infty$.
	\item  We denote by $\Vert A\Vert_{0,\infty}$ the largest $l_0$-norm of the rows of  $A\in \mathbb{R}^{n\times k}$:
	$$
	\|A\|_{0,\infty} =\max_{1\leq i\leq p} \|A_{i\cdot}\|_0.
	$$ 
	\item For any $i\in\mathbb{N}$, we write for brevity $[i]=\{1,\dots,i\}$.
	\item   Given a matrix $A=(A_{ij})\in \R^{n\times m}$, and a set of indices $I\subset [n]\times[ m]$, we define the  restriction of $A$ on $I$ as a matrix $A_{I}$ with elements $\left (A_{I}\right )_{ij}=A_{ij}$ if $(i,j)\in I$ and $\left (A_{I}\right )_{ij}=0$ otherwise. 
\item The notation ${\mathbf I}_{k\times k}$ and ${\mathbf 0}_{k\times l}$ (abbreviated to ${\mathbf I}$ and ${\mathbf 0}$ when there is no ambiguity) stands for the $k\times k$ identity matrix and  the $k\times l$ matrix with all entries 0, respectively.
	\item We denote by $\vert S\vert$ the cardinality of a finite set $S$, by $\lfloor x\rfloor$  the integer part of $x\in \R$, and by $\lceil x\rceil$  the smallest integer greater than $x\in \R$.
	\item  We denote by $\mathcal{N}_{\epsilon}(\mathcal{A})$ the $\epsilon-$covering number, under the Frobenius norm, of a set $\mathcal{A}$ of matrices.
	
\end{itemize}

\section{General model and examples}\label{model_sampling}

Assume that we observe a matrix $Y=(Y_{ij})\in \mathbb{R}^{n\times m}$ with entries
\begin{equation} \label{eq:modelmc}
Y_{ij}=E_{ij}\left (\theta_{ij}^*+\xi_{ij}\right ), \quad i=1,\dots,n, \quad i=1,\dots,m,
\end{equation} 
where $\theta_{ij}^*$ are the entries of the unknown matrix of interest
$\theta^{*}=(\theta_{ij}^*) \in \mathbb{R}^{n\times m}$, the values $\xi_{ij}$ are independent random variables representing the noise, 
and $E_{ij}$ are i.i.d. Bernoulli variables with parameter $p\in (0,1]$ such that $(E_{ij})$ is independent of $(\xi_{ij})$.

Model \eqref{eq:modelmc} is called the matrix completion model. Under this model,  an entry of matrix $\theta^{*}$ is observed with noise (independently of the other entries) with probability $p$, and it is not observed with probability $1-p$. We can equivalently write \eqref{eq:modelmc} in the form 
\begin{equation}\label{model}
Y/p=\theta^{*}+W,
\end{equation}
where $W$ is a matrix with entries 
$$W_{ij}= \theta_{ij}^*(E_{ij}-p)/p +\xi_{ij}E_{ij}/p.$$
The model with complete noisy observations
is a special case of \eqref{eq:modelmc} (and equivalently of \eqref{model}) corresponding to $p=1$. In this case, $W_{ij}=\xi_{ij}$.

We denote by $\mathbb{P}_{\mtheta^*}$ the  probability distribution of $Y$ satisfying \eqref{eq:modelmc} and by $\mathbb{E}_{\mtheta^*}$ the corresponding expectation. When there is no ambiguity, we abbreviate $\mathbb{P}_{\mtheta^*}$ and $\mathbb{E}_{\mtheta^*}$ to $\mathbb{P}$ and $\mathbb{E}$, respectively.

 We assume that $\xi_{ij}$ are independent zero mean sub-Gaussian random variables. 
The sub-Gaussian property means that the following assumption is satisfied. 
\begin{Assumption}\label{assumption_noise}
	There exists $\sigma>0$ such that, for all $(i,j)\in[n]\times[m]$,
	\begin{align*}
	\forall \ \lambda\in \mathbb{R}, \quad \bE\exp\left (\lambda \xi_{ij}\right )\leq \exp(\lambda^{2}\sigma^{2}/2).
	\end{align*}
\end{Assumption}

We assume that the signal matrix $\theta^{*}$ is structured, that is, it can be factorized using sparse factors. Specifically, let $s_n, k_n, s_m, k_m$ be integers such $0\leq s_n\leq k_n$ and $0\leq s_m\leq k_m$. We assume that $$\theta^{*}\in\Theta(s_n,s_m)\subset \mathbb{R}^{n\times m},$$
where
\begin{align*}
\Theta(s_n,s_m)=\{\theta=XBZ^T\,:\,X\in \mathcal{A}_{s_n},B\in \mathbb{R}^{k_n\times k_m}\;\text{and}\;Z\in \mathcal{A}_{s_m} \}.
\end{align*}  
Here, for $s_n=0$ we assume that $n=k_n$ and the set $\mathcal{A}_{s_n}$ is a set containing only one element, which is the $n\times n$ identity matrix, and for $1\leq s_n\leq k_n$,
\begin{align}\label{asn}
\mathcal{A}_{s_n}=\mathcal{A}_{s_n}(n,k_n)=\{A\in \mathcal{D}_{n}^{n\times k_n},\Vert A_{ i\cdot}\Vert_{0}\leq s_n,\;\text{for all}\; i\in[n]\}
\end{align}  
where the set $\mathcal{D}_{n}$ is a subset of $\mathbb{R}$ called an alphabet. The set $\mathcal{A}_{s_m}$ is defined analogously by replacing $n$ by $m$. 
We will also consider the class $\Theta_{*}(s_n,s_m)$ defined analogously to $\Theta(s_n,s_m)$, with the only difference that the inequality in \eqref{asn} is replaced by the equality.

Choosing different values of $s_n, k_n, s_m, k_m$, and different alphabets we obtain several well-known examples of matrix structures. 

\begin{itemize}
	\item Mixture Model: 
	\begin{eqnarray*}\label{eq:MG}
		\nonumber \Theta_{MM}&=&\{ \theta \in \mathbb{R}^{n \times m}:  \theta = XB \textrm{ for some } B \in \mathbb{R}^{k \times m} \\
		&& \textrm{ and } X \in \{0,1\}^{n \times k} \textrm{ with } \|X_{i\cdot}\|_0=1, \forall i \in [n]\}.
	\end{eqnarray*}
	
	\item Sparse Dictionary Learning: 
	\begin{eqnarray*} \label{eq:SDL}
		\nonumber \mTheta_{SDL}&=&\{ \mtheta=\mB\mZ^T \in \mathbb{R}^{\dimd \times \dimn}: \mB \in \mathbb{R}^{\dimd \times \dimk}, \mZ \in \mathbb{R}^{n \times k}\, \textrm{with}\, \Vert Z_{i \cdot}\Vert_0 \leq s, \forall i \in [n] \}.
	\end{eqnarray*}
	
	\item Stochastic Block Model: 
	\begin{eqnarray*} \label{eq:SBM}
		\nonumber \mTheta_{SBM}&=&\{ \mtheta=Z\mB\mZ^T \in \mathbb{R}^{\dimn \times \dimn}:  \mB \in [0,1]^{\dimk \times \dimk}, \mZ \in \{0,1\}^{n \times k}\, \textrm{with}\, \Vert Z_{i\cdot}\Vert_0 =1, \forall i \in [n] \}.
	\end{eqnarray*}
	
	\item Mixed Membership Model: 
	\begin{eqnarray*} \label{eq:MMM}
		\nonumber \mTheta_{MMM} &=&\{ \mtheta=Z\mB\mZ^T \in \mathbb{R}^{\dimn \times \dimn}: \mB \in [0,1]^{\dimk \times \dimk}, \mZ \in [0,1]^{n \times k},\\
		&& \, \textrm{with}\, \|\mZ_{i\cdot}\|_1=1, \|\mZ_{i\cdot}\|_0 \le s, \textrm{ for all }i \in [\dimn]\}.
	\end{eqnarray*}
	
	\item {Bi-clustering Model:}
	\begin{eqnarray*} \label{eq:BCM}
		\nonumber \mTheta_{Bi} &=&\{ \mtheta=X\mB\mZ^T \in \mathbb{R}^{n \times m}: \mB \in [0,1]^{\dimk_n \times \dimk_m}, X \in \{0,1\}^{n \times k_n}, \mZ \in \{0,1\}^{m\times\dimk_m }\\
		&& \, \textrm{with}\, \|X_{i\cdot }\|_0=1, \forall i \in [n], \|\mZ_{i\cdot }\|_0=1, \forall i \in [m] \}.
	\end{eqnarray*}
	
\end{itemize}

Here, the classes   $\mTheta_{SBM}$ and $\mTheta_{MMM}$ are not exactly equal to but rather subclasses of $\Theta_{*}(1,1)$ 
and $\Theta_{*}(s,s)$, respectively.

Statistical properties of inference methods under the general model \eqref{eq:modelmc} are far from being understood. Some results were obtained in  particular settings such as the Mixture Model and Stochastic Block Model.  

Gaussian mixture models provide a useful framework for several machine learning problems such as clustering, density estimation and classification. There is a quite long  history of research on mixtures of Gaussians.  We mention only some of this work including
methods for estimating mixtures such as pairwise distances \cite{dasgupta_mixtures_gaussians,dasgupta_schulman}, spectral methods \cite{vempala_wang,hsu_kakade} or
the method of moments \cite{chaudhuri_dasgupta,belkin_sinha}. Most of these papers are concerned with construction of computationally efficient methods but do not address the issue of statistical optimality. In \cite{Azizyan_Singh} authors provide precise information theoretic bounds
on the clustering accuracy and sample complexity of learning a mixture of two
isotropic Gaussians in high dimensions under small mean separation. 

The Stochastic Block Model is a  useful benchmark for the task of recovering community structure in graph data. More generally, any sufficiently large  graph behaves approximately like a stochastic block model for some $k$, which can be large. The problem of estimation of the probability matrix $\theta^{*}$ in the stochastic block model under the Frobenius norm was considered by several authors \cite{chatterjee2014matrix,WolfeOlhede, xu2014edge, chan2014consistent,borgs2015} but convergence rates obtained there are suboptimal. More recently,  minimax  optimal rates of estimation were obtained by  Gao et al. \cite{gao2014rate} in the dense case and by Klopp et al \cite{klopp_graphon} in the sparse case.

Recently, a  related problem to ours was studied by Soni et al. \cite{soni_factor}. These authors consider the case when the matrix to be estimated is the product of two matrices, one of which, called a sparse factor, has a small number of non-zero entries (in contrast to this, we assume row-sparsity). The estimator studied in \cite{soni_factor} is a sieve maximum likelihood estimator penalized by the $l_0-$norm of the sparse factor where the sieve is chosen as a specific countable set.

\section{Results for the case of finite alphabets}\label{main_results}

We start by considering the case of finite alphabets $ \mathcal{D}_{n}$ and $\mathcal{D}_{m}$ and complete observations, that is $p=1$. In this section, we establish the minimax optimal rates of estimation of $\theta^*$ under the Frobenius norm and we show that they are attained by 
 the least squares estimator 
\begin{equation} \label{eq:lsestimator}
\hat {\theta}\in \underset{\theta\in\Theta}{\argmin}\Vert Y-\theta\Vert^{2}_{2}
\end{equation}
where $\Theta$ is a suitable class of structured matrices. We first derive an upper bound on the risk of this estimator uniformly over the  classes $\Theta=\Theta(s_n,s_m)$.  
The following  theorem  provides an oracle inequality for the  Frobenius risk of $\hat {\theta}$. Here and in what follows, we adopt the convention that $0 \log \frac{x}{0}=0$ for any $x>0$. We also set for brevity
$$d=n+m, \quad r_n = n \wedge k_n, \quad r_m= m \wedge k_m.
$$
\begin{Theorem} \label{thm:upperfinite} 
Let Assumption \ref{assumption_noise} hold, and let $p=1$. If the sets $\mathcal{D}_{\dimdone}$ and $\mathcal{D}_{\dimdtwo}$ are finite, there exists a constant $C>0$  depending only on the cardinalities of $\mathcal{D}_{\dimdone}$ and $\mathcal{D}_{\dimdtwo}$ such that, for all $\mtheta^{*}\in \mathbb{R}^{n\times m}$ and all $\epsilon>0$, the risk of the estimator~\eqref{eq:lsestimator} satisfies
\begin{equation*}
\E_{\theta^*}\left\{||\hat{\mtheta} - \mtheta^{*}||_2^2 \right\} \leq (1+\epsilon)\inf_{\bar\theta \in \Theta(s_n,s_m)}\|\bar\theta-\theta^*\|_2^2+\dfrac{C \sigma^2}{\epsilon} ( R_X+R_B+R_Z),
\end{equation*}
where $R_X= nr_m \wedge ns_n\log \frac{ek_n}{s_n}$, $R_B=r_n r_m$, $R_Z=mr_n \wedge ms_m \log \frac{ek_m}{s_m}$.  
\end{Theorem}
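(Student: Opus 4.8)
The plan is the standard route for least squares over a union of low-dimensional sets, the new feature being that $\Theta(s_n,s_m)$ admits several competing covers by linear subspaces and one must use, in each of the three slots $X$, $B$, $Z$, whichever is cheaper. Since $p=1$ we have $Y=\theta^{*}+W$ with $W_{ij}=\xi_{ij}$. Fix $\bar\theta\in\Theta(s_n,s_m)$; minimality of $\hat\theta$ gives $\Vert Y-\hat\theta\Vert_2^{2}\le\Vert Y-\bar\theta\Vert_2^{2}$, and expanding the squares yields the basic inequality
\begin{equation*}
\Vert\hat\theta-\theta^{*}\Vert_2^{2}\ \le\ \Vert\bar\theta-\theta^{*}\Vert_2^{2}+2\langle W,\hat\theta-\bar\theta\rangle ,
\end{equation*}
so everything reduces to a uniform-in-$\Theta(s_n,s_m)$ bound on the linear stochastic term $\langle W,\hat\theta-\bar\theta\rangle$.

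Because the alphabets are finite, $\Theta(s_n,s_m)=\bigcup_{X\in\mathcal A_{s_n},\,Z\in\mathcal A_{s_m}}\mathcal L_{X,Z}$ is a \emph{finite} union of linear subspaces $\mathcal L_{X,Z}:=\{XBZ^{T}:B\in\R^{k_n\times k_m}\}$, with $\dim\mathcal L_{X,Z}\le\rank(X)\,\rank(Z)\le r_nr_m$, while $\log|\mathcal A_{s_n}|\le ns_n\log(ek_n|\mathcal D_n|/s_n)\le C'\,ns_n\log\frac{ek_n}{s_n}$ and similarly for $\mathcal A_{s_m}$, with $C'$ depending only on $|\mathcal D_n|,|\mathcal D_m|$ (the inequality $\log(ek_n/s_n)\ge1$ absorbing the alphabet factor). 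There are also coarser inclusions that discard part of the factorization: for fixed $Z$, $\bigcup_X\mathcal L_{X,Z}\subseteq\mathcal R_Z:=\{\Phi Z^{T}:\Phi\in\R^{n\times k_m}\}$, of dimension $\le nr_m$; for fixed $X$, $\bigcup_Z\mathcal L_{X,Z}\subseteq\mathcal C_X:=\{X\Psi:\Psi\in\R^{k_n\times m}\}$, of dimension $\le mr_n$; and, discarding both factors, $\rank(\theta)\le r_n\wedge r_m$ for every $\theta\in\Theta(s_n,s_m)$.

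For a fixed subspace $V\subseteq\R^{n\times m}$ of dimension $D$, Assumption~\ref{assumption_noise} and independence make $\langle W,u\rangle$ sub-Gaussian with variance proxy $\sigma^{2}$ for every unit $u$, so a $\tfrac12$-net of the unit sphere of $V$ (at most $5^{D}$ points) together with a union bound over a family of $N$ such subspaces give $\E[\max_{\alpha\le N}\Vert\Pi_{V_\alpha}W\Vert_2^{2}]\le C\sigma^{2}(D+\log N)$, with a matching exponential tail. I would apply this, after adjoining $\spann(\bar\theta)$ to each subspace (which only adds $1$ to the dimension), to the four covers above: (i) the $|\mathcal A_{s_n}||\mathcal A_{s_m}|$ subspaces $\mathcal L_{X,Z}$, $D\le r_nr_m$; (ii) the $|\mathcal A_{s_m}|$ subspaces $\mathcal R_Z$, $D\le nr_m$; (iii) the $|\mathcal A_{s_n}|$ subspaces $\mathcal C_X$, $D\le mr_n$; and (iv) the non-subspace set of matrices of rank $\le r_n\wedge r_m$, for which the net step is replaced by the entropy bound $\log\mathcal N_\epsilon(\{\rank\le\rho,\,\Vert\cdot\Vert_2\le1\})\lesssim\rho(n+m)\log(c/\epsilon)$ followed by a short peeling over the scale $\Vert\hat\theta-\bar\theta\Vert_2$ to remove the logarithm. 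Since in cases (i)--(iii) $\hat\theta-\bar\theta$ lies in the relevant augmented subspace, in each case $\langle W,\hat\theta-\bar\theta\rangle\le T\Vert\hat\theta-\bar\theta\Vert_2$ with $\E[T^{2}]$ of order $\sigma^{2}$ times, respectively, $r_nr_m+ns_n\log\frac{ek_n}{s_n}+ms_m\log\frac{ek_m}{s_m}$, $nr_m+ms_m\log\frac{ek_m}{s_m}$, $mr_n+ns_n\log\frac{ek_n}{s_n}$, and $(r_n\wedge r_m)(n+m)$.

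Plugging $\langle W,\hat\theta-\bar\theta\rangle\le T(\Vert\hat\theta-\theta^{*}\Vert_2+\Vert\bar\theta-\theta^{*}\Vert_2)$ into the basic inequality, solving the resulting quadratic inequality for $\Vert\hat\theta-\theta^{*}\Vert_2$, and splitting the cross term by Young's inequality with weight $\epsilon$, one gets $\Vert\hat\theta-\theta^{*}\Vert_2^{2}\le(1+\epsilon)\Vert\bar\theta-\theta^{*}\Vert_2^{2}+\frac{C}{\epsilon}T^{2}$; taking expectation, then the minimum over the four choices of $T$, and finally the infimum over $\bar\theta\in\Theta(s_n,s_m)$, yields the theorem once one verifies that the minimum of the four budgets equals $R_X+R_B+R_Z$. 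This verification is the crux: expanding $(nr_m\wedge ns_n\log\frac{ek_n}{s_n})+r_nr_m+(mr_n\wedge ms_m\log\frac{ek_m}{s_m})$ by $\min(a,b)+\min(c,d)=\min\{a{+}c,a{+}d,b{+}c,b{+}d\}$ gives four sums, each of which is dominated, term by term, by one of the four budgets, so the two minima agree; in particular the ``discard-both-factors'' rank cover (iv) is indispensable, being the only source of the product term $nr_m$ (resp.\ $mr_n$) in the regime where $s_n$ (resp.\ $s_m$) is large while $k_n$ (resp.\ $k_m$) is large compared with $m$ (resp.\ $n$), which is also why that branch needs the low-rank metric-entropy estimate and peeling rather than a plain projection. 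The rest is routine bookkeeping: tracking the dependence of $C$ on $|\mathcal D_n|,|\mathcal D_m|$ through the cardinality bounds, and extracting the sharp leading factor $1+\epsilon$ from Young's inequality.
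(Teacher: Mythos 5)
Your proposal is correct and follows essentially the same route as the paper's proof: the basic least-squares inequality, a uniform bound on $\langle W,\hat\theta-\bar\theta\rangle$ over finite unions of linear subspaces (union bound over the alphabet-indexed factors, using $\log|\mathcal{A}_{s_n}|\le ns_n\log(ek_n|\mathcal{D}_n|/s_n)$, which is where the alphabet-dependent constant enters), the Young/weighted split giving the $(1+\epsilon)$ factor, and a minimum over several covers. Your covers (i)--(iii) coincide with the paper's $\bar R_2(n,m)$, $\bar R_1(m,n)$ and $\bar R_1(n,m)$; the paper bounds the supremum via orthogonal projections and the Hsu--Kakade--Zhang quadratic-form tail rather than a sphere net, an immaterial difference. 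The one genuine divergence is the fourth cover: the paper simply uses the full space, budget $nm$, whereas you use the rank-$(r_n\wedge r_m)$ class with a metric-entropy/peeling bound yielding $(r_n\wedge r_m)(n+m)$. Your assertion that this rank cover is indispensable is not correct: in the regime where simultaneously $R_X=nr_m$ and $R_Z=mr_n$, either $r_m=m$ or $r_n=n$ (and then $nm$ is already below the target $nr_m+r_nr_m+mr_n$), or else $r_m=k_m<m$ and $r_n=k_n<n$, and the constraints $k_m\le s_n\log(ek_n/s_n)\le k_n$ and $k_n\le s_m\log(ek_m/s_m)\le k_m$ force $k_n=k_m$ and $ns_n\log(ek_n/s_n)=nr_m$, so your cover (iii) already delivers $nr_m+mr_n$ up to constants. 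Thus the paper gets by with purely linear-subspace bounds at the cost of this small case analysis (which it compresses into ``simplifying the expression for $\bar R_3$''), while your extra cover buys a term-by-term verification of $\min\{\text{budgets}\}\lesssim R_X+R_B+R_Z$ at the price of the nonlinear low-rank entropy-plus-peeling step; either way the argument is sound, provided you note that differences of elements of the rank class have rank at most $2(r_n\wedge r_m)$ and that $\bar\theta$ must be adjoined to each cover, as you do.
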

This theorem is proved in Section \ref{proof_thm_upperfinite}. 

Note that if the set $\mathcal{A}_{s_n}$ and/or $\mathcal{A}_{s_m}$ in the definition of $\Theta(s_n,s_m)$ contains only the identity matrix, the corresponding term $R_X$ and/or $R_Z$ disappears from the upper bound of Theorem \ref{thm:upperfinite}. 

In Theorem \ref{thm:upperfinite}, the true signal $\theta^*$ can be arbitrary. By assuming that  $\theta^* \in \Theta(s_n,s_m)$, we immediately deduce from Theorem \ref{thm:upperfinite} that the following bound holds.

\begin{Corollary} \label{cor:1}
Under the assumptions of Theorem~\ref{thm:upperfinite},
$$
\sup_{\theta \in \Theta(s_n,s_m)}\E_{\theta} \left\{ ||\hat{\mtheta} - \mtheta||_2^2 \right\} \leq
 C \sigma^2( R_X+R_B+R_Z)
$$
for a constant $C>0$  depending only on the cardinalities of $\mathcal{D}_{\dimdone}$ and $\mathcal{D}_{\dimdtwo}$.
\end{Corollary}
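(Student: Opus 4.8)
The plan is to derive Corollary~\ref{cor:1} directly from Theorem~\ref{thm:upperfinite} by specializing the oracle inequality. First I would fix an arbitrary $\theta \in \Theta(s_n,s_m)$ and apply Theorem~\ref{thm:upperfinite} with $\theta^* = \theta$. Since $\theta^*$ is now itself an element of $\Theta(s_n,s_m)$, the infimum term $\inf_{\bar\theta \in \Theta(s_n,s_m)} \|\bar\theta - \theta^*\|_2^2$ is zero (take $\bar\theta = \theta$), so the bound collapses to
\begin{equation*}
\E_{\theta}\left\{ \|\hat\theta - \theta\|_2^2 \right\} \leq \frac{C\sigma^2}{\epsilon}(R_X + R_B + R_Z)
\end{equation*}
for every $\epsilon > 0$. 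This holds for all $\epsilon > 0$, but the right-hand side is decreasing in $\epsilon$ while the left-hand side does not depend on $\epsilon$, so there is no benefit in sending $\epsilon \to \infty$ beyond a fixed choice; I would simply pick $\epsilon = 1$ (or any fixed positive constant), which yields $\E_\theta\{\|\hat\theta - \theta\|_2^2\} \leq C'\sigma^2(R_X + R_B + R_Z)$ with $C' = C$, the constant from Theorem~\ref{thm:upperfinite} depending only on $|\mathcal{D}_{\dimdone}|$ and $|\mathcal{D}_{\dimdtwo}|$.

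Finally, since the constant $C'$ and the quantities $R_X, R_B, R_Z$ do not depend on the particular choice of $\theta \in \Theta(s_n,s_m)$, I would take the supremum over $\theta \in \Theta(s_n,s_m)$ on the left-hand side to obtain
\begin{equation*}
\sup_{\theta \in \Theta(s_n,s_m)} \E_{\theta}\left\{ \|\hat\theta - \theta\|_2^2 \right\} \leq C'\sigma^2(R_X + R_B + R_Z),
\end{equation*}
which is exactly the claimed bound. This is essentially immediate and there is no real obstacle: the only point requiring minimal care is observing that the oracle inequality is uniform in $\theta^*$ over all of $\mathbb{R}^{n\times m}$, so in particular it applies pointwise to each $\theta$ in the structured class, and that the leading-term coefficient $(1+\epsilon)$ multiplies a quantity that vanishes, so the $(1+\epsilon)$ factor is harmless. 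No further probabilistic or combinatorial work is needed — the corollary is a direct corollary in the strict sense.
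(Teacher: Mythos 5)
Your proposal is correct and is exactly the argument the paper intends: since $\theta^*=\theta\in\Theta(s_n,s_m)$ makes the infimum term vanish, fixing $\epsilon=1$ and taking the supremum over $\theta$ gives the stated bound, which is why the paper presents Corollary~\ref{cor:1} as an immediate consequence of Theorem~\ref{thm:upperfinite} without a separate proof.
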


The next theorem provides a lower bound showing that the convergence rate of Corollary~\ref{cor:1} is minimax optimal. This lower bound is valid for the general matrix completion model \eqref{eq:modelmc}. In what follows, the notation $\inf_{\hat{\vartheta}}$ stands for the infimum over all estimators $\hat\vartheta$ taking values in $\R^{n\times m}$.
\begin{Theorem} \label{thm:lowerbound}
Let the entries $W_{ij}$ of matrix $W$ in model (\ref{model}) be  independent random variables with Gaussian distribution $\mathcal{N}(0, \sigma^2)$, and let the alphabets $\mathcal{D}_{\dimdone}$ and $\mathcal{D}_{\dimdtwo}$ contain the set $\{0,1\}$.
There exists an absolute constant $C>0$ such that
\begin{equation} \label{eq:lower}
\inf_{\hat{\vartheta}}\sup_{\theta\in\Theta(s_m,s_n)}\mathbb{P}_{\theta} \left\{ ||\hat{\vartheta} - \theta||_2^2 \geq \frac{C \sigma^2}{p}\left(R_X + R_B + R_Z \right) \right\}\geq 0.1,
\end{equation}
and
\begin{equation} \label{eq:expectlower}
\inf_{\hat{\vartheta}}\sup_{\theta\in\Theta(s_m,s_n)} \mathbb{E}_{\theta}  ||\hat{\vartheta} - \theta||_2^2 \geq \frac{C \sigma^2}{p}\left(R_X + R_B + R_Z \right).
\end{equation}
Furthermore, the same inequalities hold with  $\Theta_*(s_m,s_n)$ in place of $\Theta(s_m,s_n)$ if $s_n\in\{0,1\}$ and $s_m\in\{0,1\}$.
\end{Theorem}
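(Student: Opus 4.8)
The plan is to establish the three lower bounds $R_X$, $R_B$, $R_Z$ separately by constructing three families of hard instances inside $\Theta(s_m,s_n)$, applying a standard information-theoretic reduction (Fano / Varshamov-Gilbert), and then combining them. Since all three families will live in the parameter set, the overall rate is the maximum of the three contributions, which is of the same order as their sum; it thus suffices to prove a lower bound of order $\sigma^2 R/p$ for each of the three terms $R\in\{R_X,R_B,R_Z\}$. For the matrix completion case ($p<1$), the only change compared to $p=1$ is that each observed Gaussian entry carries Fisher information $p/\sigma^2$ rather than $1/\sigma^2$ (an entry is seen with probability $p$), so the Kullback-Leibler divergence between two instances scales like $(p/2\sigma^2)\|\theta-\theta'\|_2^2$; this is what produces the $1/p$ factor.

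\textbf{The $R_B$ term.} Here I would fix $X$ and $Z$ to be (padded) identity-type matrices realizing the ambient dimensions $r_n=n\wedge k_n$ and $r_m=m\wedge k_m$ (these are admissible since $\{0,1\}\subset\mathcal D_n,\mathcal D_m$), and let $B$ range over an $l_2$-packing of an $r_n\times r_m$ cube of side proportional to $\sigma\sqrt{p^{-1}}$: by Varshamov-Gilbert there is a subset of size $\exp(c\,r_nr_m)$ with pairwise Frobenius distances of order $\sqrt{r_nr_m}\cdot(\text{side})$ and mutual KL divergences controlled by $c'r_nr_m$. Fano's inequality then yields the bound of order $\sigma^2 r_nr_m/p=\sigma^2 R_B/p$.

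\textbf{The $R_X$ term (and $R_Z$ by symmetry).} The term $R_X=nr_m\wedge ns_n\log(ek_n/s_n)$ is a minimum of two quantities, so I would produce two separate constructions and keep whichever is active. For the $nr_m$ regime: fix $Z$ to an identity-type matrix and let the $n\times r_m$ "effective" matrix $XB$ vary over a cube packing, exactly as in the $R_B$ argument, giving $\sigma^2 nr_m/p$. For the $ns_n\log(ek_n/s_n)$ regime: fix $B$ and $Z$ appropriately and let $X\in\mathcal A_{s_n}$ vary over its sparsity pattern — for each of the $n$ rows choose an $s_n$-subset of $[k_n]$ and place $\{0,1\}$ values. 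The number of such configurations is $(\binom{k_n}{s_n})^n\ge \exp(c\,n s_n\log(k_n/s_n))$; a Gilbert-type pruning on row supports gives a packing whose elements differ in $\gtrsim n s_n$ coordinates of $X$, hence (after choosing $B$ so that these coordinates map to well-separated rows of $\theta$, e.g. $B$ a scaled identity block) in Frobenius distance of order $\sqrt{n s_n}\cdot(\text{magnitude})$, while KL divergence stays $\lesssim n s_n\log(k_n/s_n)$. Calibrating the entry magnitude to $\sigma\sqrt{p^{-1}}$ and invoking Fano gives $\sigma^2 n s_n\log(ek_n/s_n)/p$. For the $\Theta_*$ version with $s_n,s_m\in\{0,1\}$: when $s_n=1$ every row of $X$ has exactly one nonzero entry so $X$ is (up to scaling) a membership matrix and the same construction applies verbatim; when $s_n=0$ the term $R_X$ disappears and $X$ is the identity, consistent with the claim.

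\textbf{Main obstacle.} The delicate point is the $ns_n\log(ek_n/s_n)$ construction: one must choose $B$ (and the alphabet values in $X$) so that a combinatorial Hamming separation between sparsity patterns of $X$ translates into a genuine Frobenius separation of the products $\theta=XBZ^T$ — two different supports could a priori yield nearby or even identical matrices if $B$ has rank defects or aligned columns. I would resolve this by taking $B$ (restricted to the relevant block) to be a scaled identity or a well-conditioned matrix, so that distinct columns of $X$ are mapped to orthogonal, equal-norm directions, and by putting the varying structure in disjoint column supports across the rows of $X$; this guarantees $\|\theta-\theta'\|_2^2\gtrsim (\text{magnitude})^2\cdot d_H(X,X')$. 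A secondary but routine check is verifying that these instances actually lie in $\Theta(s_m,s_n)$ (respectively $\Theta_*$), i.e. that the fixed factors have the required row-sparsity and the alphabets contain the needed values — which holds because $\{0,1\}\subseteq\mathcal D_n\cap\mathcal D_m$ and identity matrices are admissible when the corresponding sparsity index is $0$.
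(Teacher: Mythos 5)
Your overall architecture coincides with the paper's: three families of hard instances inside the class (one for each of $R_X$, $R_B$, $R_Z$), the Gaussian Kullback--Leibler identity $KL(\mathbb P_{\theta},\mathbb P_{\theta'})=\frac{p}{2\sigma^2}\|\theta-\theta'\|_2^2$ supplying the $1/p$ factor, a Varshamov--Gilbert cube for the $R_B$ term, and a multiple-hypothesis (Fano-type) combination; the $R_B$ part, the combination step, and the $\Theta_*$ remark are all fine. The genuine gap is in the $R_X$ (and, symmetrically, $R_Z$) constructions, precisely at the point you flag as the main obstacle: converting Hamming separation of the sparse factor into Frobenius separation of $\theta=XBZ^T$. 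Your proposed resolution --- take the relevant block of $B$ to be a scaled identity or a well-conditioned matrix so that distinct columns of $X$ are mapped to orthogonal, equal-norm directions --- is impossible exactly in the overcomplete regime $k_n>r_m$: each row of $\theta$ lies in the row space of $BZ^T$, which has dimension at most $r_m=m\wedge k_m$, so one cannot map $k_n\gg r_m$ columns of $X$ to orthogonal directions. With an identity-type $B$ the per-row hypothesis set collapses to $s_n$-sparse binary vectors of effective length $\min(k_n,r_m)$, yielding only about $n s_n\log\big(e\min(k_n,r_m)/s_n\big)$ bits instead of the required $n s_n\log(ek_n/s_n)$ (for instance $s_n=1$, $k_n=e^{40}$, $r_m=50$ gives roughly $5n$ versus the target $41n$, and the discrepancy is unbounded over regimes). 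The same difficulty breaks your "$nr_m$ branch" when $n>k_n$ and $s_n\ll r_m$: there $XB$ cannot sweep a cube packing of $\{0,\delta\}^{n\times r_m}$, since each row of $XB$ is a sum of at most $s_n$ rows of $B$, and with orthogonal (or identity-block) rows of $B$ such subset-sums produce only $\exp\big(c\,s_n\log(er_m/s_n)\big)$ well-separated points per row, far short of the needed $\exp(c\,r_m)$.

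The missing ingredient is the probabilistic choice of $B$, which is the heart of the paper's proof: by Lemma~\ref{lm:specialB} (a Johnson--Lindenstrauss statement for a random sign matrix, applied to the sparse binary packing of Lemma~\ref{lm:select}) one finds $Q\in\{-1,1\}^{r\times k}$ with $r>96\log N$ such that $\|Qa-Qb\|_2^2$ lies between $\tfrac{r}{2}\|a-b\|_2^2$ and $\tfrac{3r}{2}\|a-b\|_2^2$ on the chosen finite set; taking $B_0=[\delta Q,\mathbf 0]^T$ gives the two-sided distance control needed simultaneously for the separation and for the KL upper bound, and capping the per-row family at cardinality $\lfloor\exp\big(\min\{r_n/97,\,c_1^*s_m\log(ek_m/s_m)\}\big)\rfloor$ lets a single construction deliver both branches of the minimum in $R_Z$ (and, after transposition, in $R_X$), with a separate elementary argument for the degenerate case where this minimum is of constant order. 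Without this step, or an equivalent device embedding exponentially many nearly equidistant points of comparable norm into an $r_m$-dimensional space, your argument establishes the claimed bound only when $k_n\le r_m$ and $k_m\le r_n$, i.e.\ it does not cover the general (in particular the dictionary-learning, overcomplete) case of the theorem.
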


The proof of Theorem \ref{thm:lowerbound} is given in Section \ref{sec:prooflower}. 

The three ingredients $R_X, R_B$, and $R_Z$ of the optimal rate are coming from the ignorance of $\mX,\mB$ and $\mZ$ respectively.  
The proof is based on constructing subsets of $\Theta$ by fixing two of these parameters to get each of the three terms. The choice of $B$ when fixing the pairs $(X, B)$ and $(Z, B)$ is based on a probabilistic method, namely, Lemma~\ref{lm:specialB}. Similar techniques have been used in \cite{klopp_graphon} to prove the lower bounds for sparse graphon estimation, and in \cite{gao2014rate}.  

\begin{remark}
	Theorem \ref{thm:lowerbound} can be extended to more general sub-Gaussian distributions under
	an additional Kullback-Leibler divergence assumption. Assume that there is a constant $c$ such that the distribution
	of $Y$ in model (\ref{eq:modelmc}) satisfies 
	\begin{equation*}
	KL\left( \mathbb{P}_{\theta },\mathbb{P}_{\theta ^{\prime }}\right) \leq 
	\frac{cp}{2\sigma ^{2}}\Vert \theta -\theta ^{\prime }\Vert _{2}^{2}.
	\end{equation*}
	Let the alphabets $\mathcal{D}_{n}$ and $%
	\mathcal{D}_{m}$ contain the set $\{0,1\}$. Then there exists an absolute constant $C>0$ such that 
	\begin{equation*}
	\inf_{\hat{\vartheta}}\sup_{\theta \in \Theta (s_{m},s_{n})}\mathbb{P}%
	_{\theta }\left\{ ||\hat{\vartheta}-\theta ||_{2}^{2}\geq \frac{C\sigma ^{2}%
	}{p}\left( R_{X}+R_{B}+R_{Z}\right) \right\} \geq 0.1, 
	\end{equation*}%
	and 
	\begin{equation*}
	\inf_{\hat{\vartheta}}\sup_{\theta \in \Theta (s_{m},s_{n})}\mathbb{E}%
	_{\theta }||\hat{\vartheta}-\theta ||_{2}^{2}\geq \frac{C\sigma ^{2}}{p}%
	\left( R_{X}+R_{B}+R_{Z}\right) . 
	\end{equation*}
	The proof of this result is similar to that of Theorem \ref{thm:lowerbound} and only needs to replace the equality in  (\ref{eq:KLZ}) by inequality. In addition, the lower bounds hold with  $\Theta_*(s_m,s_n)$ in place of $\Theta(s_m,s_n)$ if $s_n\in\{0,1\}$ and $s_m\in\{0,1\}$.
\end{remark}

\begin{remark}
	We summarize the minimax rates for some examples introduced in Section \ref{model_sampling} in the following table.
	The case of $\Theta _{SBM}$ is due to \cite{gao2014rate}.
\begin{center}
\begin{tabular}{|c|c|}\hline \vphantom{$\displaystyle\int$}
$\Theta_{MM}$& $\min\{n\log(ek)+km, nm\}$\\
\hline \vphantom{$\displaystyle\int$}
$\Theta _{SDL}$& $\min\{ns\log(ek/s)+kd, nd\}$ \\
\hline \vphantom{$\displaystyle\int$}
$\Theta _{SBM}$& $n\log(ek)+k^2$
\\
\hline \vphantom{$\displaystyle\int$}
$\Theta _{Bi}$& $\min\{n\log(ek_n)+m\log(ek_m)+k_nk_m, nk_m+m\log(ek_m), mk_n+n\log(ek_n),nm\}$
\\
\hline
\end{tabular}
\end{center}
%
%
%
%
	
\end{remark}

\section{Optimal rates in the spectral norm}\label{section_spectral}
In this section we derive the optimal rates of convergence of estimators of $\theta^*$ when the error is measured in the spectral norm. Interestingly, our results imply that these  optimal rates coincide with those obtained for estimation of matrices with no structure. That is, the additional structure that we consider in the present paper does not have any impact on the rate of convergence of the minimax risk when the error is measured in the spectral norm.

The  lower bound under the spectral norm can be obtained as a corollary of the lower bound under the Frobenius norm given by Theorem \ref{thm:lowerbound}. 
\begin{Corollary} \label{cor:speclower}
Under the assumptions of Theorem \ref{thm:lowerbound},
there exists a absolute constant $C'>0$ such that
\begin{equation*} \label{eq:speclower}
\inf_{\hat{\vartheta}}\sup_{\theta\in\Theta(s_m,s_n)}\mathbb{P}_{\theta} \left\{ ||\hat{\vartheta} - \theta||^2 \geq \frac{C' \sigma^2}{p} (n \vee m) \right\}\geq 0.1,
\end{equation*}
and
\begin{equation*} \label{eq:expectspeclower}
\inf_{\hat{\vartheta}}\sup_{\theta\in\Theta(s_m,s_n)} \mathbb{E}_{\theta}  ||\hat{\vartheta} - \theta||^2 \geq \frac{C' \sigma^2}{p}\left(n \vee m \right).
\end{equation*}
\end{Corollary}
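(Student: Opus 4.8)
\emph{Proof plan for Corollary~\ref{cor:speclower}.} Because $\|A\|\le\|A\|_2$ for every matrix $A$, the Frobenius lower bound of Theorem~\ref{thm:lowerbound} does not in itself yield the spectral lower bound stated here. The plan is to restrict attention to a sub-family of $\Theta(s_m,s_n)$ consisting of matrices of rank at most one: on such a sub-family the spectral and Frobenius norms agree (and so do the spectral and Frobenius norms of pairwise differences, which are again of rank at most one), so the very same Varshamov--Gilbert plus Fano argument used to prove Theorem~\ref{thm:lowerbound}, applied on this sub-family, is simultaneously a Frobenius and a spectral lower bound over the whole class. Running the construction ``along the rows'' will produce the term $\sigma^2 m/p$ and ``along the columns'' the term $\sigma^2 n/p$, and the larger of the two is $\sigma^2(n\vee m)/p$.

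\emph{The sub-family.} Set $\gamma = c\sigma/\sqrt p$ with $c>0$ a small absolute constant to be fixed. For $w\in\{0,1\}^m$ let $\theta(w)\in\R^{n\times m}$ be the matrix whose first row equals $\gamma w$ and all of whose other rows vanish. Using $\{0,1\}\subseteq\mathcal D_n\cap\mathcal D_m$, one checks that $\theta(w)\in\Theta(s_m,s_n)$ for every admissible $(s_n,k_n,s_m,k_m)$: it suffices to place the single active row $\gamma w$ into whichever of $X$, $B$, $Z$ is not forced to be the identity. For instance, when $s_n\ge1$ and $s_m\ge1$, take $X$ with first row $(1,0,\dots,0)$ and all other rows zero, $B$ with unique nonzero entry $B_{11}=\gamma$, and $Z$ with $Z_{j1}=w_j$ for $j\in[m]$ and all other entries zero; the degenerate cases $s_n=0$ (so $X=\mathbf I$) and/or $s_m=0$ (so $Z=\mathbf I$) are handled in the same way by moving $\gamma w$ into a free factor. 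Since $\theta(w)-\theta(w')$ has at most one nonzero row,
\begin{equation*}
\|\theta(w)-\theta(w')\| \;=\; \|\theta(w)-\theta(w')\|_2 \;=\; \gamma\,\|w-w'\|_2 .
\end{equation*}

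\emph{The reduction.} By the Varshamov--Gilbert bound choose $\mathcal W\subseteq\{0,1\}^m$ with $|\mathcal W|\ge 2^{m/8}$ and pairwise Hamming distances at least $m/8$; then $\{\theta(w):w\in\mathcal W\}\subseteq\Theta(s_m,s_n)$ is a $\gamma\sqrt{m/8}$-separated set in the spectral norm. Exactly as in the proof of Theorem~\ref{thm:lowerbound},
\begin{equation*}
KL\bigl(\mathbb P_{\theta(w)},\mathbb P_{\theta(w')}\bigr)\;\le\;\frac{p}{2\sigma^2}\,\|\theta(w)-\theta(w')\|_2^2\;\le\;\frac{p}{2\sigma^2}\,\gamma^2 m\;=\;\frac{c^2}{2}\,m ,
\end{equation*}
so that, for $c$ small enough (and $m$ larger than an absolute constant; for smaller $m$ a two-point argument gives the same conclusion), Fano's inequality with nearest-neighbour decoding, in the form used in the proof of Theorem~\ref{thm:lowerbound}, yields
\begin{equation*}
\inf_{\hat\vartheta}\;\max_{w\in\mathcal W}\;\mathbb P_{\theta(w)}\Bigl\{\|\hat\vartheta-\theta(w)\|^2\ \ge\ \tfrac{1}{32}\,\gamma^2 m\Bigr\}\;\ge\;0.1 .
\end{equation*}
Since $\tfrac{1}{32}\gamma^2 m=\tfrac{c^2}{32}\,\sigma^2 m/p$ and $\{\theta(w)\}\subseteq\Theta(s_m,s_n)$, this is the first display of the corollary with $n\vee m$ replaced by $m$; the symmetric construction using a single nonzero column gives the same bound with $m$ replaced by $n$. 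Taking the maximum yields the first inequality of the corollary with $C'=c^2/32$, an absolute constant, and the expectation bound \eqref{eq:expectspeclower} follows by Markov's inequality.

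\emph{Main difficulty.} The argument is, by design, just the Fano reduction behind Theorem~\ref{thm:lowerbound} restricted to a rank-one sub-family, so the only genuinely new point is the (routine but case-dependent) verification that the one-sparse-row and one-sparse-column matrices lie in $\Theta(s_m,s_n)$ for all admissible parameters, including the degenerate regimes in which one of the factors $X$ or $Z$ is pinned to the identity; this is precisely where the hypothesis $\{0,1\}\subseteq\mathcal D_n\cap\mathcal D_m$ is used.
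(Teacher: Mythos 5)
Your proposal is correct, but it takes a genuinely different route from the paper. The paper does not build a new packing at all: it introduces the subclass $\Gamma_2$ of matrices $XBZ^T$ with $k_n=k_m=2$ (hence of rank at most $2$), notes $\Gamma_2\subseteq\Theta(s_n,s_m)$ by zero-padding, and transfers its already-proved Frobenius lower bound (Theorem \ref{thm:lowerbound} applied with $k_n=k_m=2$, which gives the order $\sigma^2(n+m)/p$) to the spectral norm by a deterministic projection argument: replace an arbitrary estimator $\hat\vartheta$ by its best rank-$2$ approximation $\hat\vartheta_2$, so that $\theta-\hat\vartheta_2$ has rank at most $4$ and $\|\theta-\hat\vartheta_2\|_2^2\le 4\|\theta-\hat\vartheta_2\|^2\le 16\|\theta-\hat\vartheta\|^2$, which converts the spectral risk over $\Theta(s_m,s_n)$ into a Frobenius risk over $\Gamma_2$. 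You instead construct an explicit rank-one sub-family (a single nonzero row, respectively column, equal to $\gamma w$ with $w\in\{0,1\}^m$), on which spectral and Frobenius norms of differences coincide exactly, and rerun the Varshamov--Gilbert plus Fano (Tsybakov Theorem 2.5) argument with the spectral norm as the semi-distance; this is sound, since that machinery applies to any semi-distance and your KL bound and separation computations are right, as is your verification of membership in $\Theta(s_m,s_n)$ including the degenerate cases $s_n=0$ or $s_m=0$ (the small-$m$ two-point fallback is also needed and you note it). What each approach buys: the paper's argument uses Theorem \ref{thm:lowerbound} as a black box and requires no new packing or testing step, only the low-rank norm-comparison trick; yours is self-contained and conceptually transparent (exact norm equality rather than a factor-$4$ comparison), at the cost of redoing the reduction-to-testing and the membership checks. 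Both deliver the stated order $\sigma^2(n\vee m)/p$ with absolute constants, and in both cases the expectation bound follows by Markov's inequality.
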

The proof of this corollary  is given in Section \ref{sec:proofspeclower}.

To get matching upper bounds we can use the soft thresholding estimator introduced in \cite{Koltchinskii_Lounici_Tsybakov} or the hard thresholding estimator proposed in \cite{Klopp_rank}. These papers deal with the completion problem for low rank matrices in the context  of trace regression model, which is a slightly different setting. 

Here, we consider the hard thresholding estimator. Set 
\begin{equation*}
Y'=Y/p.
\end{equation*} 
The singular value decomposition of matrix $Y'$ has the form
\begin{equation}
Y'=\overset{\rank(Y')}{\underset{j=1}{\Sigma}}\sigma_j(Y')u_j(Y')v_j(Y')^{T},
\end{equation}
where $\rank(Y')$ is the rank of $Y'$,
 $\sigma_j(Y')$ are the singular values of $Y'$ indexed in the decreasing order, and 
$u_j(Y')$ (respectively, $v_j(Y')$) are the left (respectively, the right) singular vectors of $Y'$.
The hard thresholding estimator is defined by the formula
\begin{equation}\label{hard_estimator}
\tilde \theta=\underset{j:\sigma_j(Y')\geq \lambda}{\Sigma}\sigma_j(Y')u_j(Y')v_j(Y')^{T}
\end{equation}
where $\lambda>0$ is the regularization parameter.  In this section, we assume that the noise variables $W_{ij}$ are  bounded as stated in the next assumption.
 \begin{Assumption}\label{noise_bounded} For all $i,j$ we have
$\bE(W_{ij})=0$, $\bE(W_{ij}^{2})=\sigma^{2}$ and there exists a positive constant  $b>0$ such that
\begin{equation*} 
 \underset{i,j}{\max}\left \vert W_{ij}\right \vert\leq b.
 \end{equation*}
 \end{Assumption} 
A more general case of sub-Gaussian noise can be treated as well; in this case, we can work on the event $\mathcal{E}_{b}$ where $\Vert W\Vert_{\infty}$ is bounded by a suitable constant $b$ and show that the probability of the complement of $\mathcal{E}_{b}$ is small.  

The following theorem  gives the upper bound on the estimation error of the hard thresholding estimator \eqref{hard_estimator}.
 \begin{Theorem}\label{corollary_spectral}
 Assume that $\Vert \theta^{*}\Vert_{\infty}\leq \Q$ and let Assumption \ref{noise_bounded} hold. Let $\lambda=c(b+\Q)\sqrt{\frac{n\vee m}{p}}$ where $c>0$ is a sufficiently large absolute constant. Assume that $p\geq \log(n+m)/(n\vee m)$. Then,  with ${\mathbb P}_{\theta^*}$probability at least $1-2/(n+m)$, the hard thresholding estimator $\tilde \theta$  satisfies
 \begin{equation*}
  \Vert \tilde \theta-\theta^{*}\Vert^{2}\leq C(b+\Q)^{2}\dfrac{n\vee m}{p}
 \end{equation*}
 where $C>0$ is an absolute constant. 
 \end{Theorem}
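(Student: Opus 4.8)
The plan is to control $\|\tilde\theta - \theta^*\|$ by a standard argument for hard-thresholded SVD estimators, reducing everything to a high-probability bound on the spectral norm $\|W\|$ of the noise matrix in model \eqref{model}. The key algebraic fact is that, for the hard thresholding estimator $\tilde\theta$ defined in \eqref{hard_estimator} with threshold $\lambda$, one has the deterministic inequality $\|\tilde\theta - \theta^*\| \le C\lambda$ provided $\lambda \ge 2\|Y'-\theta^*\| = 2\|W\|$ and $\theta^*$ has rank at most something like $(n\vee m)$; more precisely, thresholding at level $\lambda$ on $Y' = \theta^* + W$ yields an estimator of rank $\le \mathrm{rank}(\theta^*) + (\text{number of large singular values of }W)$ and error $\|\tilde\theta-\theta^*\|\lesssim \|W\|$. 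In our setting $\theta^*$ is an arbitrary $n\times m$ matrix with $\|\theta^*\|_\infty\le\Q$, so $\mathrm{rank}(\theta^*)\le n\wedge m$, and rank is not the issue — the issue is purely the size of $\|W\|$. Hence the whole theorem rests on showing
\begin{equation*}
\|W\| \le \frac{c}{2}(b+\Q)\sqrt{\frac{n\vee m}{p}}
\end{equation*}
with probability at least $1 - 2/(n+m)$, after which the deterministic bound closes the argument with $C$ a suitable multiple of $c^2$.

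First I would invoke Assumption \ref{noise_bounded}: the entries $W_{ij} = \theta^*_{ij}(E_{ij}-p)/p + \xi_{ij}E_{ij}/p$ are independent, mean zero, with $|W_{ij}|\le b$ and $\E W_{ij}^2 = \sigma^2$. (Note $\sigma^2 \le C(b+\Q)^2/p$ under the model: the variance of the first term is $\Q^2(1-p)/p$ and that of $\xi_{ij}E_{ij}/p$ is of order $1/p$ times the noise variance, which is itself $O(b^2)$.) Then I would apply a matrix Bernstein / non-commutative Bernstein inequality for sums of independent bounded mean-zero matrices — e.g. the bound of the form $\Prob(\|W\| \ge t) \le (n+m)\exp\!\left(-\frac{t^2/2}{v + Rt/3}\right)$ where $v = \max\{\|\E WW^T\|, \|\E W^TW\|\}$ bounds the "variance proxy" and $R$ bounds $\|W_{ij}e_ie_j^T\|$. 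Here $v \le (n\vee m)\sigma^2 \le C(n\vee m)(b+\Q)^2/p$ and $R = \max_{ij}|W_{ij}| \le b$. Plugging $t = \frac{c}{2}(b+\Q)\sqrt{(n\vee m)/p}$ and using the hypothesis $p \ge \log(n+m)/(n\vee m)$ — which guarantees $(n\vee m)/p \ge \log(n+m)$ so that the Gaussian term dominates the Bernstein term — one checks that for $c$ large enough the exponent beats $\log(n+m) + \log 2$, giving the claimed probability $1-2/(n+m)$.

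With $\|W\| \le \lambda/2$ on this event, I would then run the deterministic SVD-perturbation step: Weyl's inequality gives $|\sigma_j(Y') - \sigma_j(\theta^*)| \le \|W\|$ for every $j$, so all singular values of $Y'$ exceeding $\lambda$ correspond to indices $j$ with $\sigma_j(\theta^*) \ge \lambda - \|W\| \ge \lambda/2$, and conversely indices discarded have $\sigma_j(\theta^*)\le \lambda + \|W\| \le 2\lambda$ (up to the relevant count). Writing $\tilde\theta - \theta^* = (\tilde\theta - Y') + W$ and bounding $\|\tilde\theta - Y'\|$ by the largest discarded singular value $\le \lambda$ (by definition of hard thresholding and the fact that $\tilde\theta$ keeps the top singular directions), one gets $\|\tilde\theta - \theta^*\| \le \lambda + \|W\| \le \frac{3}{2}\lambda$, hence $\|\tilde\theta-\theta^*\|^2 \le \frac94\lambda^2 = \frac94 c^2(b+\Q)^2\frac{n\vee m}{p} =: C(b+\Q)^2\frac{n\vee m}{p}$. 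This is the stated bound.

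The main obstacle is the first step — getting the sharp high-probability spectral-norm bound on $W$ with the correct $\sqrt{(n\vee m)/p}$ scaling and a clean $1/(n+m)$ failure probability. The subtlety is that $W$ has heterogeneous entries (the Bernoulli-thinning term scales like $1/p$ while the sub-Gaussian term also scales like $1/p$), so one must be careful that the variance proxy $v$ in matrix Bernstein is indeed $O((n\vee m)(b+\Q)^2/p)$ and not larger, and that the condition $p \ge \log(n+m)/(n\vee m)$ is exactly what is needed to make the sub-Gaussian regime of Bernstein kick in. If one prefers to stay with Assumption \ref{assumption_noise} rather than the bounded Assumption \ref{noise_bounded}, this is where the truncation argument alluded to after Assumption \ref{noise_bounded} enters: work on the event $\mathcal{E}_b = \{\|W\|_\infty \le b\}$ with $b \asymp (\Q + \sigma\sqrt{\log(n+m)})$, bound $\Prob(\mathcal{E}_b^c)$ by a union bound and a sub-Gaussian tail, and apply the above to the truncated (and re-centered) matrix, absorbing the negligible re-centering bias. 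The deterministic SVD step is routine and I expect no difficulty there beyond bookkeeping the constants.
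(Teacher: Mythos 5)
Your deterministic step is essentially the paper's (the paper simply observes that $\lambda\ge\Vert W\Vert$ implies $\Vert\tilde\theta-\theta^*\Vert\le 2\lambda$), so the whole theorem rests, as you correctly say, on showing $\Vert W\Vert\le \tfrac{c}{2}(b+\Q)\sqrt{(n\vee m)/p}$ with probability $1-2/(n+m)$ for an \emph{absolute} constant $c$. This is exactly where your argument has a genuine gap: matrix Bernstein cannot deliver that bound. With your variance proxy $v\asymp (n\vee m)(b+\Q)^2/p$, the Bernstein tail $(n+m)\exp\bigl(-\tfrac{t^2/2}{v+Rt/3}\bigr)$ evaluated at $t=\tfrac{c}{2}(b+\Q)\sqrt{(n\vee m)/p}$ has exponent of order $t^2/v\asymp c^2$, which does not grow with $n+m$; to beat the dimensional prefactor $n+m$ and reach failure probability $2/(n+m)$ you would need $c^2\gtrsim\log(n+m)$, so $c$ cannot be absolute. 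Equivalently, matrix Bernstein only gives $\Vert W\Vert\lesssim\sqrt{v\log(n+m)}+R\log(n+m)$, whose leading term is $(b+\Q)\sqrt{(n\vee m)\log(n+m)/p}$, a factor $\sqrt{\log(n+m)}$ larger than the target. The hypothesis $p\ge\log(n+m)/(n\vee m)$ does not repair this: being in the ``Gaussian regime'' of Bernstein does not remove the $\log(n+m)$ multiplying the variance proxy; in the paper that hypothesis is used only to absorb the term proportional to the entrywise bound, not the leading variance term.

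What the paper does instead is to write $pW=\Sigma_1+\Sigma_2$ with $(\Sigma_1)_{ij}=\xi_{ij}E_{ij}$ and $(\Sigma_2)_{ij}=\theta^*_{ij}(E_{ij}-p)$, and to apply to each piece the sharp bound of Bandeira and van Handel (Proposition \ref{pr1}), which yields $\Vert A\Vert\le(1+\epsilon)2\sqrt{2}(\sigma_1\vee\sigma_2)+t$ with failure probability $(n\wedge m)\exp(-t^2/(c_\epsilon\sigma_*^2))$: crucially, the leading term $\sigma_1\vee\sigma_2\lesssim(b+\Q)\sqrt{(n\vee m)p}$ carries no logarithm, and the logarithm enters only through $t\asymp(b+\Q)\sqrt{\log(n+m)}$, which the assumption $p\ge\log(n+m)/(n\vee m)$ makes dominated by the leading term after dividing by $p$. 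To close your proof you must replace matrix Bernstein by a bound of this type (or by a comparable Seginer-type result); as written, your route proves only the theorem with an extra $\log(n+m)$ factor, or with $c$ growing like $\sqrt{\log(n+m)}$. A minor further point: under the model, $\max_{ij}|W_{ij}|$ is of order $(b+\Q)/p$ rather than $b$ (the paper uses the bound $b$ for the entries $\xi_{ij}E_{ij}$ of $\Sigma_1$, i.e.\ before dividing by $p$); this does not change the diagnosis above, since the variance term is already too large, but it should be stated consistently in any repaired argument.
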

 The proof of Theorem \ref{corollary_spectral} is close to the argument in \cite{Klopp_rank}. It is given in Section~\ref{proof_corollary_spectral}.

\section{A general oracle inequality under incomplete observations}
The aim of this section is to present a general theorem about the behavior of least squares estimators in the setting with incomplete observations. This theorem will be applied in the next section to obtain an analog of the upper bound of Theorem~\ref{thm:upperfinite} for general alphabets.  To state the theorem, it does not matter whether we consider a vector or matrix setting.
Therefore, 
in this section, we will deal with the vector model. 
Assume that we observe a vector $Y=(Y_1, \dots, Y_N)$ with entries
\begin{equation} \label{eq:modelmc1}
Y_i=E_i\left (\theta_i^*+\xi_i\right ), \quad i=1,\dots,N, 
\end{equation} 
for some unknown  $\theta^*=(\theta^{*}_1, \dots, \theta^{*}_N)$. Our goal is to estimate $\theta^*$. 
Here, $\xi_i$ are independent random noise variables, and $E_i$ are i.i.d. Bernoulli variables with parameter $p\in (0,1]$ such that $(E_1, \dots, E_N)$ is independent of $(\xi_1, \dots, \xi_N)$. 

When $p$ is known we can equivalently write \eqref{eq:modelmc1} in the form 
\begin{equation}\label{model1}
Y'=\theta^{*}+W,
\end{equation}
where now $W$ is a vector with entries 
$$W_i= \theta_i^*(E_i-p)/p +\xi_{i}E_{i}/p,$$
and $Y'= Y/p$.
In this section, we denote by $\mathbb{P}_{\mtheta^*}$ the  probability distribution of $Y'$ satisfying \eqref{model1}. 

Consider the least squares estimator of  $\theta^{*}$:
\begin{equation} \label{eq:ls} 
\hat{\theta} \in \argmin_{\theta \in \Theta} \| Y' - \theta\|_2^2,
\end{equation}
where $\Theta$ is a subset of $\mathbb{R}^N$.  For some element $\theta_0$ of  $\argmin_{\theta \in \Theta} \| \theta - \theta^* \|_2^2$ we set $\Theta_1 = \left\{ \theta \in \Theta: \|\theta - \theta_0\|_2 \le 1 \right\}$.  

Set
$$
\epsilon_0 = \frac12 \Big( \inf\{\epsilon\in(0,1]: \ N\epsilon^2> \log {\mathcal N}_{\epsilon}(\Theta_1)\} + \sup\{\epsilon\in(0,1]: \ N\epsilon^2< \log {\mathcal N}_{\epsilon}(\Theta_1)\}\Big).
$$
Since $\mathcal N_{\epsilon}(\Theta_1)$ is a decreasing left-continuous function of $\epsilon\in(0,1]$, we have
\begin{equation} \label{eq:entropycond} 
\frac12 \log {\mathcal N}_{\epsilon_0}(\Theta_1)\leq N \epsilon_0^2 \leq  \log {\mathcal N}_{\epsilon_0}(\Theta_1).
\end{equation}



\begin{Theorem} \label{thm:oracleupper}
Let $\xi_i$ be independent random variables satisfying $\mathbb{E} e^{\lambda \xi_i} \le e^{\lambda^2 \sigma^2/2}$ for some $\sigma>0$ and all $\lambda\in \mathbb{R}$.
Assume that there exists a constant $\Q$ such that $\| \theta \|_\infty \le \Q$ for all $\theta \in \Theta$.
 Then, for any $\theta^*\in \mathbb{R}^N$, with $\mathbb{P}_{\theta^*}$-probability at least $1-4/{\mathcal N}_{\epsilon_0}(\Theta_1) - \exp(-pN/6)$,  the least squares estimator (\ref{eq:ls}) satisfies the oracle inequality 
$$ \|\hat{\theta} - \theta^* \|_2^2 \le 3\inf_{\theta \in \Theta}\|\theta-\theta^*\|_2^2 +C \frac{\Q^2+\sigma^2}{p} N \epsilon_0^2,
$$ 
where $C>0$ is an absolute constant.
\end{Theorem}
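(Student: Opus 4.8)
\emph{Proof strategy.} The plan is to follow the classical route for oracle inequalities of least--squares estimators --- a basic inequality, followed by a uniform ``peeling $+$ chaining'' control of the linear process $\theta\mapsto\langle W,\theta-\theta_0\rangle$ --- the two non-standard features being that the masking noise $W$ is heteroscedastic and only sub-exponential (not sub-Gaussian), and that the complexity of $\Theta$ is measured through the localized set $\Theta_1$. First I would fix $\theta_0\in\argmin_{\theta\in\Theta}\|\theta-\theta^*\|_2^2$. Since $\hat\theta$ minimizes $\|Y'-\theta\|_2^2$ over $\Theta$ (see \eqref{eq:ls}) and $Y'=\theta^*+W$, expanding $\|Y'-\hat\theta\|_2^2\le\|Y'-\theta_0\|_2^2$ gives the basic inequality $\|\hat\theta-\theta^*\|_2^2\le\|\theta_0-\theta^*\|_2^2+2\langle W,\hat\theta-\theta_0\rangle$. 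It therefore suffices to prove that, on an event of the stated probability,
\[
2\,\bigl|\langle W,\theta-\theta_0\rangle\bigr|\ \le\ \tfrac14\,\|\theta-\theta_0\|_2^2+C\,\frac{\Q^2+\sigma^2}{p}\,N\epsilon_0^2\qquad\text{for all }\theta\in\Theta,
\]
since substituting $\theta=\hat\theta$, inserting $\|\hat\theta-\theta_0\|_2^2\le 2\|\hat\theta-\theta^*\|_2^2+2\|\theta_0-\theta^*\|_2^2$, and rearranging yields exactly $\|\hat\theta-\theta^*\|_2^2\le 3\|\theta_0-\theta^*\|_2^2+2C\,\frac{\Q^2+\sigma^2}{p}N\epsilon_0^2$. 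So the whole task is a uniform-in-$\theta$ deviation bound for this multiplier process.

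To obtain it I would split $W=W^{(1)}+W^{(2)}$ with $W^{(1)}_i=\theta_i^*(E_i-p)/p$ and $W^{(2)}_i=\xi_iE_i/p$ and condition on the mask $E=(E_1,\dots,E_N)$. The entries of $W^{(1)}$ are bounded by $\Q/p$ with variance at most $\Q^2/p$, so for a fixed $v=\theta-\theta_0$ (note $\|v\|_\infty\le 2\Q$) Bernstein's inequality controls $\langle W^{(1)},v\rangle$ in terms of $\|v\|_2,\|v\|_\infty$ and a confidence level; conditionally on $E$, $\langle W^{(2)},v\rangle$ is sub-Gaussian with variance proxy $\sigma^2\sum_iE_iv_i^2/p^2$, and a second Bernstein bound for the sum $\sum_iE_iv_i^2$ of independent terms bounded by $4\Q^2$ shows $\sum_iE_iv_i^2\le 2p\|v\|_2^2$ once $\|v\|_2^2\gtrsim \Q^2N\epsilon_0^2/p$ --- this is what replaces the naive $1/p^2$ by the correct $1/p$. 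For smaller $v$ I would use instead the deterministic bound $\sum_iE_iv_i^2\le 4\Q^2\sum_iE_i$ on the event $\mathcal A=\{\sum_iE_i\le 2pN\}$, which by Chernoff's inequality has probability at least $1-e^{-pN/3}$ (up to the numerical constant this is the origin of the $\exp(-pN/6)$ term). Combining the two pieces, for a single $v$ with $\|v\|_2=\tau$ one obtains, with probability $1-\alpha$ on $\mathcal A$, a bound of order $\sqrt{(\Q^2+\sigma^2)\tau^2\log(1/\alpha)/p}+(\Q^2+\sigma^2)\log(1/\alpha)/p$.

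I would then run a peeling argument in $\|\theta-\theta_0\|_2$. Put $t_0^2\asymp \Q^2N\epsilon_0^2/p$. For $\|\theta-\theta_0\|_2\le t_0$ the right-hand side of the target display already dominates, once $\sup\{|\langle W,\theta-\theta_0\rangle|:\theta\in\Theta,\ \|\theta-\theta_0\|_2\le t_0\}$ is bounded by covering this ball at a scale of order $\epsilon_0 t_0$ and taking $\log(1/\alpha)$ of order $\log\mathcal N_{\epsilon_0}(\Theta_1)\asymp N\epsilon_0^2$ (the equivalence being \eqref{eq:entropycond}). For $\|\theta-\theta_0\|_2>t_0$ I cut into dyadic shells $2^{j-1}t_0<\|\theta-\theta_0\|_2\le 2^{j}t_0$, $j\ge 1$, cover $\Theta\cap\{\|\cdot-\theta_0\|_2\le 2^{j}t_0\}$ by a net of mesh proportional to $\epsilon_0 2^{j}t_0$ and of log-cardinality of order $\log\mathcal N_{\epsilon_0}(\Theta_1)\asymp N\epsilon_0^2$, apply the pointwise estimate of the previous step with a union bound over the net points and over the chaining remainders, and conclude that uniformly on the $j$-th shell
\[
2\,\bigl|\langle W,\theta-\theta_0\rangle\bigr|\ \lesssim\ \sqrt{\frac{\Q^2+\sigma^2}{p}}\;2^{j}t_0\;\sqrt N\,\epsilon_0\ +\ \frac{\Q^2+\sigma^2}{p}\,N\epsilon_0^2 .
\]
Since $2^{j}t_0\le 2\|\theta-\theta_0\|_2$ on that shell, the AM--GM inequality turns the right side into $\tfrac14\|\theta-\theta_0\|_2^2+C(\Q^2+\sigma^2)N\epsilon_0^2/p$, which is the target display. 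Summing the failure probabilities over the (only $O(\log N)$ many, thanks to $\|v\|_\infty\le 2\Q$) shells and over the net points gives the announced $4/\mathcal N_{\epsilon_0}(\Theta_1)+\exp(-pN/6)$.

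\emph{Main obstacle.} The delicate point is the peeling/chaining step: one must bound the metric entropy of the localized balls $\Theta\cap\{\|\cdot-\theta_0\|_2\le r\}$ at the relevant mesh in terms of the single number $\log\mathcal N_{\epsilon_0}(\Theta_1)$, keep the chaining remainder negligible relative to the main net term, and simultaneously invoke the noise concentration of the second step in each shell with a radius $\tau$ for which its hypotheses hold --- which forces a switch between the sharp $\tau$-dependent Bernstein bound and the crude deterministic bound on $\mathcal A$ precisely around $\tau^2\asymp\Q^2N\epsilon_0^2/p$. This is where the two structural assumptions enter essentially: $\|\theta\|_\infty\le\Q$ controls $\|\theta-\theta_0\|_\infty$, hence all the Bernstein ``ranges'', and caps the $\ell_2$-diameter of $\Theta$ at $2\Q\sqrt N$ so that only finitely many shells occur; and the mask event $\mathcal A$ supplies the correct power of $p$.
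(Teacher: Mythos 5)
Your strategy is essentially the paper's: the same basic least--squares inequality, peeling over dyadic shells in $\|\theta-\theta_0\|_2$, a single-scale net on each shell with a union bound, and a Bernstein-type deviation bound for $\langle W,v\rangle$ with variance proxy $(\Q^2+\sigma^2)\|v\|_2^2/p$ (the paper obtains this by bounding the moment generating function of $W_i$ directly in Lemma \ref{lm:bernstein}, rather than by conditioning on the mask and applying Bernstein twice as you propose; that difference is cosmetic). The one place where your outline would not close as written is the net-approximation remainder, which you leave to an unspecified union bound over ``chaining remainders''. Since entropy is available only at the single scale $\epsilon_0$ (via $\Theta_1$), the paper handles the remainder by Cauchy--Schwarz, $\langle W,\theta-\bar\theta\rangle\le\|W\|_2\,\|\theta-\bar\theta\|_2$, on the event $\{\|W\|_2\le\sqrt{N/(2\alpha)}\}$ with $\alpha=p/(6(\Q^2+\sigma^2))$; it is the Bernstein bound for $\sum_i W_i^2$ (the second inequality of Lemma \ref{lm:bernstein}) that yields the $\exp(-pN/6)$ term in the statement. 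Your proposed event $\{\sum_i E_i\le 2pN\}$ does not by itself control $\|W\|_2$, because $W_i$ contains the unbounded noise contribution $\xi_iE_i/p$: you would additionally need concentration of $\sum_i E_i\xi_i^2$ (which does follow from the sub-Gaussian assumption, exactly as in Lemma \ref{lm:bernstein}), so the attribution of $\exp(-pN/6)$ to a Chernoff bound on the mask count is not quite right and the remainder step needs this extra ingredient. Beyond that, both your argument and the paper's share the same implicit step of bounding the covering numbers of the dilated localized sets $\Theta_D$ at mesh proportional to $D\epsilon_0$ by (a constant multiple of) $\log \mathcal{N}_{\epsilon_0}(\Theta_1)\asymp N\epsilon_0^2$, so this is not a point of divergence.
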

The proof of this theorem is given in Section \ref{proof_thm:oracleupper}.

Note that Theorem \ref{thm:oracleupper} has no assumption on the true signal $\theta^*$. Using Theorem \ref{thm:oracleupper} with $\theta^*\in \Theta$ we immediately deduce that 
$$\inf_{\theta\in\Theta}\mathbb{P}_{\theta} \left(\|\hat{\theta} - \theta\|_2^2 \le C \frac{\Q^2+\sigma^2}{p} N \epsilon_0^2\right)
\ge 1-4/\mathcal N_{\epsilon_0}(\Theta_1) - \exp(-pN/6),
$$ 
where $C>0$ is an absolute constant.

Theorem \ref{thm:oracleupper} shows that the rate of convergence of the least squares estimator is determined by the value  of $\epsilon_0$ satisfying the global entropy condition (\ref{eq:entropycond}). This quantity is the critical covering radius that appeared in the literature in different contexts, see, e.g., \cite{yang1999information}. In particular, this critical radius has been shown to determine the minimax optimal rates in nonparametric estimation problems. However, it may lead to slightly suboptimal rates (with deterioration by a logarithmic factor) for parametric estimation problems.

\section{Structured matrix completion with general alphabets}
For the structured matrix completion over infinite alphabets  we consider the following parameter spaces:
\begin{align*}
\widetilde\Theta(s_n,s_m)&=\{\theta=XBZ^T: X\in \widetilde{\mathcal{A}}_{n},\Vert B\Vert_{\infty}\le B_{max}, Z\in 
\widetilde{\mathcal{A}}_{m},  
\Vert \theta\Vert_{\infty}\leq \Q
\}.
 \end{align*}  
Here, $B_{max}$ and $\Q$ are positive constants, and for $1\le s_n \le k_n$,
 \begin{align*}
        \widetilde{\mathcal{A}}_{n}=\{A\in \mathcal{D}_{n}^{n\times k_n}: \ \Vert A_{ i\cdot}\Vert_{0}\leq s_n,\;\text{for all}\; i\in[n]\;\text{and}\;\Vert A\Vert_{\infty}\leq 1\}.
\end{align*}
If $s_n=0$, we assume that $n=k_n$ and we define $\widetilde{\mathcal{A}}_{s_n}$ as the set containing only one element, which is the $n\times n$ identity matrix.

 The difference from the class $\Theta(s_n,s_m)$ is only in the fact that the elements of matrix $\theta\in \widetilde\Theta(s_n,s_m)$ and those of the corresponding factor matrices $X,B,Z$ are assumed to be uniformly bounded. This assumption is natural in many situations, for example, in the Stochastic Block Model or in recommendation systems, where the entries of the matrix are ratings.  We introduce the bounds of the entries of the factor matrices in order to fix ambiguities associated with the factorization structure. 
 
A key ingredient in applying Theorem \ref{thm:oracleupper} to this particular case  is to find the covering number $\log \mathcal{N}_\epsilon(\Theta_1)$ when $\Theta=\widetilde\Theta(s_n,s_m)$. For any $\Theta\subset \mathbb{R}^{n\times m}$, any $\theta_0\in \Theta$, and any $u>0$, set
$$\Theta_u = \left\{ \theta \in \Theta: \|\theta - \theta_0\|_2 \le u \right\}.$$
 The following result is proved in Section \ref{proof_prop:coveringbound}.
\begin{proposition} \label{prop:coveringbound}
For any $\theta_0\in \widetilde\Theta(s_n,s_m)$, $0<\epsilon<1$, and $u\le 1$ we have
$$\log \mathcal{N}_\epsilon\left ( \widetilde{\Theta}_u(s_n,s_m) \right ) \le R_1(\epsilon) \wedge R_2(\epsilon) \wedge R_3(\epsilon) \wedge R_4(\epsilon),$$
where
\begin{eqnarray*}
R_1(\epsilon) &=& ns_n \log \frac{ek_n}{s_n}+ m s_m \log \frac{ek_m}{s_m} + (ns_n+ms_m) \log \frac{6B_{max} \sqrt{mn} s_m s_n}{\epsilon} +  r_n r_m \log \frac{9u}{\epsilon},\\
R_2(\epsilon) &=& nr_m \log \frac{6u}{\epsilon} + ms_m \log \frac{ek_m}{s_m} + ms_m \log \frac{2B_{max}\sqrt{mn}s_ms_n}{\epsilon}, \\
R_3(\epsilon) &=& mr_n \log \frac{6u}{\epsilon} + ns_n \log \frac{ek_n}{s_n} + ns_n \log \frac{2B_{max}\sqrt{mn}s_ms_n}{\epsilon}, \\
R_4(\epsilon) &=& m n \log \frac{3u}{\epsilon}.
\end{eqnarray*}
\end{proposition}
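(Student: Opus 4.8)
The plan is to construct an $\epsilon$-net of $\widetilde\Theta_u(s_n,s_m)$ by separately covering the factor matrices $X$, $B$, $Z$ (for $R_1$), or by covering a product of two of them together with the third (for $R_2$, $R_3$), or by covering $\theta$ directly as an element of an $u$-ball in $\R^{n\times m}$ (for $R_4$). Each bound $R_i$ is obtained by a different choice of which pieces to net; taking the minimum then yields the claim. The overarching estimate I will use repeatedly is that if $\theta=XBZ^T$ and $\theta'=X'B'Z'^T$ then
$$
\|\theta-\theta'\|_2 \le \|(X-X')BZ^T\|_2 + \|X'(B-B')Z^T\|_2 + \|X'B'(Z-Z')^T\|_2,
$$
and each term is bounded by a product of a spectral/Frobenius norm of the perturbed factor and operator norms (or $\ell_\infty$ bounds times $\sqrt{\text{size}}$) of the unperturbed factors, using $\|A\|\le \|A\|_2$ and the entrywise bounds $\|X\|_\infty,\|Z\|_\infty\le 1$, $\|B\|_\infty\le B_{max}$, together with the row-sparsity $s_n,s_m$ which controls the number of nonzero columns/entries involved.

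First I would handle $R_4$, which is immediate: $\widetilde\Theta_u(s_n,s_m)$ is contained in a Frobenius ball of radius $u$ in $\R^{nm}$, whose $\epsilon$-covering number is at most $(3u/\epsilon)^{nm}$ by the standard volumetric bound, giving $R_4(\epsilon)=mn\log(3u/\epsilon)$. Next, for $R_1$, I would (i) enumerate the sparsity patterns of $X$: there are at most $\binom{k_n}{s_n}^n \le (ek_n/s_n)^{ns_n}$ of them, contributing $ns_n\log(ek_n/s_n)$, and similarly for $Z$; (ii) on each fixed support, net the nonzero entries of $X$ and $Z$ (which lie in $[-1,1]$) and of $B$ (which lies in $[-B_{max},B_{max}]^{k_n\times k_m}$) at a scale fine enough — of order $\epsilon/(B_{max}\sqrt{mn}s_ms_n)$ — so that the induced perturbation in $\theta$ is at most, say, $\epsilon/3$; counting these gives the $(ns_n+ms_m)\log(6B_{max}\sqrt{mn}s_ms_n/\epsilon)$ term. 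For the $B$-piece, rather than a crude entrywise net of all $k_nk_m$ entries, I net the image $X'BZ'^T$ over the $r_nr_m$-dimensional subspace (its rank is at most $r_n\wedge r_m$ in each factor, so effectively $r_nr_m$ free parameters after fixing $X',Z'$ of full column rank on the chosen support), inside a ball of radius $O(u)$, producing $r_nr_m\log(9u/\epsilon)$; some care is needed because $X'$ may not have full column rank, in which case we reparametrize by the effective dimension, but this only decreases the count. The bounds $R_2$ and $R_3$ are obtained the same way but asymmetrically: for $R_2$, net the product $XB$ directly as an $n\times k_m$ matrix lying in a Frobenius ball of radius $O(u)$, which has effective dimension $nr_m$ (since $XB$ has rank at most $r_m$), giving $nr_m\log(6u/\epsilon)$, then separately net the sparsity pattern and nonzero entries of $Z$, giving the remaining two terms; $R_3$ is the mirror image with the roles of $(X,n)$ and $(Z,m)$ swapped.

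The main obstacle, and the step requiring the most care, is the bookkeeping of net scales: one must verify that with the stated resolutions each of the three (or two) perturbation terms in the triangle inequality is at most $\epsilon/3$ (or $\epsilon/2$), which forces the factor $B_{max}\sqrt{mn}\,s_ms_n$ inside the logarithms — the $\sqrt{mn}$ comes from converting entrywise perturbations to Frobenius norm, the $s_ms_n$ from the number of nonzero terms contributing to each entry of $XBZ^T$, and $B_{max}$ from the size of the middle factor. A secondary subtlety is justifying that the "effective dimension" of the set $\{X'BZ'^T : \|B\|_\infty\le B_{max}\}\cap\{\|\cdot\|_2\le O(u)\}$ is at most $r_nr_m$ (resp. that $\{XB\}$ has effective dimension at most $nr_m$): this follows because $B\in\R^{k_n\times k_m}$ maps into a space of dimension at most $r_n r_m$ after composing with the fixed $X'$ (rank $\le r_n$) and $Z'$ (rank $\le r_m$), and one applies the volumetric covering bound in that subspace after observing the image lies in a Euclidean ball of radius $\|\theta-\theta_0\|_2 + \|\theta_0'-\theta_0\|_2 = O(u)$ once $X',Z'$ are pinned down to their net points. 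Assembling the four counts and taking logarithms gives $R_1\wedge R_2\wedge R_3\wedge R_4$ as claimed.
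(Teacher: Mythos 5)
Your proposal is correct and follows essentially the same route as the paper's proof: the same triangle-inequality decomposition with $\ell_\infty$-nets over the enumerated supports of $X$ and $Z$ (the paper's Lemmas \ref{lm:coveringZ} and \ref{lm:matrixbound}), a volumetric covering of the image $X_0BZ_0^T$ inside an at most $r_nr_m$-dimensional subspace intersected with a radius-$O(u)$ ball (Lemma \ref{lm:coveringB}), and the containment relaxations grouping $XB$, $BZ^T$, or the whole matrix to get $R_2$, $R_3$, $R_4$. The only cosmetic difference is that the paper phrases the $R_2$/$R_3$ steps via the enlarged classes $\Theta_u^{1},\Theta_u^{2},\Theta_u^{3}$ rather than "netting $XB$ directly," which is exactly the precise form of your effective-dimension remark.
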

Theorem \ref{thm:oracleupper}, together with Proposition \ref{prop:coveringbound}, imply  implies the following upper bound on the estimation error in  structured matrix completion.
\begin{Corollary}\label{cor_gneral_alph}
Consider model~\eqref{eq:modelmc}. Let Assumption 1 hold. Then, for any $\theta^*\in \mathbb{R}^{n\times m}$, the least squares estimator (\ref{eq:lsestimator}) with $\Theta=\widetilde\Theta(s_n,s_m)$ satisfies the  inequality
$$  \|\hat{\theta} - \theta^* \|_2^2 \le 3\inf_{\theta \in \widetilde \Theta(s_n,s_m)}\|\theta-\theta^*\|_2^2 + C \frac{\Q^2+\sigma^2}{p} \left(R_1(\epsilon_0) \wedge R_2(\epsilon_0) \wedge R_3(\epsilon_0) \wedge R_4(\epsilon_0) \right) $$
with $\mathbb{P}_{\theta^*}$-probability at least $$1 - \exp\left(-c(R_1(\epsilon_0) \wedge R_2(\epsilon_0) \wedge R_3(\epsilon_0) \wedge R_4(\epsilon_0))\right) - \exp(-pmn/18),$$ where, $C,c>0$ are absolute constants.
\end{Corollary}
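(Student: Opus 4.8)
The plan is to obtain the corollary by specializing the vector‑model oracle inequality of Theorem~\ref{thm:oracleupper} to the matrix completion setting and then feeding in the covering‑number estimate of Proposition~\ref{prop:coveringbound}. First I would identify model~\eqref{eq:modelmc} with the vector model~\eqref{eq:modelmc1}: vectorizing $\mathbb{R}^{n\times m}$ into $\mathbb{R}^N$ with $N=mn$ turns \eqref{eq:modelmc} into \eqref{eq:modelmc1}, sends the Frobenius norm to the Euclidean norm, preserves $\|\cdot\|_\infty$, and identifies the least squares estimator \eqref{eq:lsestimator} over $\Theta=\widetilde\Theta(s_n,s_m)$ with \eqref{eq:ls}. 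The hypotheses of Theorem~\ref{thm:oracleupper} are then in force: the $\xi_{ij}$ are sub‑Gaussian with parameter $\sigma$ by Assumption~\ref{assumption_noise}, and by the very definition of $\widetilde\Theta(s_n,s_m)$ every $\theta$ in it obeys $\|\theta\|_\infty\le\Q$, so the uniform bound holds with this $\Q$. Hence Theorem~\ref{thm:oracleupper} yields, with $\mathbb{P}_{\theta^*}$‑probability at least $1-4/\mathcal N_{\epsilon_0}(\Theta_1)-\exp(-pmn/6)$,
$$\|\hat\theta-\theta^*\|_2^2\le 3\inf_{\theta\in\widetilde\Theta(s_n,s_m)}\|\theta-\theta^*\|_2^2+C\,\frac{\Q^2+\sigma^2}{p}\,mn\,\epsilon_0^2,$$
where $\epsilon_0$ is the critical radius associated with $\Theta_1=\widetilde\Theta_1(s_n,s_m)$.

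Next I would convert $mn\,\epsilon_0^2$ into the quantity appearing in the statement. Write $\mathcal R(\epsilon):=R_1(\epsilon)\wedge R_2(\epsilon)\wedge R_3(\epsilon)\wedge R_4(\epsilon)$. Proposition~\ref{prop:coveringbound}, applied with $u=1$ and with $\theta_0$ the minimizer fixed in Theorem~\ref{thm:oracleupper}, gives at scale $\epsilon=\epsilon_0\in(0,1)$ the bound $\log\mathcal N_{\epsilon_0}(\Theta_1)\le\mathcal R(\epsilon_0)$. Combined with the right‑hand inequality of \eqref{eq:entropycond} this yields $mn\,\epsilon_0^2=N\epsilon_0^2\le\log\mathcal N_{\epsilon_0}(\Theta_1)\le\mathcal R(\epsilon_0)$, and substituting into the display above produces exactly the claimed oracle inequality, the constant $3$ on the approximation term being untouched.

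Finally, for the probability statement I would turn the factor $4/\mathcal N_{\epsilon_0}(\Theta_1)$ into an exponentially small term in $\mathcal R(\epsilon_0)$. The left‑hand inequality of \eqref{eq:entropycond} gives $\log\mathcal N_{\epsilon_0}(\Theta_1)\le 2N\epsilon_0^2$, hence $\mathcal N_{\epsilon_0}(\Theta_1)\ge\exp(N\epsilon_0^2)$; together with $\exp(-pmn/6)\le\exp(-pmn/18)$ and absorption of the harmless factor $4$, this already gives a bound of the form $1-\exp(-c\,N\epsilon_0^2)-\exp(-pmn/18)$. To present it with $\mathcal R(\epsilon_0)$ in the exponent one also needs that $N\epsilon_0^2$ and $\mathcal R(\epsilon_0)$ agree up to an absolute constant, which follows from a matching lower bound $\log\mathcal N_{\epsilon_0}(\widetilde\Theta_1(s_n,s_m))\ge c\,\mathcal R(\epsilon_0)$ asserting that the covering estimate of Proposition~\ref{prop:coveringbound} is order‑optimal at the critical resolution. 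I expect this comparison to be the main obstacle: Theorem~\ref{thm:oracleupper} delivers both an error term of order $N\epsilon_0^2$ and a failure probability governed by $\mathcal N_{\epsilon_0}(\Theta_1)$, quantities defined through the \emph{intrinsic} covering number of $\Theta_1$; the upper bound $\log\mathcal N_{\epsilon_0}(\Theta_1)\le\mathcal R(\epsilon_0)$ needed for the error inequality is immediate, but certifying the probability bound forces one to also exhibit the matching lower bound on the covering number of $\widetilde\Theta_1(s_n,s_m)$, which is where the combinatorial and volumetric bookkeeping sits (counting the support patterns of $X$ and $Z$ and packing the $B$‑block, in the spirit of the constructions behind Theorem~\ref{thm:lowerbound}).
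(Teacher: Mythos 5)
Your overall route is the same as the paper's: vectorize \eqref{eq:modelmc} into \eqref{eq:modelmc1} with $N=mn$, apply Theorem \ref{thm:oracleupper} (whose hypotheses hold as you say, since every $\theta\in\widetilde\Theta(s_n,s_m)$ satisfies $\|\theta\|_\infty\le\Q$ and Assumption \ref{assumption_noise} gives the sub-Gaussian bound), and then control $N\epsilon_0^2$ via $N\epsilon_0^2\le\log\mathcal N_{\epsilon_0}(\Theta_1)\le \mathcal R(\epsilon_0):=R_1(\epsilon_0)\wedge R_2(\epsilon_0)\wedge R_3(\epsilon_0)\wedge R_4(\epsilon_0)$, using the right-hand side of \eqref{eq:entropycond} and Proposition \ref{prop:coveringbound} with $u=1$. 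This part is correct and is exactly the paper's (implicit) argument, since the paper gives no separate proof of the corollary. One small slip: the lower bound $\mathcal N_{\epsilon_0}(\Theta_1)\ge\exp(N\epsilon_0^2)$ that you need for the probability comes from the \emph{right-hand} inequality of \eqref{eq:entropycond}; the left-hand inequality, which you invoke, bounds $\mathcal N_{\epsilon_0}(\Theta_1)$ from above and is of no use here.

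On the probability statement you have correctly isolated the only delicate point: Theorem \ref{thm:oracleupper} gives failure probability at most $4\exp(-N\epsilon_0^2)+\exp(-pmn/6)$, and since $\mathcal R(\epsilon_0)$ is only an \emph{upper} bound on $\log\mathcal N_{\epsilon_0}(\Theta_1)\ge N\epsilon_0^2$, the form $\exp\left(-c\,\mathcal R(\epsilon_0)\right)$ does not follow unless $N\epsilon_0^2\ge c\,\mathcal R(\epsilon_0)$. However, the remedy you sketch --- proving a matching packing lower bound for $\widetilde\Theta_1(s_n,s_m)$ at the critical scale --- is not the natural fix and may simply be false in regimes where the bound of Proposition \ref{prop:coveringbound} is loose (e.g.\ when the minimum among $R_1,\dots,R_4$ overestimates the metric entropy of the ball of radius $1$ around $\theta_0$). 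The cleaner repair is to rerun the chaining and union-bound argument of Lemmas \ref{lm:chaining} and \ref{lm:empirical} directly over the explicit $\epsilon$-nets constructed in the proof of Proposition \ref{prop:coveringbound}: a union bound only requires an upper bound on the cardinality of the net, so one may replace $\log\mathcal N_\epsilon$ throughout by $\mathcal R(\epsilon)$ and define the critical radius as the crossing point of $N\epsilon^2$ with $\mathcal R(\epsilon)$. This delivers the remainder term $C\frac{\Q^2+\sigma^2}{p}\mathcal R$ and the failure probability $\exp(-c\,\mathcal R)$ simultaneously, after which absorbing the factor $4$ and relaxing $pmn/6$ to $pmn/18$ is routine. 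As written, your proposal proves the oracle inequality but leaves the stated probability bound unestablished --- a gap which, in fairness, the paper itself glosses over.
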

Note that Proposition \ref{prop:coveringbound} and \eqref{eq:entropycond} imply that 
 $\epsilon_0 \geq c'\sqrt{(m+n)/mn}$ for some numerical constant $c'$. Then, we have  that for the general scheme of matrix completion and general alphabets the upper bound given by Corollary \ref{cor_gneral_alph} departs from the lower bound of Theorem \ref{thm:lowerbound} by a logarithmic factor:
\begin{Corollary}\label{cor_gneral_alph_1}
Let the assumptions of Corollary \ref{cor_gneral_alph} be satisfied.
Then the least squares estimator (\ref{eq:lsestimator}) with $\Theta=\widetilde\Theta(s_n,s_m)$ satisfies the  inequality
\begin{align*}
&\inf_{\theta \in \widetilde \Theta(s_n,s_m)}\mathbb P_{\theta}\left(\Vert {\hat \theta}-\theta\Vert ^{2}_{2}\le C\frac{\Q^2+\sigma^2}{p} \left [\log(n+ m)+\log(s_ns_m)\right ]\left(R_X + R_B + R_Z \right)\right) 
\\
&\ge 1 - \exp\big(- c\left [\log(n+ m)+\log(s_ns_m)\right ]\left(R_X + R_B + R_Z \right) \big) - \exp(-pmn/18)
\end{align*}
where $C,c>0$ are absolute constants.
\end{Corollary}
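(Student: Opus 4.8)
The plan is to derive Corollary~\ref{cor_gneral_alph_1} as a direct consequence of Corollary~\ref{cor_gneral_alph} by bounding the quantities $R_i(\epsilon_0)$ from above in terms of $\left[\log(n+m)+\log(s_ns_m)\right](R_X+R_B+R_Z)$. First I would recall the lower bound $\epsilon_0 \geq c'\sqrt{(m+n)/mn}$ noted just after Corollary~\ref{cor_gneral_alph}, so that $1/\epsilon_0^2 \le C mn/(m+n) \le C mn$, hence every logarithmic term of the form $\log\frac{a u}{\epsilon_0}$ with $u\le 1$ and $a$ polynomial in $m,n$ is bounded by $C\log(mn \cdot \text{poly}(m,n))\le C'\log(n+m)$. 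The terms involving $B_{max}$ contribute an extra $\log(s_ms_n)$ (and a constant from $\log B_{max}$ absorbed into $C$), which is why that summand appears in the final bound.

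The core bookkeeping step is then to check term by term that each $R_i(\epsilon_0)$ is at most $C\left[\log(n+m)+\log(s_ns_m)\right](R_X+R_B+R_Z)$, where $R_X=nr_m\wedge n s_n\log\frac{ek_n}{s_n}$, $R_B=r_nr_m$, $R_Z=mr_n\wedge m s_m\log\frac{ek_m}{s_m}$. For $R_1(\epsilon_0)$: the terms $n s_n\log\frac{ek_n}{s_n}$ and $m s_m\log\frac{ek_m}{s_m}$ are exactly $R_X$ and $R_Z$ (in the relevant branch of the minimum); the term $(n s_n+m s_m)\log\frac{6B_{max}\sqrt{mn}\,s_ms_n}{\epsilon_0}$ is bounded by $C[\log(n+m)+\log(s_ns_m)](n s_n + m s_m)$, and $n s_n \le C\log(n+m)\cdot n s_n\log\frac{ek_n}{s_n}$ is crude but valid (absorbing the $\log\frac{ek_n}{s_n}\ge 1$ downward), so this piece is $\lesssim [\log(n+m)+\log(s_ns_m)](R_X+R_Z)$; finally $r_nr_m\log\frac{9u}{\epsilon_0}\le C\log(n+m)\,R_B$. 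The same pattern handles $R_2,R_3$: the leading terms $nr_m\log\frac{6u}{\epsilon_0}\le C\log(n+m)\,nr_m$ and one checks $nr_m$ dominates the $R_X$-branch $nr_m$, while $m s_m\log\frac{ek_m}{s_m}=R_Z$ and $m s_m\log\frac{2B_{max}\sqrt{mn}s_ms_n}{\epsilon_0}\le C[\log(n+m)+\log(s_ms_n)]m s_m\le C[\log(n+m)+\log(s_ms_n)]R_Z$. And $R_4(\epsilon_0)=mn\log\frac{3u}{\epsilon_0}\le C\log(n+m)\,mn$, which corresponds to the $nm$-branch present in $R_X,R_B,R_Z$ via $nr_m\wedge nm$ etc.; more simply one observes $R_X+R_B+R_Z \ge c\,mn$ is false in general, so for $R_4$ one instead uses that it only gets selected by the minimum when it is the smallest, in which case $R_4(\epsilon_0)\le R_2(\epsilon_0)$ and the bound on $R_2$ applies. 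Taking the minimum over $i$ on the left and using that $R_X+R_B+R_Z$ is comparable to $\min_i$ of the "clean" versions of $R_i$ gives the claimed inequality inside the probability.

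Once the deterministic inequality
$$R_1(\epsilon_0)\wedge R_2(\epsilon_0)\wedge R_3(\epsilon_0)\wedge R_4(\epsilon_0) \le C\left[\log(n+m)+\log(s_ns_m)\right](R_X+R_B+R_Z)$$
is established, I would plug it into both the risk bound and the probability bound of Corollary~\ref{cor_gneral_alph}: the risk bound becomes exactly the displayed inequality of Corollary~\ref{cor_gneral_alph_1} with $\theta^*=\theta\in\widetilde\Theta(s_n,s_m)$ so that the oracle term $\inf_{\theta}\|\theta-\theta^*\|_2^2$ vanishes; and since $\exp(-c\,x)$ is increasing in $x$... more precisely $\exp(-c\,R_1\wedge\cdots\wedge R_4)$: here I need the lower bound $R_1(\epsilon_0)\wedge\cdots\wedge R_4(\epsilon_0)\ge c''(R_X+R_B+R_Z)$, which follows since each $R_i(\epsilon_0)$ has a leading logarithmic-free term of the correct order (e.g. $R_1\ge ns_n\log\frac{ek_n}{s_n}+ms_m\log\frac{ek_m}{s_m}+r_nr_m\log\frac{9u}{\epsilon_0}\ge c(R_X+R_B+R_Z)$ using $\log\frac{9u}{\epsilon_0}\ge c>0$ from $\epsilon_0\le C u\sqrt{(m+n)/mn}$), giving $\exp(-c(R_1\wedge\cdots\wedge R_4))\le \exp(-c''(R_X+R_B+R_Z))\le \exp(-c''[\log(n+m)+\log(s_ns_m)]^{-1}\cdot[\cdots])$ — but actually the statement only claims $\exp(-c[\log(n+m)+\log(s_ns_m)](R_X+R_B+R_Z))$, which needs $R_1\wedge\cdots\wedge R_4 \ge c[\log(n+m)+\log(s_ns_m)](R_X+R_B+R_Z)$, i.e. a matching \emph{lower} bound with the log factor. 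This holds because, in each $R_i$, the terms carrying the $\log\frac{\text{poly}}{\epsilon_0}$ factor (which is $\asymp\log(n+m)+\log(s_ns_m)$) already multiply quantities of order $R_X$, $R_B$ or $R_Z$ respectively: e.g. $(ns_n+ms_m)\log\frac{6B_{max}\sqrt{mn}s_ms_n}{\epsilon_0}\ge c[\log(n+m)+\log(s_ns_m)](ns_n+ms_m)$ and $r_nr_m\log\frac{9u}{\epsilon_0}$ needs care since $\log\frac{9u}{\epsilon_0}$ could be as small as a constant — so for the lower bound one uses instead that $\epsilon_0^2 = \Theta(\frac{1}{N}\log\mathcal N_{\epsilon_0}(\Theta_1))$ and hence $N\epsilon_0^2\asymp \log\mathcal N_{\epsilon_0}\asymp R_1(\epsilon_0)\wedge\cdots\wedge R_4(\epsilon_0)$, combined with the already-known two-sided relation to $(R_X+R_B+R_Z)$ times the log factor from the upper computation — I would state this cleanly as: $R_1(\epsilon_0)\wedge\cdots\wedge R_4(\epsilon_0) \asymp [\log(n+m)+\log(s_ns_m)](R_X+R_B+R_Z)$ up to absolute constants.

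The main obstacle is precisely controlling the $\epsilon_0$-dependence tightly on both sides: $\epsilon_0$ is only pinned down implicitly through the entropy equation $N\epsilon_0^2 \asymp \log\mathcal N_{\epsilon_0}(\widetilde\Theta_u(s_n,s_m))$ of \eqref{eq:entropycond}, and the right-hand side itself depends on $\epsilon_0$ through the logarithms $\log\frac{u}{\epsilon_0}$. One must argue that this fixed-point equation forces $\log\frac1{\epsilon_0}\asymp\log(n+m)$ — intuitively because $N=mn$ is polynomial and all covering-number exponents are at most polynomial in $m,n$, so $\epsilon_0$ cannot be smaller than an inverse polynomial nor larger than a constant, hence $\log\frac1{\epsilon_0}$ is squeezed to $\Theta(\log(n+m))$ — and then each $R_i(\epsilon_0)$ collapses to (log factor)$\times$(polynomial term), which is what the computation above exploits. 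Everything else is the routine term-matching already sketched, and the transfer from Corollary~\ref{cor_gneral_alph} (with $\theta^*\in\widetilde\Theta(s_n,s_m)$, so the oracle term vanishes) to the stated inequality is immediate.
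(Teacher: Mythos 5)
Your overall route coincides with the paper's: the corollary is obtained from Corollary \ref{cor_gneral_alph} by using $\epsilon_0\ge c'\sqrt{(m+n)/mn}$ to bound every factor $\log(\cdot/\epsilon_0)$ appearing in Proposition \ref{prop:coveringbound} by $C[\log(n+m)+\log(s_ns_m)]$, deducing the term-by-term bound $R_1(\epsilon_0)\wedge R_2(\epsilon_0)\wedge R_3(\epsilon_0)\wedge R_4(\epsilon_0)\le C[\log(n+m)+\log(s_ns_m)](R_X+R_B+R_Z)$, and then applying Corollary \ref{cor_gneral_alph} with $\theta^*=\theta\in\widetilde\Theta(s_n,s_m)$ so that the oracle term vanishes. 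Your bookkeeping for this upper bound is essentially the paper's, with one loose end: your argument does not actually cover the case where $R_X=nr_m$ and $R_Z=mr_n$ simultaneously. Your bound for $R_2(\epsilon_0)$ uses $ms_m\log\frac{ek_m}{s_m}=R_Z$, which is only valid when $R_Z$ takes its sparse branch, and the remark that ``$R_4$ only gets selected when it is smallest, in which case $R_4(\epsilon_0)\le R_2(\epsilon_0)$'' is vacuous (it just returns you to the same unproved bound on $R_2$). This case needs its own short argument (one can check that it forces the relevant branches to be comparable, so that $R_1$ or $R_4$ can be used), but it is a patchable omission.

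The genuine gap is in the probability part. You correctly notice that replacing the exponent $R_1(\epsilon_0)\wedge\cdots\wedge R_4(\epsilon_0)$ of Corollary \ref{cor_gneral_alph} by $[\log(n+m)+\log(s_ns_m)](R_X+R_B+R_Z)$ requires a matching \emph{lower} bound, but your justification of it fails. Proposition \ref{prop:coveringbound} gives only upper bounds on $\log\mathcal N_\epsilon$, so the relation $\log\mathcal N_{\epsilon_0}(\Theta_1)\asymp R_1(\epsilon_0)\wedge\cdots\wedge R_4(\epsilon_0)$ is not available from the paper; and the entropy fixed point does not force $\log(1/\epsilon_0)\asymp\log(n+m)$ --- ``$\epsilon_0$ at most a constant'' only yields $\log(1/\epsilon_0)\ge 0$, and \eqref{eq:entropycond} gives no lower bound on $\log(1/\epsilon_0)$ without a lower bound on the covering number. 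In fact the asserted equivalence is false in general: if $k_n\ge n$, $k_m\ge m$ and the class is rich, then $\epsilon_0$ is of constant order, so $R_1(\epsilon_0)\wedge\cdots\wedge R_4(\epsilon_0)\le R_4(\epsilon_0)\asymp mn$, whereas $[\log(n+m)+\log(s_ns_m)](R_X+R_B+R_Z)\ge \log(n+m)\,r_nr_m=\log(n+m)\,mn$, which is larger by a logarithmic factor. What the expressions for $R_i(\epsilon_0)$ do yield is only the log-free bound $R_1(\epsilon_0)\wedge\cdots\wedge R_4(\epsilon_0)\ge c(R_X+R_B+R_Z)$. The paper itself does not attempt the matching lower bound: it simply substitutes the same quantity into both the risk bound and the exponent, so if you want a fully rigorous statement you should either retain the exponent $c\,(R_1(\epsilon_0)\wedge\cdots\wedge R_4(\epsilon_0))$ from Corollary \ref{cor_gneral_alph}, or weaken it to $c\,(R_X+R_B+R_Z)$ without the logarithmic factor; the two-sided relation you invoke to get the stated exponent cannot be proved and is the step that would fail.
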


\section{Adaptation to unknown sparsity}\label{matrix_bic}
The estimators considered above require the knowledge of 
the degrees of sparsity $s_n$ and $s_m$ of $\theta^{*}$.
In this section, we suggest a method that does not require such a knowledge and thus it is adaptive to the unknown degree of sparsity. Our approach will be to estimate $\theta^{*}$ using a sparsity penalized least squares estimator. 
Let \begin{align}\label{def_class}
\mathcal{X}=\cup_{s_n=1}^{k_n}\cup_{s_m=1}^{k_m}\widetilde
\Theta(s_n,s_m)
\end{align} 
and set 
\begin{align}\label{def_R_s}
R(s_n,s_m)&=\left [ nr_m\log(6\sqrt{n\wedge m})\right ]\wedge \left [ns_n\log\left (k_ns_m (n\wedge m)\right )\right ]\nonumber\\
&\hskip 0.5 cm+\left [ mr_n\log(6\sqrt{n\wedge m})\right ]\wedge \left [ms_m\log\left (k_ns_m (n\wedge m)\right )\right ]\nonumber\\
&\hskip 1 cm+r_nr_m \log(9\sqrt{n\wedge m}).
\end{align}
For any $\theta=XBZ^T\in \mathcal{X}$ 
let
\begin{align}\label{def_R}
R(\theta)=R(\Vert X\Vert_{0,\infty},\Vert Z\Vert_{0,\infty}).
\end{align}
In the following,  $\Omega$ denotes the random set of observed indices $(i,j)$ in model \eqref{eq:modelmc}. 
In this section we denote by $\hat \theta$ the following estimator
\begin{equation}\label{bic}
\hat \theta\in \underset{\theta=XBZ^T\in \mathcal{X}}{\arg \min}\; \left \{\Vert Y-\theta_{\Omega}\Vert^{2}_{2}+\lambda R(\theta)\right \}
\end{equation}
where $\lambda>0$ is a regularization parameter.  Note that this estimator does not require the knowledge of $p$.
The following theorem proved in Appendix \ref{proof_thm_mc} gives an upper bound on the estimation error of $\hat \theta$.
\begin{Theorem}\label{thm_mc_bic} 
Assume that  $n\,m\log (3\sqrt{n\wedge m})\geq 6\log \left (k_n\,k_m\right )$ and $d\geq 10$. Let $\lambda=8(\sigma\vee\Q)^{2}$.
Then, for any $\theta^{*}\in \mathbb{R}^{n\times m}$, with $\mathbb{P}_{\theta^*}$-probability at least $1-5\exp(-d/10)-2\exp\left (-pnm\right )$ the estimator \eqref{bic} satisfies
\begin{align*}
&\Vert {\hat \theta}-\theta^{*} \Vert ^{2}_{2}\leq C\,
\underset{\theta\in\mathcal{X}}{\inf}\,\left \{\Vert { \theta}-\theta^{*}\Vert ^{2}_{2}+\frac{\left (\sigma\vee\Q\right )^{2}}{p}\,R(\theta)\right \}
\end{align*}
where $C>0$ is an absolute constant.
\end{Theorem}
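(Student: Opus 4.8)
The plan is to establish a penalized least squares oracle inequality in the customary way, with one extra layer --- a union over the sparsity levels $(s_n,s_m)$ that is paid for by the penalty $\lambda R(\theta)$, which is what makes the procedure adaptive. Write $\Omega\subset[n]\times[m]$ for the (random) set of observed indices and $\langle\cdot,\cdot\rangle_\Omega$, $\|\cdot\|_{2,\Omega}$ for the scalar product and Frobenius norm restricted to $\Omega$, so that $\|Y-\theta_\Omega\|_2^2=\sum_{(i,j)\in\Omega}(\theta^*_{ij}+\xi_{ij}-\theta_{ij})^2$. Fix an arbitrary oracle $\theta=XBZ^T\in\mathcal X$. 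Since $\hat\theta$ minimizes $\theta\mapsto\|Y-\theta_\Omega\|_2^2+\lambda R(\theta)$, expanding the squares and cancelling the $\xi^2$ terms gives the basic inequality
\[
\|\hat\theta-\theta^*\|_{2,\Omega}^2\le\|\theta-\theta^*\|_{2,\Omega}^2+2\langle\xi,\hat\theta-\theta\rangle_\Omega+\lambda\bigl(R(\theta)-R(\hat\theta)\bigr).
\]
Everything then reduces to (i) passing from the empirical norm $\|\cdot\|_{2,\Omega}^2$ to $p\|\cdot\|_2^2$ and (ii) bounding $\langle\xi,\hat\theta-\theta\rangle_\Omega$ uniformly over $\hat\theta\in\mathcal X$, in both cases with a remainder that $\lambda R(\hat\theta)$ can absorb. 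Since $\theta^*$ is an arbitrary matrix, I would first truncate: let $L=\{(i,j):|\theta^*_{ij}|>2\Q\}$. On $L$ every element of $\mathcal X$ lies at distance comparable to $|\theta^*_{ij}|$ from $\theta^*_{ij}$, hence $\|\hat\theta-\theta^*\|_{2,L}^2\le 9\,\|\theta-\theta^*\|_{2,L}^2\le 9\,\|\theta-\theta^*\|_2^2$ \emph{deterministically}, for every $\hat\theta\in\mathcal X$; this is precisely what lets the oracle term $\|\theta-\theta^*\|_2^2$ enter the final bound without a factor $1/p$. On $L^c$ the matrices $\hat\theta-\theta^*$, $\theta-\theta^*$ and $\hat\theta-\theta$ are entrywise bounded by $3\Q$, so Bernstein-type concentration is available.

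Next I would establish a restricted isometry: uniformly over the bounded class $\mathcal X-\mathcal X$ restricted to $L^c$, $\|\Delta\|_{2,\Omega\cap L^c}^2\ge\frac p2\|\Delta\|_{2,L^c}^2-\rho(\Delta)$ with $\rho(\Delta)\lesssim\Q^2\bigl(R(\hat\theta)+R(\theta)\bigr)$ up to terms of order $\Q^2 d$, and, for the single oracle $\theta$, the matching upper bound $\|\theta-\theta^*\|_{2,\Omega}^2\le 2p\|\theta-\theta^*\|_2^2+C\Q^2 d$. The uniform control here, like the cross term in step (ii), is obtained by a peeling argument over the sparsity levels $(s_n,s_m)$ and over the Frobenius scale, using covering-number bounds of the type in Proposition~\ref{prop:coveringbound}. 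The bound $|\Omega|\le 2pnm$ holds off an event of probability at most $\exp(-pnm/3)$ and, together with the full-scale concentration, produces the $\exp(-pnm)$ terms in the failure probability. For the cross term I would condition on $\Omega$: then $\langle\xi,\hat\theta-\theta\rangle_\Omega$ is a sub-Gaussian linear functional, and a chaining/union-bound argument over the strata $\widetilde\Theta(s_n,s_m)$ --- exactly as in the proof of Theorem~\ref{thm:oracleupper}, but with the extra union over $(s_n,s_m)$ --- gives, off an event of probability at most $5\exp(-d/10)$, that
\[
2\bigl|\langle\xi,\hat\theta-\theta\rangle_\Omega\bigr|\le\tfrac14\|\hat\theta-\theta\|_{2,\Omega}^2+C\sigma^2\bigl(R(\hat\theta)+R(\theta)\bigr)+C\sigma^2 d,
\]
the point being that $R(s_n,s_m)$ in \eqref{def_R_s} is, up to an absolute constant, an upper bound on $\log\mathcal N_{\epsilon_*}\bigl(\widetilde\Theta(s_n,s_m)\bigr)$ at the critical scale $\epsilon_*\asymp\sqrt{(n+m)/(nm)}$ --- which by Proposition~\ref{prop:coveringbound} equals $R_1(\epsilon_*)\wedge R_2(\epsilon_*)\wedge R_3(\epsilon_*)\wedge R_4(\epsilon_*)$ --- while the hypothesis $nm\log(3\sqrt{n\wedge m})\ge 6\log(k_nk_m)$ ensures that the cost $\log(k_nk_m)$ of the union over the $k_nk_m$ strata is dominated.

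To conclude, I would plug the cross-term bound into the basic inequality, use $\|\hat\theta-\theta\|_{2,\Omega}^2\le 2\|\hat\theta-\theta^*\|_{2,\Omega}^2+2\|\theta-\theta^*\|_2^2$ and the oracle upper bound on $\|\theta-\theta^*\|_{2,\Omega}^2$, then invoke the restricted isometry on $L^c$ and recombine with the deterministic $L$-part via $\|\hat\theta-\theta^*\|_2^2=\|\hat\theta-\theta^*\|_{2,L}^2+\|\hat\theta-\theta^*\|_{2,L^c}^2$. At that stage the right-hand side contains $-\lambda R(\hat\theta)$ together with positive multiples $\lesssim(\sigma^2+\Q^2)R(\hat\theta)$ coming from $\rho(\cdot)$ and from the stochastic fluctuation; choosing $\lambda=8(\sigma\vee\Q)^2$ makes $\lambda R(\hat\theta)$ dominate these, so all the $R(\hat\theta)$ terms disappear. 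One is left with $\|\hat\theta-\theta^*\|_2^2\le C\bigl(\|\theta-\theta^*\|_2^2+\frac{(\sigma\vee\Q)^2}{p}R(\theta)\bigr)+C\frac{(\sigma\vee\Q)^2}{p}d$, and since $R(s_n,s_m)\gtrsim d$ in every case the last term is absorbed; taking the infimum over $\theta\in\mathcal X$ yields the theorem with the stated failure probability $1-5\exp(-d/10)-2\exp(-pnm)$.

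The step I expect to be the main obstacle is carrying out (i) and (ii) \emph{simultaneously and uniformly} over the infinite union $\mathcal X$ and over all Frobenius radii, with the deviation terms calibrated so precisely that they are swallowed by the \emph{data-dependent} penalty $\lambda R(\hat\theta)$: this forces the peeling over sparsity levels to be interleaved with peeling over scale, it requires verifying that the covering-number bounds of Proposition~\ref{prop:coveringbound} really match the closed form \eqref{def_R_s} of $R(s_n,s_m)$ at the critical radius, and it needs a careful tracking of absolute constants to check that $\lambda=8(\sigma\vee\Q)^2$ is large enough --- together with the truncation bookkeeping that prevents the arbitrary $\theta^*$ from spoiling the empirical-to-population norm comparison.
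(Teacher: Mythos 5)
Your proposal follows essentially the same route as the paper's proof: the penalized basic inequality from the definition of \eqref{bic}, uniform control of the stochastic term and of the empirical-to-population norm comparison by peeling over the sparsity strata $(s_n,s_m)$ and over dyadic Frobenius scales using the covering bounds of Proposition \ref{prop:coveringbound} (so that $R(s_n,s_m)$ dominates the log-covering number at the critical radius, cf.\ Lemma \ref{lemma_control_covering}), absorption of $R(\hat\theta)$ by the penalty $\lambda=8(\sigma\vee\Q)^{2}$, and the fact that $R(\theta)\geq d$ together with the condition $nm\log(3\sqrt{n\wedge m})\geq 6\log(k_nk_m)$ to pay for the union over strata. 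The differences are minor: the paper splits the cross term into $\langle\xi_{\Omega},\hat\theta-\theta^{*}\rangle$ and $\langle\xi_{\Omega},\theta-\theta^{*}\rangle$ (Lemmas \ref{stoch} and \ref{lemma_bernstein_1}) rather than working with $\langle\xi,\hat\theta-\theta\rangle_{\Omega}$, and it does not introduce your truncation set $L$ --- the bias enters the Bernstein thresholds as $p\Vert\theta-\theta^{*}\Vert_{2}^{2}$ directly (Lemmas \ref{rip_sampling} and \ref{lemma_bernstein_bernoulli}).
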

Theorem \ref{thm_mc_bic} implies that for the general scheme of matrix completion and general alphabets we obtain the following upper bound which departs from the lower bound of Theorem \ref{thm:lowerbound} by a logarithmic factor: 
\begin{align*}
&\inf_{\theta\in\mathcal{X}}
\mathbb{P}_{\theta} \left(
\Vert {\hat \theta}-\theta \Vert^{2}_{2}\le C\,
\frac{\left  (\sigma\vee\Q\right )^{2}}{p} \left [\log(n\wedge m)+\log(s_ns_m)\right ] (R_X + R_B + R_Z ) \right)\\
& \ge
1-5\exp(-d/6)-2\exp\left (-pnm\right ),
\end{align*}
where $C>0$ is an absolute constant.

We finish this section by two remarks.

 1. {\it Structured matrix estimation.} In the case of complete observations, that is $p=1$, the estimator \eqref{bic} coincides with the following estimator
\begin{equation} \label{bic_estimation}
\hat \theta \in \underset{\theta=XBZ^T\in \mathcal{X}}{\arg \min}\; \left \{\Vert Y-\theta\Vert^{2}_{2}+\lambda R(\theta)\right \}.
\end{equation}
Then, one can show that, with high probability, the following upper bound on the estimation error holds
\begin{align*}
\Vert {\hat \theta}-\theta^{*} \Vert ^{2}_{2}\le C\, \sigma^{2} \left [\log(n\wedge m)+\log(s_ns_m)\right ]\left(R_X + R_B + R_Z \right).
\end{align*}
Here we do not need an upper bound on $\Vert \theta^{*}\Vert_{\infty}$.
At the same time, the estimator \eqref{bic_estimation} is adaptive to the sparsity parameter $(s_n,s_m)$.

%
  
 2. {\it
  Sparse Factor Model.}  Sparse Factor Model is studied in \cite{soni_factor}. With our notation, it corresponds to a particular case of $n=k_n$ and $X$ being the identity matrix with the difference that we consider row-sparse matrix $Z$ while $Z$ is assumed component-wise sparse in  \cite{soni_factor}.  Convergence rates obtained in \cite{soni_factor} are of the order $p^{-1}(nk_m+k_mm)$ (up to a logarithmic  factor). This is greater then the upper bound given by Theorem \ref{thm_mc_bic} which, in this setting, is of the order $p^{-1}\left [n(k_m\wedge m)+s_mm\right ]$.

  
  \section{Proofs}
%

\appendix
\section{Proof of Theorem \ref{thm:upperfinite}}\label{proof_thm_upperfinite}
Set
$$
\bar R_1(n,m)=ns_n\log \left (\frac{ek_n|\mathcal{D}_\dimdone|}{s_n}\right )+r_nm,
$$
$$
\bar R_2(n,m)= ns_n\log \left (\frac{ek_n|\mathcal{D}_\dimdone|}{s_n}\right )+ms_m\log \left (\frac{ek_m|\mathcal{D}_\dimdtwo|}{s_m}\right )+r_nr_m.
$$
Since $\hat \theta$ is the least squares estimator on $\Theta(s_n,s_m)$, and $Y=\theta^{*}+W$, we have that for any $\bar\theta\in  \Theta(s_n,s_m)$,	 
	\begin{equation}\label{upperfinite_1}
	 \| \hat \theta-\theta^{*} \|^{2}_{2}\leq  \|  \bar\theta-\theta^{*} \|^{2}_{2}+2\langle \hat \theta-\bar\theta,W \rangle.
	\end{equation}
Now we use the following lemma proved in Section \ref{lemmas_theorem_upperfinite}. \begin{lemma}\label{lemma_brodten}
	Let $W\in \mathbb R^{n\times m}$ be a random matrix with independent $\sigma-$sub-Gaussian entries. Introduce the notation
$$
U_{\bar\theta}^*=\underset{\theta\in \Theta(s_n,s_m),\theta\not=\bar\theta}{\sup} \dfrac{\langle  \theta-\bar\theta,W\rangle^{2}}{\Vert  \theta-\bar\theta\Vert^{2}_{2}}.
$$
For any $t>0$, the following inequalities hold, where $C>0$ is an absolute constant:
	\begin{itemize}
		\item [(i)]
		$$
		\hskip - 1 cm\underset{\bar\theta\in \Theta(s_n,s_m)}{\sup}	\mathbb{P}\left \{U_{\bar\theta}^*\geq 3\sigma^{2}\left (\bar R_1(n,m)+t\right )\right \}\leq e^{-t}, \quad \underset{\bar\theta\in \Theta(s_n,s_m)}{\sup}	\mathbb{E} (U_{\bar\theta}^* )\leq  C\sigma^{2}\bar R_1(n,m),
	$$
	%
	\item [(ii)] 
	$$
	\hskip - 1 cm\underset{\bar\theta\in \Theta(s_n,s_m)}{\sup}	\mathbb{P}\left \{U_{\bar\theta}^*\geq 3\sigma^{2}\left (\bar R_2(n,m)+t\right )\right \}\leq e^{-t}, \quad \underset{\bar\theta\in \Theta(s_n,s_m)}{\sup}	\mathbb{E} (U_{\bar\theta}^* )\leq  C\sigma^{2}\bar R_2(n,m),
	$$
			\item [(iii)] 
			$$
	\hskip - 1 cm\underset{\bar\theta\in \Theta(s_n,s_m)}{\sup}	\mathbb{P}\left \{U_{\bar\theta}^*\geq 3\sigma^{2}\left (nm+t\right )\right \}\leq e^{-t}, \quad \underset{\bar\theta\in \Theta(s_n,s_m)}{\sup}	\mathbb{E} (U_{\bar\theta}^* )\leq  C\sigma^{2}nm.
	$$
	\end{itemize}	
\end{lemma}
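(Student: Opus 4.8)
The plan is to bound $U^*_{\bar\theta}$ by a maximum, over a finite family of low-dimensional linear subspaces of $\R^{n\times m}$, of the squared Frobenius norms of the orthogonal projections of $W$ onto these subspaces, and then to combine a sharp deviation bound for a single such projection with a union bound. Fix $\bar\theta\in\Theta(s_n,s_m)$; for $\theta\neq\bar\theta$ put $v_\theta=(\theta-\bar\theta)/\|\theta-\bar\theta\|_2$, so that $\langle\theta-\bar\theta,W\rangle^2/\|\theta-\bar\theta\|_2^2=\langle v_\theta,W\rangle^2$ and $\|v_\theta\|_2=1$. Writing $P_V$ for the orthogonal projection onto a subspace $V$, recall that $\sup_{v\in V,\ \|v\|_2=1}\langle v,W\rangle^2=\|P_VW\|_2^2$; hence it suffices to exhibit, in each of the three cases, a finite family $\{V_j\}$ of subspaces with $\{v_\theta:\theta\neq\bar\theta\}\subseteq\bigcup_j V_j$, which yields $U^*_{\bar\theta}\le\max_j\|P_{V_j}W\|_2^2$.

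Since $\Theta(s_n,s_m)=\{XBZ^T:X\in\mathcal A_{s_n},\ B\in\R^{k_n\times k_m},\ Z\in\mathcal A_{s_m}\}$ and the alphabets $\mathcal D_n,\mathcal D_m$ are finite, there are only finitely many admissible factors $X$: in each row one selects a support of size at most $s_n$ and assigns alphabet values, giving at most $\big(\binom{k_n}{s_n}|\mathcal D_n|^{s_n}\big)^n\le(ek_n|\mathcal D_n|/s_n)^{ns_n}=:M_1$ of them, and analogously $(ek_m|\mathcal D_m|/s_m)^{ms_m}$ admissible factors $Z$. For part (i), fix $X$; then every $\theta=XBZ^T$ lies in $\{XC:C\in\R^{k_n\times m}\}$, a subspace of dimension $\rank(X)\,m\le r_n m$, so $\theta-\bar\theta\in V_X:=\spann\big(\{XC:C\in\R^{k_n\times m}\}\cup\{\bar\theta\}\big)$, which has dimension at most $r_n m+1$; this produces $M_1$ subspaces of dimension $\le r_n m+1$. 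For part (ii), fix both $X$ and $Z$; then $\{XBZ^T:B\in\R^{k_n\times k_m}\}$ has dimension $\rank(X)\rank(Z)\le r_n r_m$, and adjoining $\bar\theta$ gives $M_2:=(ek_n|\mathcal D_n|/s_n)^{ns_n}(ek_m|\mathcal D_m|/s_m)^{ms_m}$ subspaces of dimension $\le r_n r_m+1$. For part (iii), take the single subspace $\R^{n\times m}$ of dimension $nm$. (If $s_n=0$ and/or $s_m=0$ the relevant list of factors is a singleton, so $\log M=0$, and the convention $0\log(x/0)=0$ keeps $\bar R_1,\bar R_2$ consistent.)

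For a fixed subspace $V$ of dimension $D$, $\|P_VW\|_2^2=W^TP_VW$ is a quadratic form in the independent $\sigma$-sub-Gaussian entries of $W$; a sharp deviation inequality for such forms (for Gaussian $W$ this is the Laurent–Massart $\chi^2$ bound, and its extension to sub-Gaussian entries is standard) gives $\Prob\big(\|P_VW\|_2^2\ge\sigma^2(D+2\sqrt{Dt}+2t)\big)\le e^{-t}$, whence $\Prob\big(\|P_VW\|_2^2\ge\sigma^2(2D+3t)\big)\le e^{-t}$ using $2\sqrt{Dt}\le D+t$. A union bound over the $M_i$ subspaces (replacing $t$ by $t+\log M_i$) yields $\Prob\big(U^*_{\bar\theta}\ge\sigma^2(2D+3\log M_i)+3\sigma^2 t\big)\le e^{-t}$. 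In part (i), $2D+3\log M_1\le 2(r_n m+1)+3ns_n\log(ek_n|\mathcal D_n|/s_n)\le 3\bar R_1(n,m)$ whenever $r_n m\ge2$ (the finitely many remaining tiny cases being trivial), which gives the claimed probability bound; parts (ii) and (iii) are identical with $\bar R_2(n,m)$ (resp.\ $nm$) in place of $\bar R_1(n,m)$. Finally, $\E U^*_{\bar\theta}=\int_0^\infty\Prob(U^*_{\bar\theta}>u)\,du\le 3\sigma^2\bar R_i+\int_0^\infty 3\sigma^2 e^{-t}\,dt=3\sigma^2(\bar R_i+1)\le C\sigma^2\bar R_i$, since $\bar R_i\ge1$.

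The main obstacle is the middle step: choosing the right finite families of covering subspaces, bounding their cardinalities through the row-sparsity constraints and the finiteness of the alphabets, and bounding their dimensions by $r_n m$, $r_n r_m$ and $nm$ in the three regimes — while correctly absorbing the translation by $\bar\theta$, which only costs one extra dimension and is harmless. A secondary point is that the explicit constant $3$ in the tail bounds requires a sharp (Laurent–Massart-type) deviation inequality for $\|P_VW\|_2^2$ rather than a crude $\varepsilon$-net estimate on each subspace; the rest is routine bookkeeping.
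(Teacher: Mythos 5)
Your proposal is correct and follows essentially the same route as the paper: reduce $U^*_{\bar\theta}$ to a maximum of squared projections of $W$ onto finitely many subspaces of dimension at most $r_nm+1$, $r_nr_m+1$, or $nm$ (indexed by the admissible factors $X$, or $(X,Z)$), apply the Hsu--Kakade--Zhang-type quadratic-form tail bound on each subspace, count the factors via $\log|\mathcal{A}_n|\le ns_n\log(ek_n|\mathcal{D}_n|/s_n)$, take a union bound, and integrate the tail for the expectation. The only difference is cosmetic: you cite the Laurent--Massart bound and its sub-Gaussian extension where the paper invokes Hsu et al., and you spell out the absorption of constants (the $r_nm\ge 2$ caveat) that the paper leaves implicit.
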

Applying Lemma \ref{lemma_brodten} (i) to \eqref{upperfinite_1}, for any $\epsilon>0$, we get that
\begin{equation}\label{upperfinite_2}
\E \left\{  \Vert\hat{\mtheta} - \mtheta^{*} \Vert_2^2 \right\} \leq  (1+\epsilon)\left \Vert\mtheta^{*} - \bar\mtheta\right \Vert_2^2+\dfrac{C\sigma^{2}}{\epsilon}\bar R_1(n,m).
\end{equation}
On the other hand, applying Lemma \ref{lemma_brodten} (i) to $ \langle \hat \theta-\bar\theta,W \rangle= \langle  (\hat \theta-\bar\theta )^T,W^T \rangle$ with $n$ replaced by $m$, for all $\epsilon>0$, we get 
\begin{equation}\label{upperfinite_3}
\E \left\{ \Vert\hat{\mtheta} - \mtheta^{*} \Vert_2^2 \right\} \leq  (1+\epsilon) \Vert\mtheta^{*} - \bar\mtheta \Vert_2^2+\dfrac{C\sigma^{2}}{\epsilon}\bar R_1(m,n).
\end{equation}
Finally using  Lemma \ref{lemma_brodten} (ii) and (iii) we get
\begin{equation}\label{upperfinite_4}
\E \left\{  \Vert\hat{\mtheta} - \mtheta^{*} \Vert_2^2 \right\}  \leq  (1+\epsilon) \Vert\mtheta^{*}- \bar\mtheta \Vert_2^2+\dfrac{C\sigma^{2}}{\epsilon}\bar R_2(n,m)
\end{equation}
and 
\begin{equation}\label{upperfinite_5}
\E \left\{\Vert\hat{\mtheta} - \mtheta^{*} \Vert_2^2 \right\}  \leq  (1+\epsilon) \Vert\mtheta^{*}- \bar\mtheta \Vert_2^2+\dfrac{C\sigma^{2}}{\epsilon} nm.
\end{equation}
Inequalities \eqref{upperfinite_2} - \eqref{upperfinite_5} imply that for all $\epsilon>0$ and all $\bar{\theta}\in \Theta(s_n,s_m)$
\begin{equation*}
\begin{split}
\E \left\{ \Vert\hat{\mtheta} - \mtheta^{*} \Vert_2^2 \right\} &\leq  (1+\epsilon) \Vert\mtheta^{*} - \bar\mtheta \Vert_2^2+\dfrac{C\sigma^{2}}{\epsilon}\bar R_3(n,m).
\end{split}
\end{equation*}
where $\bar R_3(n,m)=\min\{\bar R_1(n,m),\bar R_1(m,n),\bar R_2(n,m),nm\}$.
Taking the supremum over  $\bar{\theta}\in \Theta(s_n,s_m)$ and simplifying the expression for $\bar R_3(n,m)$ we obtain the result of Theorem \ref{thm:upperfinite}.

   \section{Proof of Lower Bounds}
   
   \subsection{Proof of Theorem \ref{thm:lowerbound}} \label{sec:prooflower}
   \noindent \textit{Lower bound with the terms $R_X$ and $R_Z$.} 
   We only prove the lower bound with the term $R_Z$ by fixing $X=X_0$ and $B=B_0$, where $X_0$ and $B_0$ are matrices specified below. The bound with $R_X$ is analogous.  Fix
   \[
       X_0= 
   \begin{cases}
       [\mathbf{I}_{k_n \times k_n}, \mathbf{0}]^T,& \text{if~} n \geq k_n,\\
       [\mathbf{I}_{n \times n}, \mathbf{0}],              & \text{otherwise}.
   \end{cases}
   \]
   By Lemma~\ref{lm:select}, for $k_m\ge 2$, we can find $S_0 \subseteq \{0,1\}^{k_m}$ with the following properties: 
   \begin{itemize}
   \item[(i)] $\log |S_0|\ge c_1^* s_m \log \frac{ek_m}{s_m}$,
   \item[(ii)] $c_2^*s_m \le \|a\|_0 \le s_m$ for all $a \in S_0$, and $\|a\|_0 = s_m$ for all $a \in S_0$ if $s_m\le k_m/2$,
   \item[(iii)] $\|a-b\|_2^2 \ge  c_3^* s_m$ for all $a, b \in S_0$ such that $a\ne b$,
   \end{itemize}
where $c_j^*>0$, $j=1,2,3$, are absolute constants.

Assume first that $k_m\ge 2$ and  $\min\{ \frac{r_n}{97},c_1^* s_m \log \frac{ek_m}{s_m}\}\ge \log 8$. 
         Then, choose an arbitrary subset $\mathcal{S} \subseteq S_0$ of cardinality $|\mathcal{S}| = \lfloor\exp\left (\min\{ \frac{r_n}{97}, c_1^* s_m \log \frac{ek_m}{s_m}\}\right )\rfloor$ where $r_n=n \wedge k_n$ and we denote by $\lfloor x\rfloor$  the integer part of $x$. Since $\log |\mathcal{S}| \le r_n/96$, Lemma \ref{lm:specialB} implies that there exists a matrix $Q \in \{-1,1\}^{r_n \times k_m}$ such that, for any $a,b \in \mathcal{S}$, 
   \begin{equation} \label{eq:propQ}
   \frac{r_n}{2} \|a - b\|_2^2 \le \| Qa - Qb\|_2^2 \le \frac{3r_n}{2} \|a- b\|_2^2.
   \end{equation}
   For this $Q$, let 
   $$
   B_0 = [\delta Q, \mathbf{0}_{(k_n-r_n) \times k_m}]^T
   $$ 
   with $\delta>0$ to be specified below. Define $\mathcal{Z} = \{Z \in \{0,1\}^{m \times k_m}, Z_{i\cdot} \in \mathcal{S} \text{ for all } i \in [m]\}$ and $T_Z = \left\{\theta=X_0 B_0 Z^T, Z \in \mathcal{Z}  \right\}$. We have $T_Z \subseteq \Theta (s_n,s_m)$ and $\log |T_Z| = \log |\mathcal{Z}| = m \log |\mathcal{S}|$. 
  
   For any matrices $\theta=X_0 B_0 Z^T\in T_Z$ , and $\bar\theta=X_0 B_0 \bar{Z}^T\in T_Z $ we have 
   \begin{eqnarray*}
   ||\theta - \bar\theta||_2^2 = \delta^2 ||QZ^T - Q\bar{Z}^T||_2^2 = \delta^2 \sum_{i=1}^{m} ||QZ_{i\cdot}^T - Q\bar{Z}_{i\cdot}^T||_2^2.
   \end{eqnarray*}
   Using (\ref{eq:propQ}) and property (iii) of $S_0$, we find
   \begin{equation} \label{eq:packZ}
   ||\theta - \bar{\theta}||_2^2 \ge \frac{r_n \delta^2}{2} \sum_{i=1}^{m} \|Z_{i\cdot} - \bar{Z}_{i\cdot}\|_2^2 \ge \frac{c_3^*r_n \delta^2 m s_m}{2}.
   \end{equation}
   On the other hand, Lemma \ref{lm:KLchi} together with (\ref{eq:propQ}) implies that the Kullback-Leibler divergence between $\mathbb{P}_{\theta}$ and $ \mathbb{P}_{\bar{\theta}}$ satisfies
   \begin{equation} \label{eq:KLZ}
   KL(\mathbb{P}_{\theta}, \mathbb{P}_{\bar{\theta}}) = \frac{p}{2\sigma^2} ||\theta - \bar{\theta}||_2^2 \le \frac{3p r_n \delta^2}{4 \sigma^2} \sum_{i=1}^{m} \|Z_{i\cdot} - \bar{Z}_{i\cdot}\|_2^2 \le \frac{3p r_n \delta^2 m s_m }{2 \sigma^2}.
   \end{equation}
   If we choose now $\delta^2=\frac{C_0 \sigma^2}{pr_ns_m}  \log |\mathcal{S}|$ for some absolute constant $C_0>0$ small enough, then (\ref{eq:packZ}), (\ref{eq:KLZ}), Theorem 2.5 in \cite{tsy_09} and the fact that $|\mathcal{S}|\ge 8$ imply that
   \begin{equation} \label{eq:lowerterm3}
   \inf_{\hat{\vartheta}}\sup_{\theta\in T_Z}\mathbb{P}_\theta\left\{||\hat{\vartheta} - \theta||_2^2 \geq C_1 \frac{\sigma^2}{p} \left(mr_n \wedge m s_m\log \frac{ek_m}{s_m} \right) \right\}\geq 0.7
   \end{equation}
   for some absolute constant $C_1>0$. This yields the term of the lower bound containing $R_Z$ in the case when  $k_m\ge 2$ and  $\min\{ \frac{r_n}{97},c_1^* s_m \log \frac{ek_m}{s_m}\}\ge \log 8$. In the complementary case, when $k_m=1$ or $\min\{ \frac{r_n}{97},c_1^* s_m \log \frac{ek_m}{s_m}\}< \log 8$, the value $R_Z$ is smaller than $C\sigma^2m/p$ for an absolute constant $C>0$. Thus, in this case, it suffices to prove the lower bound of order $m/p$. To do this, let the matrices $X_0$ and $B_0$ be such that their $(1,1)$th entry is equal to 1 and all other entries are 0, and consider the set of matrices $Z$ such that their first column is a binary vector in $\{0,1\}^m$ and all other columns are 0. This defines a set of matrices $\theta=X_0B_0Z^T$ contained in $\Theta (s_n,s_m)$, which is isometric, under the Frobenius norm, to the set of binary vectors $\{0,1\}^m$ equipped with the Euclidean norm. Therefore, a lower bound of order $m/p$
follows in a standard way as for vector estimation problem. We omit further details.
 
   Analogously, by permuting $n$ and $m$, we obtain that 
   \begin{equation} \label{eq:lowerterm1}
   \inf_{\hat{\vartheta}}\sup_{\theta\in \Theta (s_n,s_m)}\mathbb{P}_\theta\left\{ ||\hat{\vartheta} - \theta||_2^2 \geq C_1 \frac{\sigma^2}{p} \left(nr_m \wedge n s_n\log \frac{ek_n}{s_n} \right) \right\}\geq 0.7,
   \end{equation}
   which yields the term of the lower bound containing $R_X$.
   
   \smallskip
   
   \noindent \textit{Lower bound with the term $R_B$.} To obtain the term containing $R_B$ in the lower bound (\ref{eq:lower}), we fix $X=X_0$ and $Z=Z_0$ where
   \[
       X_0= 
   \begin{cases}
       [\mathbf{I}_{k_n \times k_n}, \mathbf{0}]^T,& \text{if~} n \geq k_n,\\
       [\mathbf{I}_{n \times n}, \mathbf{0}],              & \text{otherwise,}
   \end{cases}
   \quad\text{and}\quad
       Z_0= 
   \begin{cases}
       [\mathbf{I}_{k_m \times k_m}, \mathbf{0}]^T,& \text{if~} m \geq k_m,\\
       [\mathbf{I}_{m \times m}, \mathbf{0}],              & \text{otherwise}.
   \end{cases}
   \]
   We first note that if $r_nr_m< 16$, the lower bound with term $R_B$ is trivially obtained by distinguishing between two matrices. For $r_nr_m\ge 16$, by vectorizing a $r_n \times r_m$ matrix into a $r_nr_m$ dimensional vector, and applying the Varshamov-Gilbert bound \cite[Lemma 2.9]{tsy_09} we obtain that there exists a subset $\mathcal{B} \subseteq \{0,1\}^{r_n \times r_m}$  such that for any $Q, \bar{Q} \in \mathcal{B}$,
   \begin{equation*}
   || Q - \bar{Q} ||_2^2 = \sum_{i,j} \mathbf{1}\{Q_{ij} \neq \bar{Q}_{ij}\} \ge \frac{r_n r_m}{8}
   \end{equation*}
   and $\log |\mathcal{B}| \ge \frac{r_nr_m}{8} $. We
   define
   $$T_B = \left\{ \theta = X_0 B Z_0^T, B =  \delta \begin{bmatrix} Q & \mathbf{0} \\ \mathbf{0} & \mathbf{0} \end{bmatrix},  Q \in \mathcal{B} \right\}. $$
   Clearly, $T_B\subseteq \Theta (s_n,s_m)$. For any $\theta=X_0 B Z_0^T\in T_B, \bar{\theta}=X_0 \bar{B} Z_0^T \in T_B$, we have
   \begin{equation*}
   ||\theta - \bar{\theta}||_2^2 = ||B - \bar{B}||_2^2 = \delta^2  ||\mQ - \bar{\mQ}||_2^2 \ge \frac{r_nr_m \delta^2}{8}.
   \end{equation*}
    Lemma \ref{lm:KLchi} implies that
   $KL(\mathbb{P}_{\theta}, \mathbb{P}_{\bar{\theta}}) =  \frac{p}{2\sigma^2} ||\theta - \bar{\theta}||_2^2 \le  \frac{p \delta^2 r_n r_m}{2\sigma^2}.$
   Choosing $\delta^2 = C_0' \sigma^2/p$ for some constant $C_0'>0$ small enough and using Theorem 2.5 in \cite{tsy_09} we obtain
   \begin{equation} \label{eq:lowerterm2}
   \inf_{\hat{\vartheta}}\sup_{\theta\in T_B}\mathbb{P}\left\{\|\hat{\vartheta} - \theta\|_2^2 \geq C_2 \frac{\sigma^2}{p} r_n r_m \right\}\geq 0.7
   \end{equation}
   for some absolute constant $C_2>0$. 
   
   \medskip
  Combining  (\ref{eq:lowerterm3}), (\ref{eq:lowerterm1}) and (\ref{eq:lowerterm2}) proves the lower bound \eqref{eq:lower}. The bound \eqref{eq:expectlower} follows from \eqref{eq:lower} and Markov's inequality. 
  
  Finally, the lower bounds for the classes $\Theta_*(s_n,s_m)$ with $s_n, s_m\in \{0,1\}$ are proved analogously. It suffices to note that, if $s_m=0$, there is no matrix $Z$ in the definition of the class and thus there is no term $R_Z$.  If $s_m=1$
 we follow the above argument corresponding to $R_Z$ with the only difference that, by (ii) of Lemma~\ref{lm:select}, we can grant the exact equality  $\|a\|_0 = 1$ for all $a \in S_0$ and  $k_m\ge 2$. We omit further details.
   
   \subsection{Proof of Corollary \ref{cor:speclower}} \label{sec:proofspeclower}
   Define
   \begin{align*}
\Gamma_2=\{\theta=XBZ^T\,:\,X\in \mathcal{A}_{s_n}(n,2),B\in \mathbb{R}^{2\times 2}\;\text{and}\;Z\in \mathcal{A}_{s_m}(m,2) \}.
\end{align*}  
For any $\theta = XBZ^T \in \Gamma_2$, let 
  $$
  \tilde{X} = [X, \mathbf{0}_{n \times (k_n-2)}], \tilde{Z} = [Z, \mathbf{0}_{m \times (k_m-2)}], \tilde{B} = \begin{bmatrix} B & \mathbf{0}_{2 \times (k_m-2)} \\ \mathbf{0}_{(k_n-2) \times 2} & \mathbf{0} \end{bmatrix}.
  $$ 
   We have $\theta = XBZ^T = \tilde{X} \tilde{B} \tilde{Z}^T\in \Theta(s_n,s_m)$, which implies that $\Gamma_2 \subseteq \Theta(s_n,s_m)$. Thus, for any $t>0$, and any estimator $\hat{\vartheta}\in \R^{n\times m}$ we have
   \begin{equation} \label{eq:spec1} 
   \sup_{\theta \in \Theta(s_n,s_m)} \mathbb{P}_\theta (\|\hat{\vartheta} - \theta\|^2 \ge t) \ge \sup_{\theta \in \Gamma_2} \mathbb{P}_\theta (\|\hat{\vartheta} - \theta\|^2 \ge t).
   \end{equation}
   For an estimator $\hat \vartheta\in \R^{n\times m}$, let $\hat{\vartheta}_2\in \R^{n\times m} $ be the closest  matrix to $\hat \vartheta$ in the Frobenius norm among all matrices of rank at most $2$. Since $\theta \in \Gamma_2$ is of rank at most $2$, we have $\|\hat{\vartheta} - \hat{\vartheta}_2\| \le \|\hat{\vartheta} - \theta\|$ and
   $$ 
   \|\theta - \hat{\vartheta}_2\|_2^2 \le 4 \|\theta - \hat{\vartheta}_2 \|^2 \le 8  \left( \|\theta - \hat{\vartheta}\|^2 + \|\hat{\vartheta}-\hat{\vartheta}_2\|^2 \right) \le 16 \|\theta - \hat{\vartheta}\|^2. 
   $$ 
   Thus,
   $$ \mathbb{P}_\theta (\|\hat{\vartheta} - \theta\|^2 \ge t) \ge \mathbb{P}_\theta (\|\hat{\vartheta}_2 - \theta\|_2^2 \ge 16t) $$
   for any $\theta \in \Gamma_2$ and any estimator $\hat{\vartheta}$. The last inequality and (\ref{eq:spec1}) imply 
   \begin{eqnarray*} 
   \inf_{\hat{\vartheta}} \sup_{\theta \in \Theta(s_n,s_m)} \mathbb{P}_\theta (\|\hat{\vartheta} - \theta\|^2 \ge t) &\ge&  \inf_{\hat{\vartheta}}\sup_{\theta \in \Gamma_2} \mathbb{P}_\theta (\|\hat{\vartheta}- \theta\|_2^2 \ge 16t). 
    \end{eqnarray*}
   The result of Corollary \ref{cor:speclower} follows now by choosing $ t = C_3 \frac{\sigma^2}{p}(n + m)$ for some constant $C_3>0$ and using Theorem \ref{thm:lowerbound} with $k_m=k_n=2$. 
   
   \section{Proof of Theorem \ref{corollary_spectral}}\label{proof_corollary_spectral}
   Note that $Y'-\theta^{*} = Y/p -\theta^{*} = W$. It is straightforward to see that if $\lambda\geq \Vert W \Vert$, then 
    \begin{equation*}
    \Vert \tilde \theta-\theta^{*}\Vert\leq 2\lambda.
    \end{equation*}
   Thus, to prove the theorem, it suffices to show that for $\lambda=c(b+\Q)\sqrt{\frac{n\vee m}{p}}$ with $c>0$ large enough we
   have $\lambda\geq \Vert W\Vert$ with high probability.

   Since $W_{ij}= \theta_{ij}(E_{ij}-p)/p +\xi_{ij}E_{ij}/p$, we have
       \begin{align}\label{spectral_1}
    \Vert W \Vert\leq p^{-1}(\left \Vert \Sigma_1 \right \Vert +\left \Vert \Sigma_2 \right \Vert)
    \end{align}
    where
    $
     \Sigma_1 \in \R^{n\times m}
    $
    is a matrix with entries $\xi_{ij}E_{ij}$ and $\Sigma_2\in \R^{n\times m}$ is a matrix with entries $\theta_{ij}(E_{ij}-p)$.
    The second term in \eqref{spectral_1} is controlled using the following bound on the spectral norms of random matrices.
      \begin{proposition}[\cite{Bandeira}]\label{pr1}
      Let $A$ be an $n\times m$ matrix whose entries $A_{ij}$ are independent centered bounded random variables. Then, for any $0<\epsilon\leq 1/2$ there exists an absolute constant $c_{\epsilon}$ depending only on $\epsilon$ such that, for every $t> 0$,
      $$ \mathbb P\left \{ \left \Vert A\right\Vert\geq (1+\epsilon)2\sqrt{2}(\sigma_1\vee \sigma_2)+t\right \}\leq (n\wedge m)\exp\left (-\frac{t^{2}}{c_{\epsilon}\sigma^{2}_*}\right )$$ 
      where 
      $$\sigma_{1}=\underset{i}{\max}\sqrt{\sum_{j}\mathbb E[A_{ij}^{2}]},
      \quad
      \sigma_{2}=\underset{j}{\max}\sqrt{\sum_{i}\mathbb E[A_{ij}^{2}]},
        \quad \sigma_{*}=\underset{ij}{\max}\vert A_{ij}\vert.$$
         \end{proposition}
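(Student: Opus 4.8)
The plan is to establish the inequality in two stages: first control the \emph{expected} spectral norm with the correct leading constant, then add a dimension-free sub-Gaussian fluctuation bound around the mean, and finally reconcile the two by letting the $(n\wedge m)$ prefactor absorb a residual logarithmic correction. Throughout, set $N = n\wedge m$ and $\tau = (1+\epsilon)2\sqrt{2}(\sigma_1\vee\sigma_2)$, and recall that $\sigma_*$ is a deterministic bound on $|A_{ij}|$ since the entries are bounded.

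First I would reduce the rectangular problem to a symmetric one via the Hermitian dilation $\tilde A = \begin{bmatrix} \mathbf{0} & A \\ A^T & \mathbf{0}\end{bmatrix}\in\mathbb R^{(n+m)\times(n+m)}$, which is symmetric with independent, centered, bounded off-diagonal entries and satisfies $\Vert A\Vert = \Vert\tilde A\Vert$. The parameters transfer transparently: every row of $\tilde A$ obeys $\sqrt{\sum_j \mathbb E[\tilde A_{ij}^2]}\le \sigma_1\vee\sigma_2$, the entrywise bound is still $\sigma_*$, and $\mathrm{rank}(\tilde A)\le 2N$, so the effective dimension entering any logarithmic correction is $N$ rather than $n+m$. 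On this symmetric model I would run the moment (trace) method: expand $\mathbb E\,\mathrm{tr}(\tilde A^{2q})$ as a sum over closed walks of length $2q$, observe that centering annihilates every walk traversing some edge exactly once, isolate the leading ``double-tree'' walks (counted by Catalan-type numbers) that produce the leading term, and dominate walks of higher edge multiplicity by the corresponding powers of $\sigma_*$. Optimizing over the order $q\asymp\log(eN)$ yields the expectation bound of \cite{Bandeira},
\begin{equation*}
\mathbb E\,\Vert A\Vert \le (1+\epsilon)\,2\sqrt{2}\,(\sigma_1\vee\sigma_2) + \kappa_\epsilon\,\sigma_*\sqrt{\log(eN)},
\end{equation*}
for a constant $\kappa_\epsilon$ depending only on $\epsilon$. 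I expect this sharp combinatorial estimate to be the main obstacle: obtaining the leading constant without spurious logarithmic inflation requires the delicate walk-counting (or the Gaussian comparison/interpolation argument) of Bandeira and van Handel, and is the genuinely hard part of the statement.

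Next I would obtain concentration around the mean. The map $A\mapsto\Vert A\Vert$ is convex and $1$-Lipschitz with respect to the Frobenius norm (since $|\,\Vert A\Vert-\Vert B\Vert\,|\le\Vert A-B\Vert\le\Vert A-B\Vert_2$), and each coordinate $A_{ij}$ varies in an interval of length at most $2\sigma_*$. Hence Talagrand's concentration inequality for convex Lipschitz functions of independent bounded variables gives, for an absolute constant $c_0>0$,
\begin{equation*}
\mathbb P\left\{\Vert A\Vert \ge \mathbb E\,\Vert A\Vert + s\right\}\le 2\exp\!\left(-\frac{s^2}{c_0\sigma_*^2}\right),\qquad s>0,
\end{equation*}
a bound whose exponent is dimension-free and carries exactly the entrywise scale $\sigma_*^2$ demanded by the statement.

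Finally I would splice the two estimates together. Writing $t = \kappa_\epsilon\sigma_*\sqrt{\log(eN)} + s$, the previous two displays give $\mathbb P\{\Vert A\Vert\ge\tau+t\}\le 2\exp(-s^2/(c_0\sigma_*^2))$ whenever the shift $s$ is positive. For $t\ge 2\kappa_\epsilon\sigma_*\sqrt{\log(eN)}$ one has $s\ge t/2$, producing the clean sub-Gaussian tail $2\exp(-t^2/(4c_0\sigma_*^2))\le N\exp(-t^2/(c_\epsilon\sigma_*^2))$ after enlarging the constant; for the complementary range of small $t$, choosing $c_\epsilon$ large enough (e.g. $c_\epsilon\ge 12\kappa_\epsilon^2$) forces $N\exp(-t^2/(c_\epsilon\sigma_*^2))\ge 1$, so the asserted inequality holds trivially. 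The degenerate case $N=1$, in which $A$ is a single row or column and $\Vert A\Vert$ is the Euclidean norm of a vector of independent bounded variables, is handled directly by a scalar sub-Gaussian estimate. Collecting these cases gives the stated bound with a constant $c_\epsilon$ depending only on $\epsilon$.
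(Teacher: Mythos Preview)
The paper does not prove this proposition: it is stated with attribution to \cite{Bandeira} and invoked as a black box in the proof of Theorem~\ref{corollary_spectral}. There is therefore no in-paper argument to compare your proposal against.

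Your sketch is, however, a faithful outline of the proof in the cited source. The three ingredients---a sharp bound on $\mathbb E\Vert A\Vert$ via the moment method with careful walk-counting, dimension-free sub-Gaussian concentration around the mean from Talagrand's convex-Lipschitz inequality, and absorption of the residual $\kappa_\epsilon\sigma_*\sqrt{\log N}$ correction into the $(n\wedge m)$ prefactor when $t$ is small---are precisely the structure of the Bandeira--van Handel argument. One minor correction: the reason the effective dimension in the logarithmic term is $N=n\wedge m$ rather than $n+m$ is not that $\mathrm{rank}(\tilde A)\le 2N$ (that inequality points the wrong way for the moment method, which starts from $\Vert\tilde A\Vert^{2q}\le\mathrm{tr}(\tilde A^{2q})$), but rather that $\mathrm{tr}(\tilde A^{2q})=2\,\mathrm{tr}((AA^T)^q)=2\,\mathrm{tr}((A^TA)^q)$, so one may run the combinatorial expansion with $\min(n,m)$ starting vertices. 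Apart from this detail, the proposal is sound.
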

   We now apply Proposition \ref{pr1} with   $ A_{ij} =(E_{ij}-p)\theta_{ij}$. Then 
         \begin{equation*}
         \begin{split}
         \sigma_1\leq \Q\sqrt{np},\quad  \sigma_2\leq  \Q\sqrt{mp}\quad \text{and}\quad\sigma_{*}\leq \Q.
         \end{split}
         \end{equation*}
  Using these bounds and  taking in Proposition \ref{pr1} the values $\epsilon=1/2$ and $t=\sqrt{c_{1/2}}\Q\log(n+m)$ we obtain that there exists an absolute constant $c^{*}>0$ such that  
      \begin{align*}
      \left \Vert \Sigma_2 \right \Vert\leq 3\Q\sqrt{2(n\vee m)p}+c^{*}\Q\,\sqrt{2\log(n+m)}
      \end{align*} with probability at least $1-1/(n+m)$. 
   Similarly, there exists an absolute constants $c^{*}>0$ such that,  with probability at least $1-1/(n+m)$, 
      \begin{align*}
      \left \Vert \Sigma_1\right \Vert\leq 3\sigma\sqrt{2(n\vee m)p}+c^{*}b\,\sqrt{2\log(n+m)}.
      \end{align*}
     Using these remarks, the assumption $p\geq \log(n+m)/(n\vee m)$, and \eqref{spectral_1} we obtain that the choice $\lambda=c(b+\Q)\sqrt{\frac{n\vee m}{p}}$ with $c>0$ large enough implies the inequality $\lambda\geq \Vert W\Vert$  with 
     probability at least $1-2/(n+m)$.
  \section{Proof of Theorem \ref{thm:oracleupper}}\label{proof_thm:oracleupper}  
  Let $R=\sqrt{\frac{3(\Q^2+\sigma^2)}{p} N \epsilon_0^2}$. We consider two cases separately. 
  \paragraph{Case 1:} $\hat{\theta} \in \mathcal{G} \triangleq \{\theta \in \Theta, \|\theta - \theta_0\|_2 \le 2^{s^{*}}R\}$. Then  the desired result follows from the fact that $\|\hat{\theta}-\theta_0\|_2 \le 2^{s^{*}}R$ and from the inequality
  $$ \|\hat{\theta} - \theta^* \|_2^2 \le 2 \|\theta_0-\theta^*\|_2^2 + 2\|\hat{\theta} - \theta_0\|_2^2. $$
  \paragraph{Case 2:} $\hat{\theta} \not\in \mathcal{G}$. 
  The definition of the least squares estimator (\ref{eq:ls}) implies 
  $\| Y' - \hat{\theta} \|_2^2 \le \| Y' - \theta_0 \|_2^2.$
  Writing $Y'$ as $\theta^* + W$ and rearranging, we obtain
  $$ \|\hat{\theta} - \theta^* \|_2^2 \le \|\theta_0-\theta^*\|_2^2 + 2 \langle \hat{\theta} - \theta_0, W \rangle. $$
  Since $\hat{\theta} \in \mathcal{G}^c$, Lemma \ref{lm:chaining} yields
  \begin{equation} \label{eq:chaining}
  \langle \hat{\theta} - \theta_0, W \rangle \le \frac{1}{8} \|\theta - \theta_0\|_2^2 + 96 R^2
  \end{equation}
  with probability greater than $1-4\exp(-\alpha R^2/2) - 2\exp(-pN/6)$ where $\alpha=\frac{p}{6(\Q^{2}+\sigma^{2})}$. On the event where (\ref{eq:chaining}) holds, 
  \begin{eqnarray*}
  \|\hat{\theta} - \theta^* \|_2^2 &\le& \|\theta_0-\theta^*\|_2^2 + \frac{1}{4} \|\theta - \theta_0\|_2^2 + 192 R^2 \\
  &\le&  \frac{3}{2}\|\theta_0-\theta^*\|_2^2 + \frac{1}{2} \|\hat{\theta} - \theta^*\|_2^2 + 192 R^2.
  \end{eqnarray*}
  This yields $\|\hat{\theta} - \theta^* \|_2^2 \le 3\|\theta_0-\theta^*\|_2^2 + 384 R^2$ with probability greater than $1-4\exp(-\alpha R^2/2) - \exp(-pN/6)$.
\section{Proof of Proposition \ref{prop:coveringbound}}\label{proof_prop:coveringbound}
  We start by proving the upper bound corresponding to $R_1(\epsilon)$. Let $\bar{\mathcal{A}}_{n}(\delta)$ denote the $\delta$-covering set of $\widetilde{\mathcal{A}}_{n}$   
  under the $\ell_{\infty}$ norm. 
  For any given $\theta=XBZ^T \in \widetilde\Theta_{u}(s_n,s_m)$, there exist
  $X_0 \in \bar{\mathcal{A}}_{n}(\delta_1)$ and $Z_0 \in \bar{\mathcal{A}}_{m}(\delta_2)$ such that $||X - X_0||_\infty \le  \delta_1$ and $||Z - Z_0||_\infty \le  \delta_2$. For such $X_0$ and $Z_0$, let $\overline{\mathcal{T}}(X_0, Z_0)$ be the $\epsilon/3$-covering set of $\mathcal{T}_u(X_0,Z_0)$ defined in Lemma \ref{lm:coveringB}. For any $B\in \mathcal{T}_u(X_0,Z_0)$, there exists $B_0\in \overline{\mathcal{T}}(X_0, Z_0)$ such that
  \begin{eqnarray*}
  && ||X B Z^T -  X_0 B_0 Z_0^T ||_2 \\ &\le& ||X B Z^T-  X_0 B Z^T||_2 + 
  ||X_0 B Z^T - X_0 B Z_0^T||_2 + || X_0  B Z_0^T - X_0 B_0 Z_0^T||_2 \\
  &\le& 2s_n  \sqrt{nm} \|X - X_0\|_\infty \|B Z\|_\infty+ 2s_m \sqrt{nm}\|X_0B\|_\infty \|Z-Z_0\|_\infty + \frac{\epsilon}{3}\\
  &\le& 2 B_{max} \sqrt{nm} s_n s_m (\delta_1+\delta_2) + \frac{\epsilon}{3},
  \end{eqnarray*}
  where in the second inequality we have used Lemma \ref{lm:matrixbound} and the last inequality is due to the assumptions that $||B||_\infty \le B_{max}, ||X||_{\infty} \le 1$ and $||Z||_{\infty} \le 1$. Choosing $\delta_1=\delta_2 = \epsilon/(6B_{max}\sqrt{ nm} s_n s_m)$ we get that the set
  $$\overline{\Theta}_u := \bigcup_{X_0 \in \overline{\mathcal{A}}_{n}(\delta_1), Z_0 \in \overline{\mathcal{A}}_{m}(\delta_2)} \overline{\mathcal{T}}(X_0,Z_0)$$ 
  is an $\epsilon$-covering set of $\widetilde\Theta_{u}(s_n,s_m)$. Then, Lemmas \ref{lm:coveringB} and \ref{lm:coveringZ} imply
  \begin{eqnarray*}
  \log \mathcal{N}_\epsilon\left ( \widetilde\Theta_{u}(s_n,s_m)\right )&\le& \log \left| \overline{\mathcal{A}}_{n}(\delta_1) \right| + \log \left| \overline{\mathcal{A}}_{m}(\delta_2) \right| + \max_{X_0, Z_0}\log \left| \overline{\mathcal{T}}(X_0,Z_0) \right| \\
  &\le& ns_n \log \frac{ek_n}{s_n} + ns_n \log \frac{6B_{max} \sqrt{mn} s_m s_n}{\epsilon} + r_n r_m \log \frac{9u}{\epsilon} \\
  && +  ms_m \log \frac{ek_m}{s_m} + ms_m \log \frac{6B_{max} \sqrt{mn} s_m s_n}{\epsilon}.
  \end{eqnarray*}
  To get upper bounds corresponding to $R_2(\epsilon),R_3(\epsilon)$ and $R_4(\epsilon)$ we define 
  $$\Theta_u^{1} = \left\{ \theta= A Z^T, A \in [-s_nB_{max}, s_nB_{max}]^{n \times k_m}, Z \in \widetilde{\mathcal{A}}_m, \| \theta - \theta_0\|_2 \le u \right\},$$
  $$\Theta_u^{2} = \left\{ \theta= X G^T, X \in \widetilde{\mathcal{A}}_n, G \in [-s_mB_{max}, s_mB_{max}]^{ k_n \times m}, \| \theta - \theta_0\|_2 \le u \right\},$$
  and
  $$\Theta_u^{3} = \left\{ \theta \in [-s_ns_mB_{max}, s_ns_mB_{max}]^{n \times m}, \| \theta - \theta_0\|_2 \le u \right\}.$$
  It is easy to verify that $\widetilde\Theta_{u}(s_n,s_m) \subseteq \Theta_u^{1}, \widetilde\Theta_{u}(s_n,s_m)\subseteq \Theta_u^2$ and $\widetilde\Theta_{u}(s_n,s_m) \subseteq \Theta_u^3$. Using the same techniques as above we obtain
  \begin{eqnarray*}
  \log \mathcal{N}_\epsilon\left ( \widetilde\Theta_{u}(s_n,s_m)\right )\le nr_m \log \frac{6u}{\epsilon} + ms_m \log \frac{ek_m}{s_m} + ms_m \log \frac{2B_{max}\sqrt{mn}s_ms_n}{\epsilon},
  \end{eqnarray*}
  \begin{eqnarray*}
  \log \mathcal{N}_\epsilon\left ( \widetilde\Theta_{u}(s_n,s_m)\right )&\le& mr_n \log \frac{6u}{\epsilon} + ns_n \log \frac{ek_n}{s_n} + ns_n \log \frac{2B_{max}\sqrt{mn}s_ms_n}{\epsilon},
  \end{eqnarray*}
  and
  \begin{eqnarray*}
  \log \mathcal{N}_\epsilon\left ( \widetilde\Theta_{u}(s_n,s_m)\right )&\le& m n \log \frac{3u}{\epsilon}.\end{eqnarray*}
  Combining these bounds completes the proof of Proposition \ref{prop:coveringbound}.
  

\section{Proof of Theorem \ref{thm_mc_bic}}\label{proof_thm_mc}
Let $\mathcal{I}=\underset{(s_n,s_m)}{\min} R(s_n,s_m)$ and $\nu^{2}=\frac{\left (\sigma\vee\Q\right )^{2}}{p}\mathcal{I}$. By the definition of $ R(s_n,s_m)$ in \eqref{def_R_s} we have $\mathcal{I}\geq d$. Note first  that if $\Vert \hat \theta-\theta^{*}\Vert_{2}\leq 2^{7}\nu,$
then Theorem \ref{thm_mc_bic} holds trivially. So, without loss of generality, we can assume that 
$\hat \theta\in \mathcal{X}_{\nu}\triangleq \{\theta \in \mathcal{X}: \|\theta - \theta^{*}\|_2 > 2^{7}\nu\}$.
By the definition \eqref{bic} of the estimator $\hat \theta=\hat X\hat B\hat Z^T$ we have that for any $\theta=X B Z^T\in \mathcal{X}$ 
\begin{align*} 
\Vert Y-\hat \theta_{\Omega}\Vert ^{2}_{2}+\lambda R(\hat\theta)&\leq\Vert Y-\theta_{\Omega}\Vert ^{2}_{2}+\lambda R(\theta)
   \end{align*}  
which implies
\begin{align} \label{thm_bic_1}
 \Vert \hat \theta_{\Omega}-\theta^{*}_{\Omega}\Vert ^{2}_{2}&\leq \Vert \theta_{\Omega}-\theta^{*}_{\Omega}\Vert ^{2}_{2}-2\langle \xi_{\Omega},\theta-\theta^{*}\rangle+\lambda R(\theta)+2\langle \xi_{\Omega},\hat\theta-\theta^{*}\rangle-\lambda R(\hat\theta)
   \end{align}
where we set $\xi=(\xi_{ij})$. We will bound each term in \eqref{thm_bic_1} separately.   
 Lemma \ref{stoch} implies that with probability at least $1-\exp(-pnm)-2\exp(-d/10)$,
 \begin{align}\label{thm_bic_2}
 \langle \xi_{\Omega},\hat\theta-\theta^{*}\rangle\leq 2 \left (\sigma\vee\Q\right )^{2}R(\hat \theta)+\frac{p}{8}\Vert \hat \theta-\theta^{*}\Vert ^{2}_{2}.
 \end{align}
To control   $\langle \xi_{\Omega},\theta-\theta^{*}\rangle$, we use Lemma \ref{lemma_bernstein_1} with $t=p\Vert \theta-\theta^{*}\Vert ^{2}_{2}+\left (\sigma\vee\Q\right )^{2}R(\theta)$. It follows  that, with probability at least $1-\exp(-d/2)$, 
 \begin{align}\label{thm_bic_4}
 \langle \xi_{\Omega},\theta-\theta^{*}\rangle\leq \left (\sigma\vee\Q\right )^{2}R( \theta)+p\Vert  \theta-\theta^{*}\Vert ^{2}_{2}
 \end{align}
 where we have used that $R(\theta)\geq d$. On the other hand, Lemma \ref{rip_sampling}  implies that, with probability at least $1-\exp(-pnm)-2\exp(-d/6)$,
 \begin{align}\label{thm_bic_3}
   \Vert \hat \theta_{\Omega}-\theta^{*}_{\Omega}\Vert ^{2}_{2}+4 \;\Q^{2}R(\hat \theta)\geq \frac{p}{2}\Vert \hat \theta-\theta^{*}\Vert ^{2}_{2}.
  \end{align}
  Finally, using Lemma \ref{lemma_bernstein_bernoulli} with $a_{ij}=\left (\theta-\theta^{*}\right )^{2}_{ij}$ and $t=\frac{p}{2}\Vert \theta-\theta^{*}\Vert ^{2}_{2}+4\Q^{2}d$ we get that, with probability at least  $1-\exp(-d)$,
 \begin{align}\label{thm_bic_5}
   \Vert  \theta_{\Omega}-\theta^{*}_{\Omega}\Vert ^{2}_{2}\leq  4\Q^{2}R(\theta)+\dfrac{3p}{2}\Vert \theta-\theta^{*}\Vert ^{2}_{2}
  \end{align}
  where we have used that $R(\theta)\geq d$.
  Plugging \eqref{thm_bic_2} - \eqref{thm_bic_5} in \eqref{thm_bic_1} we get
 \begin{align*} \label{thm_bic_6}
 \dfrac{p}{4} \Vert \hat \theta-\theta^{*}\Vert ^{2}_{2}&\leq  \dfrac{5p}{2}\Vert \theta-\theta^{*}\Vert ^{2}_{2} +8\left (\sigma\vee\Q\right )^{2}R(\hat\theta) +6\left (\sigma\vee\Q\right )^{2}R(\theta)\\&\hskip 0.5 cm+\lambda R(\theta)-\lambda R(\hat\theta)\nonumber
    \end{align*}
with probability larger then $1-5\exp(-d/10)-2\exp\left (-pnm\right )$ where we have used that $d\geq 10
$. Taking here $\lambda=8\left (\sigma\vee\Q\right )^{2}$ finishes the proof. 
  \section{Proofs of the lemmas}\label{sec:technical}
   \subsection{Lemmas for Theorem \ref{thm:upperfinite}}\label{lemmas_theorem_upperfinite}
   \begin{proof}[Proof of Lemma \ref{lemma_brodten}]
We start by proving (i).   	 Note that for any fixed $X\in\mathcal{A}_{n}$, $\theta=XBZ^T$ belongs to a linear space of dimension not greater than $nm\wedge k_nm = r_n m$ as $BZ^T$ belongs to a linear space of dimension not greater than $k_nm$.
Thus, $\theta-\theta^{*}$ belongs to a linear space of dimension not greater than $r_nm+1$, which we denote by $W_{r_nm}(X)$. We have 
   	\begin{equation*}
   	\underset{\theta\in \Theta(s_n,s_m),\theta\not=\theta^{*}}{\sup} \dfrac{\langle  \theta-\theta^{*},W\rangle^{2}}{\Vert  \theta-\theta^{*}\Vert^{2}_{2}}\leq \underset{X\in\mathcal{A}_n}{\max}\; U_{X}
   	\end{equation*}
   	where, for a fixed $X\in \mathcal{A}_n$, we define
   	\[ U_{X}= \underset{\theta\in \Theta(s_n,s_m),\theta\not=\theta^{*},\theta=XBZ^T}{\sup} \dfrac{\langle  \theta-\theta^{*},W\rangle^{2}}{\Vert  \theta-\theta^{*}\Vert^{2}_{2}}\leq \underset{u\in W_{r_nm}(X):\Vert u\Vert_{2}=1}{\sup} \langle  u,W\rangle.\]
   	It follows from Lemma \ref{lem:kakade}  that
   	\begin{equation*}
   	\mathbb{P}\left \{U_{X}\geq\sigma^{2}\left (2(r_nm+1)+3v\right )\right \}\leq e^{-v},\quad \forall v>0.
   	\end{equation*}
   	Note that, for any $A \in \mathcal{A}_{\dimdone}$, there are at most $\dimsone$ non-zero entries in each row of $A$. This implies that the number of different supports of matrix $A$ is at most ${\dimkone \choose \dimsone}^\dimdone$. For those $\dimdone\dimsone$ non-zero entries, there are at most $|\mathcal{D}_\dimdone|^{\dimdone\dimsone}$ choices. Then, we have
   	\begin{equation}\label{bound_card}
   	\log |\mathcal{A}_\dimdone| \le \dimdone \log {\dimkone \choose \dimsone} + {\dimdone\dimsone} \log |\mathcal{D}_\dimdone| \le \dimdone\dimsone \log \left( \frac{e\dimkone|\mathcal{D}_\dimdone|}{\dimsone} \right).
   	\end{equation}
   	Applying the union bound and using \eqref{bound_card} we get
   	\begin{equation*}
   	\mathbb{P}\left \{\underset{X\in \mathcal{A}_n}{\max}\;U_{X}\geq\sigma^{2}\left (2(r_nm+1)+3v\right )\right \}\leq \left (\dfrac{ek_n|\mathcal{D}_\dimdone|}{s_n}\right )^{s_nn}e^{-v},\quad \forall v>0.
   	\end{equation*}
   	which yields the first result of Lemma \ref{lemma_brodten}. To get the bound on the expectation, we use the fact that for any non-negative random variable $\xi$ and any $a>0$
   	\[ \mathbb{P}(\xi\geq a+t)\geq e^{-t},\quad \forall t>0 \] implies 
   	$ \mathbb{E}\xi\leq a+1. $
   	The proof of (ii) follows the same lines fixing both $X$ and $Z$. To prove (iii) we use that $\theta=XBZ^T$ belongs to a linear space of dimension not greater than $nm$.
   \end{proof}

   \begin{lemma}\label{lem:kakade}
   	Let $\xi$ be a $\sigma$-subgaussian random vector in $\R^n$, and let $\mathcal W$ be a linear subspace of~$\R^n$ with ${\rm dim}(\mathcal W)=d$.  Consider the Euclidean ball $B(0,1)=\{u\in \mathcal W: \, \|u\|_2\le 1\}$. Then, for any $t>0$,
   	\begin{equation*}\label{1kakade}
   	\mathbb P\big(\max_{u\in B(0,1)} \,(u^T\xi)^2 \ge \sigma^2(d+2\sqrt{dt\,} +2t)\big) \le e^{-t}. 
   	\end{equation*}
   \end{lemma}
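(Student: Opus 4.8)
The plan is to identify the left-hand quantity as the squared Euclidean norm of the projection of $\xi$ onto $\mathcal W$ and to show that this random variable is dominated, in the sense of moment generating functions, by $\sigma^2\chi^2_d$; the stated deviation inequality is then exactly the classical chi-square tail bound of Laurent and Massart.

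First I would fix an orthonormal basis $v_1,\dots,v_d$ of $\mathcal W$ and observe that for every realization of $\xi$ one has $\max_{u\in B(0,1)}(u^T\xi)^2=\|P_{\mathcal W}\xi\|_2^2=\sum_{i=1}^d(v_i^T\xi)^2$, where $P_{\mathcal W}$ denotes the orthogonal projection onto $\mathcal W$. By the definition of a $\sigma$-sub-Gaussian vector (equivalently, since $\xi$ has independent $\sigma$-sub-Gaussian coordinates), every linear functional satisfies $\E\exp(\mu\langle w,\xi\rangle)\le\exp(\mu^2\sigma^2\|w\|_2^2/2)$ for all $\mu\in\R$ and all $w\in\R^n$; this is the only distributional input used below.

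Next I would bound the moment generating function of $S:=\sum_{i=1}^d(v_i^T\xi)^2$. For $0\le\lambda<1/(2\sigma^2)$, insert the Gaussian identity $e^{\lambda z^2}=\E_g\exp(\sqrt{2\lambda}\,zg)$, $g\sim\mathcal N(0,1)$, in each of the $d$ coordinates, using independent standard normals $g_1,\dots,g_d$ that are also independent of $\xi$, and interchange the order of integration (Tonelli applies, the integrand being nonnegative):
\[
\E e^{\lambda S}=\E_g\,\E_\xi\exp\bigl(\sqrt{2\lambda}\,\langle w_g,\xi\rangle\bigr),\qquad w_g:=\sum_{i=1}^d g_iv_i .
\]
Conditionally on $g$, the inner expectation is at most $\exp(\lambda\sigma^2\|w_g\|_2^2)=\exp\bigl(\lambda\sigma^2\sum_i g_i^2\bigr)$ by the sub-Gaussian estimate and the orthonormality of the $v_i$; taking expectation over the $g_i$ then gives $\E e^{\lambda S}\le\prod_{i=1}^d\E\exp(\lambda\sigma^2 g_i^2)=(1-2\lambda\sigma^2)^{-d/2}$. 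Hence $S$ admits the same m.g.f.\ bound as $\sigma^2\chi^2_d$ on the whole interval $0\le\lambda<1/(2\sigma^2)$.

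Finally I would apply the Chernoff bound: for every $x>0$,
\[
\mathbb P(S\ge x)\le\inf_{0\le\lambda<1/(2\sigma^2)}e^{-\lambda x}(1-2\lambda\sigma^2)^{-d/2},
\]
which is precisely the bound optimized by Laurent and Massart for $\chi^2_d$ after rescaling by $\sigma^2$. Choosing $x=\sigma^2(d+2\sqrt{dt}+2t)$ and carrying out their elementary optimization (or quoting their lemma directly) yields $\mathbb P\bigl(S\ge\sigma^2(d+2\sqrt{dt}+2t)\bigr)\le e^{-t}$, which is the claim. The only step that is not completely routine is the m.g.f.\ comparison of the previous paragraph, and within it the fact that $v_1^T\xi,\dots,v_d^T\xi$ need not be independent; the joint Gaussian linearization $\sum_i g_i v_i^T\xi=\langle w_g,\xi\rangle$ is exactly what circumvents this, reducing everything to the behaviour of a single linear functional of $\xi$.
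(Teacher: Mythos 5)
Your proof is correct, and its first step---identifying $\max_{u\in B(0,1)}(u^T\xi)^2$ with $\|P_{\mathcal W}\xi\|_2^2$---is exactly the paper's reduction. Where you diverge is in how the tail of $\|P_{\mathcal W}\xi\|_2^2$ is then controlled: the paper simply invokes the Hsu--Kakade--Zhang tail inequality for quadratic forms $\|A\xi\|_2^2$ of sub-Gaussian vectors with $A=P_{\mathcal W}$, for which $\Sigma=P_{\mathcal W}$, ${\rm Tr}(\Sigma)={\rm Tr}(\Sigma^2)=d$ and $\lambda_{\max}(\Sigma)=1$, so the claimed bound drops out immediately. You instead re-derive the needed special case from scratch: the Gaussian linearization $e^{\lambda z^2}=\E_g e^{\sqrt{2\lambda}\,zg}$ together with Tonelli collapses the m.g.f.\ of $\sum_{i}(v_i^T\xi)^2$ to a single linear functional $\langle w_g,\xi\rangle$, which correctly sidesteps the possible dependence among the coordinates $v_i^T\xi$ and yields the bound $(1-2\lambda\sigma^2)^{-d/2}$; the Chernoff/Laurent--Massart optimization then gives precisely the threshold $\sigma^2(d+2\sqrt{dt}+2t)$. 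Your route is self-contained and more elementary (it needs only the $\chi^2_1$ m.g.f.\ and the Laurent--Massart computation), and it essentially mirrors the decoupling idea behind the Hsu--Kakade--Zhang lemma itself; the paper's route is shorter and buys, for free, the general statement for arbitrary $A$, with ${\rm Tr}(\Sigma)$, ${\rm Tr}(\Sigma^2)$, $\lambda_{\max}(\Sigma)$ in place of $d$, $d$, $1$. One cosmetic point: your parenthetical suggesting that the linear-functional bound is ``equivalent'' to $\xi$ having independent $\sigma$-sub-Gaussian coordinates overstates matters (independence is sufficient, not necessary), but since the bound on linear functionals is the only distributional input you use---and it is exactly the hypothesis under which the lemma is applied in the paper---this does not affect the argument.
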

   \begin{proof}
   	We have 
   $$
   \max_{u\in B(0,1)} \,(u^T\xi)^2 = \max_{u\in B(0,1)} \,(u^T P_{\mathcal W}\xi)^2 = \|P_{\mathcal W}\xi\|_2^2
   $$
   where $P_{\mathcal W}$ is the orthogonal projector onto $\mathcal W$. Applying the following lemma with $A=P_W$ yields the result.
   \begin{lemma}[Hsu et al. \textrm{\cite{kakade}}]\label{lem:kakade0} 
   	Let $\xi$ be a $\sigma$-subgaussian random vector in $\R^n$, and let $A\in \R^{n\times n}$ be a matrix. Set $\Sigma= A^TA$.  Then,  for any $t>0$,
   	\begin{equation*}\label{1kakade0}
   	\mathbb P\big( \|A\xi\|_2^2 \ge   \sigma^2({\rm Tr}(\Sigma)+2\sqrt{{\rm Tr}(\Sigma^2)t} +2\lambda_{\max}(\Sigma)t)\big) \le e^{-t} 
   	\end{equation*}
   	where ${\rm Tr}(\Sigma)$ and $\lambda_{\max}(\Sigma)$ denote the trace and the maximal eigenvalue of $\Sigma$.
   \end{lemma}

   \end{proof}

   \subsection{Lemmas for Theorem \ref{thm:lowerbound}}
   \begin{lemma}\label{lm:KLchi} Assume that the noise variables $W_{ij}$ in model (\ref{eq:modelmc})  are  i.i.d. Gaussian with distribution
    $\mathcal{N}(0, \sigma^2)$.
  Then,  the Kullback-Leibler divergence between $\mathbb{P}_{\theta}$ and  $\mathbb{P}_{\theta'}$ has the form 
  $$KL\left(\mathbb{P}_{\theta}, \mathbb{P}_{\theta'}\right)=\frac{p}{2\sigma^2} \| \theta-\theta'\|_2^2.$$
   \end{lemma}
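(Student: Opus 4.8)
The plan is to compute the Kullback–Leibler divergence directly from the definition of the model. Recall from \eqref{eq:modelmc} that $Y_{ij} = E_{ij}(\theta_{ij}^* + \xi_{ij})$ where $E_{ij}$ are i.i.d.\ Bernoulli$(p)$, independent of the noise $(\xi_{ij})$, and here we assume the $\xi_{ij}$ (equivalently, the $W_{ij}$ on the observed entries) are i.i.d.\ $\mathcal{N}(0,\sigma^2)$. The key observation is that conditionally on the mask $E = (E_{ij})$, the entries $Y_{ij}$ are mutually independent: if $E_{ij} = 0$ then $Y_{ij} = 0$ deterministically (carrying no information about $\theta$), and if $E_{ij} = 1$ then $Y_{ij} \sim \mathcal{N}(\theta_{ij}, \sigma^2)$. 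Since the law of $E$ does not depend on $\theta$, one can use the chain rule (tensorization) for KL divergence over the mask and over coordinates.

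First I would write $\mathbb{P}_\theta$ as a mixture over the mask: $\mathbb{P}_\theta = \sum_{e \in \{0,1\}^{n\times m}} \mathbb{P}(E = e)\, \mathbb{P}_\theta(\,\cdot \mid E = e)$, and note that the mask distribution $\mathbb{P}(E=e)$ is common to $\mathbb{P}_\theta$ and $\mathbb{P}_{\theta'}$. By the joint convexity / chain rule of KL divergence for pairs that share the same mixing distribution, $KL(\mathbb{P}_\theta, \mathbb{P}_{\theta'}) = \sum_e \mathbb{P}(E=e)\, KL\big(\mathbb{P}_\theta(\cdot\mid E=e), \mathbb{P}_{\theta'}(\cdot\mid E=e)\big)$. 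Then, conditionally on $E = e$, both conditional laws factorize over coordinates, with the $(i,j)$ factor being $\delta_0$ (the same for both) when $e_{ij}=0$, and $\mathcal{N}(\theta_{ij},\sigma^2)$ versus $\mathcal{N}(\theta'_{ij},\sigma^2)$ when $e_{ij}=1$. Using additivity of KL over product measures and the standard Gaussian formula $KL(\mathcal{N}(a,\sigma^2),\mathcal{N}(b,\sigma^2)) = (a-b)^2/(2\sigma^2)$, the conditional KL equals $\frac{1}{2\sigma^2}\sum_{(i,j): e_{ij}=1}(\theta_{ij}-\theta'_{ij})^2$.

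Finally I would take the expectation over the mask: $KL(\mathbb{P}_\theta,\mathbb{P}_{\theta'}) = \frac{1}{2\sigma^2}\,\mathbb{E}_E\!\Big[\sum_{(i,j): E_{ij}=1}(\theta_{ij}-\theta'_{ij})^2\Big] = \frac{1}{2\sigma^2}\sum_{i,j}\mathbb{P}(E_{ij}=1)(\theta_{ij}-\theta'_{ij})^2 = \frac{p}{2\sigma^2}\|\theta-\theta'\|_2^2$, which is the claimed identity.

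This argument is essentially routine, so there is no serious obstacle; the only point requiring a little care is the justification that the mask $E$ may be conditioned out without contributing to the divergence — i.e.\ making precise the statement ``$Y_{ij}$ is a deterministic (hence $\theta$-independent) function of the data when $E_{ij}=0$.'' This is handled cleanly by the mixture/chain-rule decomposition above, using that $\mathbb{P}(E=e)$ does not depend on $\theta$. An alternative, equally short route is to work directly with model \eqref{model}, $Y/p = \theta^* + W$, where $W_{ij}$ restricted to the observed set has a Gaussian law with mean zero, and to note that the map $Y \mapsto Y/p$ is a bijection so the two parametrizations give the same KL; then conditioning on which entries of $W$ are ``pure noise'' versus ``noise plus $-\theta(1-p)/p$ shift'' recovers the same computation. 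Either way the result follows.
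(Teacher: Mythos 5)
The paper does not actually spell out a proof of this lemma: it is dismissed as ``straightforward and therefore omitted,'' so your argument is supplying the missing computation rather than competing with an alternative one. Your derivation is correct and reaches the stated identity by the natural route: condition on the mask, tensorize over entries, use $KL\bigl(\mathcal{N}(a,\sigma^2),\mathcal{N}(b,\sigma^2)\bigr)=(a-b)^2/(2\sigma^2)$ on observed entries, and average over $E_{ij}\sim\mathrm{Ber}(p)$.

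One point of wording deserves tightening. Joint convexity of the KL divergence only gives the inequality
$KL\bigl(\sum_e w_e P_e,\sum_e w_e Q_e\bigr)\le\sum_e w_e\,KL(P_e,Q_e)$, and the chain rule as such applies to the joint law of $(E,Y)$, not automatically to the marginal law of $Y$, which is how the paper defines $\mathbb{P}_\theta$. The equality you assert does hold here, but for a specific reason: the conditional laws of $Y$ given distinct masks are mutually singular, since a.s.\ $E_{ij}=1$ exactly when $Y_{ij}\neq 0$ (the Gaussian has no atom at zero). Hence $E$ is a.s.\ a measurable function of $Y$, the laws of $Y$ and of $(Y,E)$ have the same KL divergence, and the chain rule then yields your displayed identity exactly. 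Equivalently, and perhaps most cleanly, one can compute entrywise: with respect to the dominating measure $\delta_0+\mathrm{Leb}$, each coordinate law is $(1-p)\delta_0+p\,\mathcal{N}(\theta_{ij},\sigma^2)$, the atom at zero has the same mass $1-p$ under $\mathbb{P}_\theta$ and $\mathbb{P}_{\theta'}$ and contributes nothing, and the absolutely continuous part contributes $p(\theta_{ij}-\theta'_{ij})^2/(2\sigma^2)$; summing over $(i,j)$ gives the claim. Note also that for the only use of this lemma in the paper (the bound \eqref{eq:KLZ} feeding into Theorem 2.5 of \cite{tsy_09}), the inequality $\le$ would already suffice, as the paper's own Remark on sub-Gaussian noise indicates; but to prove the lemma as stated, with equality, you need the singularity observation above. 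With that small clarification your proof is complete.
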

   Proof of this lemma is straighforward and it is therefore omitted.
   \begin{lemma}\label{lm:select} 
   Let $k\ge 2$ and $s\ge 1$ be integers, $s\le k$. There exists a subset $S_0$ of the set of binary sequences $\{0,1\}^{k}$ such that 
   \begin{itemize}
   \item[(i)] $\log |S_0|\ge c_1^* s \log \frac{ek}{s}$,
   \item[(ii)] $c_2^*s \le \|a\|_0 \le s$ for all $a \in S_0$, and $\|a\|_0 = s$ for all $a \in S_0$ if $s\le k/2$,
   \item[(iii)] $\|a-b\|_2^2 \ge  c_3^* s$ for all $a, b \in S_0$ such that $a\ne b$,
   \end{itemize}
where $c_j^*>0$, $j=1,2,3$, are absolute constants.
   \end{lemma}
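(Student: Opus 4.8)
\textbf{Proof proposal for Lemma~\ref{lm:select}.} The plan is to construct $S_0$ probabilistically and then prune. I will treat the two regimes $s\le k/2$ and $s>k/2$ separately, since property (ii) demands the exact equality $\|a\|_0=s$ only in the first regime.

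In the case $s\le k/2$, I would draw $N$ candidate vectors $a^{(1)},\dots,a^{(N)}$ independently and uniformly at random from the set $\mathcal{B}_s=\{a\in\{0,1\}^k:\|a\|_0=s\}$, which automatically gives property (ii). For any fixed $a\ne b$ in $\mathcal{B}_s$, the quantity $\|a-b\|_2^2=2(s-|\,\text{supp}(a)\cap\text{supp}(b)\,|)$, and a routine tail estimate on the hypergeometric overlap $|\text{supp}(a)\cap\text{supp}(b)|$ shows that $\Prob(\|a-b\|_2^2< c_3^* s)\le e^{-c s\log(ek/s)}$ for suitable absolute constants (one can bound the number of $b$ within distance $c_3^* s$ of $a$ by $\binom{s}{\le c_3^* s/2}\binom{k-c_3^*s/2}{\le c_3^* s/2}$ and compare with $|\mathcal{B}_s|=\binom{k}{s}\ge (k/s)^s$). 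Taking a union bound over the $\binom{N}{2}$ pairs and choosing $N=\lceil\exp(c_1^* s\log(ek/s))\rceil$ with $c_1^*$ small enough, the probability that some pair violates property (iii) is $<1$, so a good configuration exists; I then set $S_0$ to be that configuration, possibly discarding duplicates (there are none with high probability anyway), which gives property (i).

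In the case $k/2< s\le k$, I would instead generate the coordinates of each candidate as i.i.d.\ Bernoulli$(\rho)$ with $\rho$ chosen so that the expected sparsity $\rho k$ lies in, say, $[c_2^* s, s]$ — e.g.\ $\rho=c s/k$ — and then a Chernoff bound guarantees $\|a\|_0\in[c_2^* s,s]$ for each retained vector with probability at least $1/2$, yielding (ii) after conditioning. For (iii), $\|a-b\|_2^2$ is a sum of $k$ i.i.d.\ Bernoulli$(2\rho(1-\rho))$ variables with mean of order $s$, so again $\Prob(\|a-b\|_2^2<c_3^*s)\le e^{-cs}$; since $\log(ek/s)$ is bounded in this regime, $e^{-cs}\le e^{-c s\log(ek/s)/c'}$, and the same union-bound argument over $N=\lceil\exp(c_1^*s\log(ek/s))\rceil$ candidates works. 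Pruning to keep only vectors with $\|a\|_0\in[c_2^*s,s]$ costs at most a constant factor in $\log|S_0|$, which is absorbed into $c_1^*$.

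The main obstacle is calibrating the three constants $c_1^*,c_2^*,c_3^*$ consistently: one needs $c_1^*$ small enough that the union bound $\binom{N}{2}e^{-cs\log(ek/s)}<1$ holds, yet this forces the packing number $|S_0|$ to be genuinely exponential in $s\log(ek/s)$, and simultaneously the covering estimate $\binom{s}{\le c_3^*s/2}\binom{k-c_3^*s/2}{\le c_3^*s/2}\le e^{(c_1^*+c)s\log(ek/s)}$ must leave room below $\binom{k}{s}$. This is a standard Gilbert–Varshamov-type bookkeeping exercise — essentially the same as constructing a packing of the sparse Hamming sphere — but it must be carried out carefully near the boundary cases $s=1$ and $s=k$. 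I would also double-check the trivial boundary $s=k$ (where $\mathcal{B}_s$ is a single point) and $k=2$ separately, handling them by direct inspection so that $S_0$ can be taken of constant size with the stated properties.
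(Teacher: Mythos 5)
Your proposal is correct in outline, but it takes a genuinely different route from the paper. The paper's proof is essentially a two-line reduction to known results: for $s\le k/2$ it simply invokes Lemma A.3 of \cite{RigTsy2011} (a sparse Varshamov--Gilbert-type bound that already delivers exactly the exact-sparsity packing you construct by hand), while for $k/2<s\le k$ and $k\ge 32$ it uses a deterministic embedding --- the first $m=\lceil k/4\rceil$ coordinates are left free, the last $s-m$ coordinates are set to $1$, the rest to $0$ --- and applies the classical Varshamov--Gilbert bound \cite[Lemma 2.9]{tsy_09} to the free block of length $m$; since $\log(ek/s)$ is bounded in that regime, the $2^{m/8}$ resulting points, at Hamming distance of order $k\asymp s$ and with sparsity between $s-m\ge c_2^*s$ and $s$, give (i)--(iii) at once, and the leftover cases $k<32$ are checked by inspection. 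You instead re-derive both regimes probabilistically: uniform sampling on the exact-sparsity sphere with a hypergeometric/counting tail and a union bound when $s\le k/2$, and an i.i.d.\ Bernoulli design with Chernoff bounds and pruning when $s>k/2$. Both arguments are valid; yours is self-contained but requires the constant calibration you flag, which does go through (the number of exact-$s$ neighbours of $a$ within squared distance $c_3^*s$ is at most $\sum_{t<c_3^*s/2}\binom{s}{t}\binom{k-s}{t}$, negligible relative to $\binom{k}{s}\ge (k/s)^s$ once $c_3^*$ is small, and in the dense regime $\log(ek/s)$ is bounded so $e^{-cs}$ tails beat the $N^2$ union bound for $c_1^*$ small and $s$ above an absolute constant), while the paper buys brevity by outsourcing the combinatorial core to the two cited lemmas; your direct inspection of small cases plays the same role as the paper's ``$k<32$ is obvious'' step.
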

    \begin{proof} For $s\le k/2$ the result follows from Lemma A.3 in \cite{RigTsy2011}. For $k/2<s\le k$ and $k\ge 32$, we restrict the consideration only to binary sequences in $\{0,1\}^{k}$ such that the first $m=\lceil k/4 \rceil $ elements can be either 0 or 1, the last $s-m$  elements are 1 and the remaining elements are 0. Then, (i) - (iii) follow from the Varshamov-Gilbert bound \cite[Lemma 2.9]{tsy_09} applied to the set of binary sequences of length $m$. For $k/2<s\le k$ and $k < 32$, the result is obvious.
     \end{proof}
  
  \begin{lemma} \label{lm:specialB}
   Let $\{a_1, a_2, \dots, a_N\} \subseteq \{0,1\}^{k}$.
    Let $r$ be an integer satisfying $r> 96 \log N$. Then, there exists a matrix $Q \in \{-1,1\}^{r\times k}$ such that for any $u, v \in [N]$,
   \begin{equation*}
   \frac{r}{2} \|a_u - a_v\|_2^2 \le \| Qa_u - Qa_v\|_2^2 \le \frac{3r}{2} \|a_u- a_v\|_2^2.
   \end{equation*}
   \end{lemma}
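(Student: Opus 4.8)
## Proof proposal for Lemma~\ref{lm:specialB}

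The plan is to use the probabilistic method: generate the entries of $Q$ as i.i.d.\ Rademacher random variables and show that, with positive probability, the desired two-sided bound holds simultaneously for all pairs $a_u, a_v$. First I would fix a pair $u\ne v$ and set $w = a_u - a_v \in \{-1,0,1\}^k$. Since each row $Q_{l\cdot}$ of $Q$ has i.i.d.\ Rademacher entries, the inner product $\langle Q_{l\cdot}, w\rangle$ is a Rademacher sum, and $\|Qw\|_2^2 = \sum_{l=1}^r \langle Q_{l\cdot}, w\rangle^2$ with the $r$ summands i.i.d.\ For each summand, $\mathbb{E}\langle Q_{l\cdot}, w\rangle^2 = \|w\|_2^2$, so $\mathbb{E}\|Qw\|_2^2 = r\|w\|_2^2$. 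The quantity $\langle Q_{l\cdot}, w\rangle^2/\|w\|_2^2$ is subexponential (a normalized Rademacher chaos of degree 2), so Bernstein's inequality gives a concentration bound of the form
\begin{equation*}
\mathbb{P}\left\{ \left| \|Qw\|_2^2 - r\|w\|_2^2 \right| \ge \tfrac{1}{2} r \|w\|_2^2 \right\} \le 2\exp(-cr)
\end{equation*}
for an absolute constant $c>0$; concretely one can take $c$ such that $cr > \log(N^2)$ is implied by $r > 96\log N$, which is exactly the hypothesis (choosing the Bernstein constants so that the exponent is at least $r/48 > 2\log N$).

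Next I would apply the union bound over all $\binom{N}{2} < N^2/2$ pairs $(u,v)$. The probability that the two-sided bound fails for at least one pair is at most $N^2 \exp(-cr)$, which is strictly less than $1$ under the assumption $r > 96\log N$ (with the constants arranged appropriately). Hence there exists at least one realization of $Q \in \{-1,1\}^{r\times k}$ for which
\begin{equation*}
\tfrac{r}{2}\|a_u - a_v\|_2^2 \le \|Qa_u - Qa_v\|_2^2 \le \tfrac{3r}{2}\|a_u - a_v\|_2^2
\end{equation*}
holds for all $u,v \in [N]$ simultaneously, which is the claim. Note the degenerate case $a_u = a_v$ is trivial since both sides vanish.

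The main technical point — and the only place requiring care — is establishing the subexponential tail for the degree-2 Rademacher chaos $\langle Q_{l\cdot}, w\rangle^2$ with the right numerical constants, so that the threshold $96\log N$ (rather than some larger multiple) suffices after the union bound. This is where I would either invoke a standard Hanson–Wright / decoupling inequality for Rademacher chaos, or give a self-contained moment-generating-function computation: for $|\lambda|$ small enough, $\mathbb{E}\exp(\lambda(\langle Q_{l\cdot},w\rangle^2 - \|w\|_2^2))$ can be bounded by comparison with the Gaussian case (using the fact that $\langle Q_{l\cdot}, w\rangle$ is sub-Gaussian with variance proxy $\|w\|_2^2$), yielding a bound of the form $\exp(C\lambda^2\|w\|_4^4)$ and then controlling $\|w\|_4^4 \le \|w\|_2^2$ since $w$ has entries in $\{-1,0,1\}$. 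Everything else is a routine union bound.
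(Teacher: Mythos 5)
Your proposal is correct in outline, but it takes a genuinely different route from the paper: the paper's entire proof of this lemma is a one-line citation of the Johnson--Lindenstrauss lemma for random $\pm 1$ projection matrices, namely Theorem 2 of \cite{achlioptas} with $\beta=1$ and $\epsilon=1/2$, which already gives the two-sided bound simultaneously for all pairs with probability at least $1-1/N$, hence existence. What you propose is essentially to re-prove that cited result from scratch (i.i.d.\ Rademacher $Q$, the identity $\mathbb{E}\|Qw\|_2^2=r\|w\|_2^2$ for $w=a_u-a_v$, sub-exponential concentration of each $\langle Q_{l\cdot},w\rangle^2$, union bound over fewer than $N^2/2$ pairs). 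This buys a self-contained argument at the price of tracking constants so that $r>96\log N$ suffices, which you rightly identify as the only delicate point. Two caveats. First, your stated MGF bound $\exp(C\lambda^2\|w\|_4^4)$ has the wrong scale: the centered square $\langle Q_{l\cdot},w\rangle^2-\|w\|_2^2$ has variance of order $\|w\|_2^4$, not $\|w\|_4^4$ (take $w$ the all-ones vector to see the claimed form fail), so the correct bound is $\exp(C\lambda^2\|w\|_2^4)$ for $|\lambda|\le c/\|w\|_2^2$, and the step ``$\|w\|_4^4\le\|w\|_2^2$'' becomes unnecessary. Second, the Gaussian comparison you mention is indeed the clean way to get constants good enough for $96$: since $S=\langle Q_{l\cdot},w\rangle$ is sub-Gaussian with proxy $\|w\|_2^2$, writing $\exp(\lambda S^2)=\mathbb{E}_g\exp\bigl(\sqrt{2\lambda}\,Sg\bigr)$ with $g\sim\mathcal{N}(0,1)$ and integrating over $S$ first gives $\mathbb{E}\exp(\lambda S^2)\le (1-2\lambda\|w\|_2^2)^{-1/2}$, i.e.\ chi-square--type tails; the resulting Chernoff exponents for a relative deviation of $1/2$ are at least of order $r/24$ on each side, so the union bound over $N^2$ pairs succeeds with room to spare under $r>96\log N$ (only positive probability is needed). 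With that correction your argument goes through; the paper avoids all of this bookkeeping by delegating it to \cite{achlioptas}.
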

   \begin{proof} The result follows immediately from Johnson - Lindenstrauss Lemma as stated in  \cite[Theorem 2]{achlioptas} by taking there $\beta=1$ and $\epsilon=1/2$. 
   \end{proof}

  \subsection{Lemmas for Theorem \ref{thm:oracleupper}}
  
  \begin{lemma} \label{lm:chaining}
  Let $R=\sqrt{\frac{3(\Q^2+\sigma^2)}{p} N \epsilon_0^2}$ and $\Theta^R = \left\{ \theta \in \Theta,  \| \theta - \theta_0\|_2 \ge 2^{s^{*}}R  \right\}$. Then we have
  \begin{equation*}
  \mathbb{P} \left\{ \sup_{\theta \in \Theta^R} \left( \left \langle \theta-\theta_0, W \right \rangle - \frac{1}{8} \|\theta-\theta_0 \|_2^2 \right) > 96 R^2 \right\} \le 4 \exp\left ( - \frac{\alpha R^2}{2}\right ) +  \exp(-pN/6)
  \end{equation*}
  where $\alpha = \frac{p}{6(\Q^{2}+\sigma^{2})}$.
  \end{lemma}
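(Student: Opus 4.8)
The plan is to bound the supremum of the Gaussian-type process $\theta\mapsto\langle\theta-\theta_0,W\rangle$ over the ``annulus'' $\Theta^R$ by a peeling (slicing) argument combined with a chaining bound on each slice, using the global entropy condition \eqref{eq:entropycond} to calibrate $\epsilon_0$. First I would recall that $W$ has entries $W_i=\theta_i^*(E_i-p)/p+\xi_iE_i/p$; on the event $\mathcal{E}=\{\sum_i E_i\le 2pN\}$, which has probability at least $1-\exp(-pN/6)$ by a Bernoulli-Chernoff bound, the ``effective dimension'' of the noise is controlled. Conditionally on $(E_i)$, the centred part of $W$ is sub-Gaussian with a variance proxy of order $(\Q^2+\sigma^2)/p^2$ per observed coordinate, so that for any fixed $v$ with $\|v\|_2\le 1$ the increment $\langle v,W\rangle$ is sub-Gaussian with parameter $O((\Q^2+\sigma^2)/p)\cdot(\text{number of observed coords})\le O((\Q^2+\sigma^2)\cdot 2N/p)$ on $\mathcal{E}$. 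This is exactly where the factor $\alpha=\frac{p}{6(\Q^2+\sigma^2)}$ and the quantity $R^2=\frac{3(\Q^2+\sigma^2)}{p}N\epsilon_0^2$ enter.

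Next I would set up the peeling: write $\Theta^R=\bigcup_{j\ge 0}\Theta_j$ where $\Theta_j=\{\theta\in\Theta:\ 2^{s^*+j}R\le\|\theta-\theta_0\|_2<2^{s^*+j+1}R\}$, and on $\Theta_j$ it suffices, thanks to the $\tfrac18\|\theta-\theta_0\|_2^2$ term, to show $\sup_{\Theta_j}\langle\theta-\theta_0,W\rangle\le\tfrac18(2^{s^*+j}R)^2+96R^2$ with the stated probability, then sum the failure probabilities in a geometric series dominated by $4\exp(-\alpha R^2/2)$. For a single slice, rescaling by $T_j:=2^{s^*+j+1}R$ reduces the problem to bounding $\sup\{\langle u,W\rangle:\ u\in(\Theta-\theta_0)/T_j,\ \|u\|_2\le 1\}$ by Dudley's entropy integral. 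The covering numbers $\mathcal{N}_{\delta}(\Theta_u(s_n,s_m))$ scale like those of $\Theta_1$ after rescaling by $u$ (this is why $\Theta_1$ and $\epsilon_0$ are defined as they are), and the definition of $\epsilon_0$ in \eqref{eq:entropycond} says $N\epsilon_0^2\asymp\log\mathcal{N}_{\epsilon_0}(\Theta_1)$, i.e. $\epsilon_0$ is the critical radius where the entropy integral balances; the chaining bound then produces a term of order $\sqrt{(\Q^2+\sigma^2)N\epsilon_0^2/p}\cdot T_j=R\cdot T_j$ for the expected supremum, plus a deviation term of order $\sqrt{(\Q^2+\sigma^2)/p}\cdot T_j\cdot\sqrt{t}$ at confidence $e^{-t}$. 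Choosing $t\asymp\alpha R^2\cdot 4^j$ makes the deviation term $\lesssim 4^j R^2$, absorbed by $\tfrac18 T_j^2$, while the main term $R\cdot T_j\le\tfrac{1}{16}T_j^2+CR^2$ by Young's inequality, giving the constant $96$.

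The main obstacle I expect is handling the randomness of the design $(E_i)$ cleanly inside the chaining argument: the noise $W$ is not itself sub-Gaussian unconditionally (the term $\theta_i^*(E_i-p)/p$ is bounded but heavy relative to its variance when $p$ is small), so one must work conditionally on $(E_i)$ restricted to the event $\mathcal{E}$, verify that the conditional increments $\langle\theta-\theta',W\rangle$ are sub-Gaussian with variance proxy $\frac{2(\Q^2+\sigma^2)}{p}\|\theta-\theta'\|_2^2$ uniformly, and only afterwards integrate out $(E_i)$, paying the extra $\exp(-pN/6)$. A secondary technical point is that the covering-number bound \eqref{eq:entropycond} is stated for $\Theta_1$ (the unit ball around $\theta_0$), so in the slice $\Theta_j$ one needs the scaling $\log\mathcal{N}_{\delta T_j}(\Theta_{T_j})\le\log\mathcal{N}_{\delta}(\Theta_1)$ when $T_j\ge 1$ — which holds because $\Theta_{T_j}$ is contained in a $T_j$-dilate of a translate of $\Theta_1$ intersected with $\Theta$ — and a monotonicity argument for $\delta$ below and above $\epsilon_0$ to run Dudley's integral only down to scale $\epsilon_0$; below that scale the contribution is trivially bounded by $\sqrt{N}\epsilon_0\cdot(\text{diam})$. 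Once these points are in place, the geometric summation over $j$ and the union with $\mathcal{E}^c$ give the claimed bound $4\exp(-\alpha R^2/2)+\exp(-pN/6)$.
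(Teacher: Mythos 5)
Your overall skeleton (dyadic peeling of the annulus, absorbing the supremum into the quadratic term $\tfrac18\|\theta-\theta_0\|_2^2$, geometric summation over slices, calibration through $\epsilon_0$ and $N\epsilon_0^2\asymp\log\mathcal N_{\epsilon_0}(\Theta_1)$) matches the paper, which, however, uses a one-step $\epsilon$-net on each slice (Lemma \ref{lm:empirical}) rather than a full Dudley integral. The genuine gap is in your probabilistic core: you condition on $(E_i)$ on the event $\{\sum_iE_i\le 2pN\}$ and assert that the increments $\langle\theta-\theta',W\rangle$ are then sub-Gaussian with variance proxy $\tfrac{2(\Q^2+\sigma^2)}{p}\|\theta-\theta'\|_2^2$ uniformly. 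This is not true. First, conditionally on $E$ the term $\theta_i^*(E_i-p)/p$ is a deterministic shift, not centered noise; for a fixed direction $u$ its contribution $\sum_iu_i\theta_i^*(E_i-p)/p$ is a bias which the event $\{\sum_iE_i\le2pN\}$ does not control at the required level (for $u$ proportional to the vector with coordinates $\theta_i^*(E_i-p)$ it is typically of order $\Q\|u\|_2\sqrt{pN}/p\asymp R/\epsilon_0\gg R$). Second, the centered part $\sum_iu_i\xi_iE_i/p$ has conditional variance proxy $\sigma^2\sum_iu_i^2E_i/p^2$, which can be of order $\sigma^2\|u\|_2^2/p^2$ (not $/p$) when the observed set contains the large coordinates of $u$; the event on $\sum_iE_i$ gives no per-direction control. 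The crucial factor-$p$ gain (proxy $\propto(\Q^2+\sigma^2)/p$ per coordinate) is obtained in the paper precisely by \emph{not} conditioning: Lemma \ref{lm:bernstein} bounds the unconditional MGF of $\langle a,W\rangle$, averaging over the Bernoulli randomness and treating both noise sources at once, and this Bernstein bound is only needed at the finitely many net points. With the conditional proxy you actually have, $\alpha$ is effectively replaced by something of order $p^2/(\Q^2+\sigma^2)$, the per-slice tail no longer dominates $\log\mathcal N_{\epsilon_0}(\Theta_1)\asymp N\epsilon_0^2$ for small $p$, and the union bound over the net cannot be closed.

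A secondary gap: your bound on the sub-$\epsilon_0$-scale (discretization) error by $\sqrt N\epsilon_0$ times the diameter implicitly uses Cauchy--Schwarz with $\|W\|_2$, which your event does not control: $\{\sum_iE_i\le2pN\}$ says nothing about $\sum_iE_i\xi_i^2$. The paper's event is $\mathcal E=\{\|W\|_2\le\sqrt{N/(2\alpha)}\}$, and showing $\mathbb P(\mathcal E^c)\le e^{-pN/6}$ is itself the nontrivial second half of Lemma \ref{lm:bernstein} (an MGF bound for $W_i^2$). Relatedly, your claim that a fixed unit-norm functional $\langle v,W\rangle$ is sub-Gaussian with parameter of order $(\Q^2+\sigma^2)N/p$ conflates the magnitude of $\|W\|_2^2$ with the variance proxy of a single linear functional. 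If you replace the conditioning step by the paper's unconditional Bernstein bound for $\langle a,W\rangle$ at the net points and use the $\|W\|_2$ event only for the discretization error, your peeling argument goes through essentially as in the paper.
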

  \begin{proof} [Proof of Lemma \ref{lm:chaining}]
  Let $\mathcal{E}=\{\|W\|_2 \le \sqrt{\frac{N}{2\alpha}}\}$ and for $s\geq 2$ let $ \Theta^R_{s} =\{ \theta \in \Theta^R,  2^{s} R \le \| \theta - \theta_0\|_2 \le 2^{s+1} R \} $. Then we have that
  $\Theta^{R} = \cup_{s=s^{*}}^{\infty} \Theta^R_s$. The union bound yields
  \begin{eqnarray*}
  && \mathbb{P} \left\{ \sup_{\theta \in \Theta^R} \left( \langle \theta-\theta_0, W \rangle - \frac{1}{8} \|\theta-\theta_0 \|_2^2 \right) > 96 R^2 , \mathcal{E} \right\} \\
  &\le& \sum_{s=s^{*}}^{\infty} \mathbb{P} \left\{ \sup_{\theta \in \Theta^R_s} \left( \langle \theta-\theta_0, W \rangle - \frac{1}{8}\|\theta-\theta_0 \|_2^2 \right) > 96R^2, \mathcal{E} \right\} \\
  &\le& \sum_{s=s^{*}}^{\infty} \mathbb{P} \left\{ \sup_{\theta \in \Theta^R_s} \langle \theta-\theta_0, W \rangle \ge \left( 2^{2s-3} + 96 \right)  R^2, \mathcal{E} \right\} \\
  &\le& \sum_{s=s^{*}}^{\infty} \mathbb{P} \left\{ \sup_{\theta \in \Theta_{2^{s+1}R}} \langle \theta-\theta_0, W \rangle \ge  \left( 2^{2s-3} + 96 \right) R^2, \mathcal{E} \right\}.
  \end{eqnarray*}
  where the last step is due to the fact that $\Theta_{s}^R \subseteq \Theta_{2^{s+1}R}$.
  Now we are going to apply Lemma \ref{lm:empirical} with $D=2^{s+1}R$, $\epsilon = D \epsilon_0$ and $t=(2^{2s-4}+48)R^2$. It is easy to check that $t \in [DR, D^2]$ for all $s \ge s^{*}$. Then (\ref{eq:empirical}) yields 
  $$  \sum_{s=s^{*}}^{\infty} \mathbb{P} \left\{ \sup_{\theta \in \Theta_{2^{s+1}R}} \langle \theta-\theta_0, W \rangle \ge  \left( 2^{2s-3} + 96 \right) R^2, \mathcal{E} \right\} \le \sum_{s=2}^{\infty} 2 e^{-\frac{\alpha s R^2}{4}} \le 4 e^{-\frac{\alpha R^2}{2}}.$$
  The last inequality holds when $\alpha R^2 \ge 4$. By Lemma \ref{lm:bernstein}, $P(\mathcal{E}^c) \le  \exp(-pN/6)$, and therefore we obtain the desired result. 
  \end{proof}


  \begin{lemma}\label{lm:empirical}
  Suppose $\epsilon$ satisfies $\sqrt{N} \epsilon \le 2D \sqrt{\log \mathcal{N}_\epsilon(\Theta_D)}$. Then, for $D \ge \sqrt{2\log \mathcal{N}_\epsilon(\Theta_D)/\alpha}$ and for any $t \in [D\sqrt{2\log \mathcal{N}_\epsilon(\Theta_D)/\alpha}, D^2]$, we have
  \begin{equation} \label{eq:empirical}
  \mathbb{P} \left \{ \sup_{\theta \in \Theta_D} \langle \theta - \theta_0, W \rangle \ge 2t, \mathcal{E} \right\} \le 2 \exp\left( - \frac{\alpha t^2}{2D^2} \right)
  \end{equation}
  where $\mathcal{E}=\{\|W\|_2 \le \sqrt{\frac{N}{2\alpha}}\}$. 
  \end{lemma}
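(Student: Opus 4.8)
The plan is to use a standard one-step discretization (single covering, no chaining) of the ball $\Theta_D$ at scale $\epsilon$, combined with a sub-Gaussian tail bound for the linear functional $\langle \cdot, W\rangle$ and the deterministic control of $\|W\|_2$ on the event $\mathcal E$. First I would pick an $\epsilon$-net $\mathcal N$ of $\Theta_D$ under the Frobenius norm with $|\mathcal N| = \mathcal N_\epsilon(\Theta_D)$, so that every $\theta\in\Theta_D$ can be written as $\theta = \bar\theta + (\theta-\bar\theta)$ with $\bar\theta\in\mathcal N$ and $\|\theta-\bar\theta\|_2\le\epsilon$. Then
$$
\langle \theta-\theta_0, W\rangle = \langle \bar\theta-\theta_0, W\rangle + \langle \theta-\bar\theta, W\rangle \le \langle \bar\theta-\theta_0, W\rangle + \epsilon\|W\|_2.
$$
On the event $\mathcal E = \{\|W\|_2\le\sqrt{N/(2\alpha)}\}$, the remainder term is at most $\epsilon\sqrt{N/(2\alpha)}$, and the hypothesis $\sqrt N\,\epsilon\le 2D\sqrt{\log\mathcal N_\epsilon(\Theta_D)}$ gives $\epsilon\sqrt{N/(2\alpha)}\le D\sqrt{2\log\mathcal N_\epsilon(\Theta_D)/\alpha}\le t$ (using $t\ge D\sqrt{2\log\mathcal N_\epsilon(\Theta_D)/\alpha}$). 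Hence on $\mathcal E$ it suffices to control $\sup_{\bar\theta\in\mathcal N}\langle\bar\theta-\theta_0,W\rangle$ and show it exceeds $t$ with small probability.

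For each fixed $\bar\theta\in\mathcal N$, the random variable $\langle\bar\theta-\theta_0,W\rangle$ is a linear combination of the coordinates of $W$; writing $v=\bar\theta-\theta_0$ with $\|v\|_2\le D$ (since $\bar\theta\in\Theta_D$), I would invoke the sub-Gaussian behavior of $W$ — note that under the incomplete-observation model $W_i=\theta_i^*(E_i-p)/p+\xi_iE_i/p$, so each $W_i$ is sub-exponential, but the relevant tail here is captured through the parameter $\alpha=\tfrac{p}{6(\Q^2+\sigma^2)}$ via a Bernstein-type inequality (this is exactly the normalization appearing in Lemma~\ref{lm:bernstein} and Lemma~\ref{lm:chaining}). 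Thus $\mathbb P(\langle v,W\rangle\ge t)\le \exp(-\alpha t^2/D^2)$ for $t$ in the stated range (the quadratic regime of Bernstein's inequality, valid because $t\le D^2$ and $\|v\|_2\le D$, which keeps $t/\|v\|_2^2$ small enough). A union bound over $\mathcal N$ then yields
$$
\mathbb P\Big(\sup_{\bar\theta\in\mathcal N}\langle\bar\theta-\theta_0,W\rangle\ge t,\ \mathcal E\Big)\le \mathcal N_\epsilon(\Theta_D)\exp\!\Big(-\frac{\alpha t^2}{D^2}\Big) = \exp\!\Big(\log\mathcal N_\epsilon(\Theta_D)-\frac{\alpha t^2}{D^2}\Big).
$$
Since $t\ge D\sqrt{2\log\mathcal N_\epsilon(\Theta_D)/\alpha}$ implies $\log\mathcal N_\epsilon(\Theta_D)\le \alpha t^2/(2D^2)$, the exponent is at most $-\alpha t^2/(2D^2)$, and combining the net contribution with the $\epsilon$-ball remainder (which contributes the total bound $2t$ rather than $t$, doubling the factor and hence producing the constant $2$ in front of the exponential) gives~\eqref{eq:empirical}.

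The main obstacle, and the step requiring the most care, is getting the exact constants right in the Bernstein tail bound for $\langle v, W\rangle$ so that the parameter $\alpha = \tfrac{p}{6(\Q^2+\sigma^2)}$ emerges with the claimed numerical factors: one must check that, for $t$ in the prescribed interval $[D\sqrt{2\log\mathcal N_\epsilon(\Theta_D)/\alpha},\,D^2]$, the sub-exponential tail for the sum $\sum_i v_i W_i$ (with $\|v\|_2\le D$, $\|v\|_\infty\le$ some bound implied by $\|\theta\|_\infty\le\Q$) indeed falls in the Gaussian-like regime, so that the bound $\exp(-\alpha t^2/D^2)$ holds without an extra linear term. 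Everything else — the covering argument and the union bound — is routine once this tail estimate is in place.
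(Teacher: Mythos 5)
Your proposal follows essentially the same route as the paper's own proof: an $\epsilon$-net of $\Theta_D$, the decomposition $\langle\theta-\theta_0,W\rangle=\langle\bar\theta-\theta_0,W\rangle+\langle\theta-\bar\theta,W\rangle$ with the remainder bounded by $\epsilon\|W\|_2\le D\sqrt{2\log\mathcal N_\epsilon(\Theta_D)/\alpha}\le t$ on $\mathcal E$, and then a union bound over the net using the Bernstein-type tail of Lemma~\ref{lm:bernstein} with the conditions $t\le D^2$ (quadratic regime) and $t\ge D\sqrt{2\log\mathcal N_\epsilon(\Theta_D)/\alpha}$ (to absorb $\log\mathcal N_\epsilon(\Theta_D)$), exactly as in the paper. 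The only cosmetic difference is your attribution of the factor $2$ in front of the exponential to the remainder term; in the paper that term is handled purely deterministically on $\mathcal E$ and the factor $2$ is just slack, but this does not affect correctness.
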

  \begin{proof}[Proof of Lemma \ref{lm:empirical}]
  Let $\mathcal{C}_D$ be an $\epsilon$-covering set of $\Theta_D$ under the Frobenius norm. That is, for any $\theta \in \Theta_D$, there exists $\overline{\theta} \in \mathcal{C}_D$ such that $\| \theta - \overline{\theta}\|_2 \le \epsilon$. Denote by $N \triangleq \mathcal{N}_\epsilon(\Theta_D)$ the minimum cardinality of such $\mathcal{C}_D$. This yields
  \begin{eqnarray*} 
  \langle \theta - \theta_0, W \rangle &=& \langle \overline{\theta} - \theta_0, W \rangle + \langle \theta - \overline{\theta}, W \rangle \\
  &\le& \max_{\overline{\theta} \in \mathcal{C}_D}  \langle \overline{\theta} - \theta_0, W \rangle +  \epsilon \| W \|_2, 
  \end{eqnarray*}
  where we  use Cauchy-Schwarz for the last inequality.  On the event $\mathcal{E}$, we have that  $\epsilon \| W \|_2 \le \epsilon \sqrt{\frac{N}{2\alpha}} \le D\sqrt{2\log (\mathcal{N}_\epsilon(\Theta_D))/\alpha}$. It implies
  $$  \mathbb{P} \left \{ \sup_{\theta \in \Theta_D} \langle \theta - \theta_0, W \rangle \ge 2t ,~\mathcal{E} \right\} \le \mathbb{P} \left \{ \max_{\overline{\theta} \in \mathcal{C}_D}  \langle \overline{\theta} - \theta_0, W \rangle \ge t  \right\}  $$
  for $t \ge D\sqrt{2\log \mathcal{N}_\epsilon(\Theta_D)/\alpha}$. By union bound and Lemma \ref{lm:bernstein}, the right hand side of the above inequality can be bounded from above by 
  \begin{eqnarray*}
  \sum_{\overline{\theta} \in \mathcal{C}_D} \mathbb{P} \left \{ \langle \overline{\theta} - \theta_0, W \rangle \ge t  \right\} \le \exp \left ( -\alpha \min\{t^2/D^2, t)\} + \log \mathcal{N}_\epsilon(\Theta_D)\right).
  \end{eqnarray*}
  Then the desired result (\ref{eq:empirical}) holds when $ D\sqrt{2\log N/\alpha} \le t \le D^2$.
  \end{proof}
  

 \begin{lemma} \label{lm:bernstein}
   Let $\alpha= \frac{p}{6(\Q^2+\sigma^2)}$. Then, for any $t>0$ and $a\in \mathbb{R}^{n}$, we have
   \begin{equation} \label{eq:bernestein1}
   \mathbb{P} \left\{ \langle a, W \rangle > t \right\} \le \exp \left( - \alpha \min\left\{\frac{t^2}{\|a\|^2}, \frac{\Q t}{\|a\|_\infty} \right\} \right),
   \end{equation}
   and
   \begin{equation}\label{eq:bernestein2}
   \mathbb{P} \left \{ \|W\|_2 \ge \sqrt{\frac{N}{2\alpha}} \right\} \le \exp ( - pN/6).
   \end{equation}
   \end{lemma}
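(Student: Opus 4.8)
\textit{Sketch of proof.} Both estimates are Bernstein-type tail bounds for functions of the independent zero-mean coordinates $W_i=\theta_i^*(E_i-p)/p+\xi_iE_i/p=E_i(\theta_i^*+\xi_i)/p-\theta_i^*$. The two elementary facts used throughout are that, since $E_i$ is independent of $\xi_i$ and $\mathbb{E}\xi_i=0$, the cross term vanishes and
\[
\mathbb{E}W_i^2=\frac{(\theta_i^*)^2(1-p)}{p}+\frac{\mathbb{E}\xi_i^2}{p}\le\frac{\Q^2+\sigma^2}{p},
\]
using $\mathbb{E}\xi_i^2\le\sigma^2$ (which follows from Assumption~\ref{assumption_noise}) and $\|\theta^*\|_\infty\le\Q$. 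The plan is, in each case, to produce an explicit bound on the moment generating function of the relevant functional on an appropriate range of the dual variable and then run the Chernoff argument, checking that the constants assemble into the stated form.

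For \eqref{eq:bernestein1}, fix $a$ and set $X_i=a_iW_i$. Conditioning on $E_i$ and using sub-Gaussianity of $\xi_i$ gives $\mathbb{E}e^{\lambda X_i}\le p\exp\!\bigl(\lambda a_i\theta_i^*(1-p)/p+\lambda^2a_i^2\sigma^2/(2p^2)\bigr)+(1-p)e^{-\lambda a_i\theta_i^*}$; applying $\log(1+x)\le x$ and then the elementary bound $e^w-1-w\le w^2/(2(1-|w|/3))$ for $|w|$ bounded away from $3$, together with $|\theta_i^*|\le\Q$, yields $\log\mathbb{E}e^{\lambda X_i}\le C_1\lambda^2a_i^2(\Q^2+\sigma^2)/p$ for all $|\lambda|\le\lambda_{\max}$, where $C_1$ is a small absolute constant and $\lambda_{\max}$ is of order $p/(\|a\|_\infty(\Q+\sigma))$ (this restriction is just what keeps the argument of the exponential of order one). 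Summing over $i$ and optimizing over $\lambda$ gives a Chernoff bound of the shape $\mathbb{P}\{\langle a,W\rangle>t\}\le\exp\!\bigl(-\min\{t^2/(4C_1\|a\|_2^2(\Q^2+\sigma^2)/p),\ \lambda_{\max}t/2\}\bigr)$; it remains to verify that each of the two terms dominates the corresponding term of $\alpha\min\{t^2/\|a\|_2^2,\ \Q t/\|a\|_\infty\}$, which reduces to two numerical inequalities that hold with a margin thanks to the factor $6$ in $\alpha$ (one uses $\Q(\Q+\sigma)\le\tfrac32(\Q^2+\sigma^2)$ and $\Q\le\sqrt{\Q^2+\sigma^2}$).

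For \eqref{eq:bernestein2}, observe that $N/(2\alpha)=3N(\Q^2+\sigma^2)/p\ge 3\,\mathbb{E}\|W\|_2^2$, so it suffices to control the deviation of $\|W\|_2^2$ above three times (an upper bound on) its mean. I would first peel off the Bernoulli randomness: with $S=\sum_iE_i\sim\mathrm{Bin}(N,p)$, condition on $(E_i)$; on the event that $S$ does not exceed a suitable constant times $Np$ — whose complement has exponentially small probability by the multiplicative Chernoff bound for the binomial — one writes
\[
\|W\|_2^2=\sum_{i:E_i=0}(\theta_i^*)^2+\frac1{p^2}\sum_{i:E_i=1}(m_i+\xi_i)^2,\qquad m_i=\theta_i^*(1-p),
\]
where the first sum plus the conditional expectation of the second is bounded by a multiple of $N(\Q^2+\sigma^2)/p$ using $\|\theta^*\|_\infty\le\Q$ and the bound on $S$, while the leftover $\tfrac1{p^2}\sum_{i:E_i=1}\bigl[(m_i+\xi_i)^2-m_i^2-\mathbb{E}\xi_i^2\bigr]$ is, conditionally, a sum of $S$ independent mean-zero sub-exponential variables. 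Bernstein's inequality for such sums then finishes, its variance and scale parameters being estimated via $|m_i|\le\Q$, $\mathbb{E}\xi_i^4\le 16\sigma^4$, and the chaos bound $\mathbb{E}e^{s\xi_i^2}\le(1-2s\sigma^2)^{-1/2}$ for $0\le s<1/(2\sigma^2)$; averaging over $(E_i)$ gives the claim.

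The argument involves no new ideas; its only difficulty is the bookkeeping of absolute constants needed to land exactly on the stated form — the constant $6$ in $\alpha$, the appearance of $\Q$ (rather than $\sqrt{\Q^2+\sigma^2}$) in the second term of the first minimum, and the exponent $pN/6$ in \eqref{eq:bernestein2}. Since the margins are modest, the main thing to get right is to avoid lossy steps: in the first part one should restrict the m.g.f.\ estimate to $|w|\le\tfrac23$ rather than to something smaller, and in the second part one should treat the Bernoulli and the sub-Gaussian contributions separately and use the sharp chaos bound for $\mathbb{E}e^{s\xi_i^2}$ in place of $(a+b)^2\le 2a^2+2b^2$, which loses a factor that cannot be spent.
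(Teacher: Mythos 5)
For \eqref{eq:bernestein1} your argument is essentially the paper's: you bound the moment generating function of $a_iW_i$ by integrating out $\xi_i$ conditionally on $E_i$ (sub-Gaussianity) and then handling the Bernoulli part, obtaining a quadratic log-MGF bound of the form $C_1\lambda^2a_i^2(\Q^2+\sigma^2)/p$ on a restricted range of $\lambda$, followed by the two-branch Chernoff argument; the paper does the same, only writing the Bernoulli step via $\mathbb{E}e^{u(E_i-p)}\le e^{u^2p}$ for $|u|\le1$ instead of expanding the two-point law, and your numerical checks ($\Q(\Q+\sigma)\le\tfrac32(\Q^2+\sigma^2)$, range $\lambda\lesssim p/((\Q+\sigma)\|a\|_\infty)$) do close the constants.

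For \eqref{eq:bernestein2}, however, your route (condition on $(E_i)$, control $S=\sum_iE_i$ by a binomial Chernoff bound, then apply Bernstein conditionally to the sub-exponential variables $p^{-2}[(m_i+\xi_i)^2-m_i^2-\mathbb{E}\xi_i^2]$) has a genuine gap with respect to the stated bound, and it is not mere constant bookkeeping. First, your final estimate is a sum of at least two Chernoff factors (the $S$-event and the conditional deviation); each tends to $1$ as $pN\to0$, so no choice of constants makes their sum $\le e^{-pN/6}$ uniformly in $N$ and $p$, whereas the claim is asserted (and used) for all $N,p$. Second, even for large $pN$ the budget splitting is too lossy: to get $\mathbb{P}\{S>(1+\delta)Np\}\le e^{-pN/6}$ you need $(1+\delta)\log(1+\delta)-\delta\ge1/6$, i.e.\ $\delta\approx0.65$, after which the conditional mean can exhaust all but roughly $0.35\,N(\Q^2+\sigma^2)/p$ of the total deviation allowance $2N(\Q^2+\sigma^2)/p$ in unfavorable regimes; with only $\mathbb{E}\xi_i^4\le16\sigma^4$ available for a general sub-Gaussian $\xi_i$ (and noting that $\mathbb{E}\xi_i^2$ may be much smaller than $\sigma^2$, so the uncentered linear term $s\sigma^2$ in the chaos bound $\mathbb{E}e^{s\xi_i^2}\le(1-2s\sigma^2)^{-1/2}$ does not cancel against the centering), the conditional Bernstein exponent then falls well below $pN/6$. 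The paper avoids both problems by never splitting the randomness: it bounds $\mathbb{E}\exp\{\lambda(W_i^2-\mathbb{E}W_i^2)\}\le\exp\{6\lambda^2(\Q^4+\sigma^4)/p^3\}$ directly for $\lambda(\Q^2+\sqrt2\sigma^2)\le p^2$, treating the four terms of $W_i^2-\mathbb{E}W_i^2$ jointly, and runs a single Chernoff step with $t=2(\Q^2+\sigma^2)N/p$ and $\lambda=p^2/(6(\Q^2+\sigma^2))$, which lands exactly on $e^{-pN/6}$. You should either redo the second part along those lines or accept a weaker absolute constant in the exponent and a multiplicative prefactor, which would then have to be propagated into Lemma~\ref{lm:chaining} and Theorem~\ref{thm:oracleupper}.
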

  \begin{proof}[Proof of Lemma \ref{lm:bernstein}]
    Since $W_{i}  = \theta_{i}^* \frac{E_{i} - p}{p} + \xi_{i}\frac{E_{i}}{p}$,
    for $\lambda (\sigma \vee 2\Q) \Vert a\Vert_{\infty}\le p$, we have
    $$ \bE \left (e^{\lambda a_i W_i}\right ) \le \bE \left (e^{\lambda a_i\theta^{*}_{i} \frac{E_{i} - p}{p}} e^{\frac{\lambda^2 a_i^{2}\sigma^2 E_i}{2p^2}}\right ) \le e^{\frac{\lambda^2 a_i^{2}\sigma^2}{2p}} e^{\left(\frac{\lambda a_i\theta_i^*}{p}+\frac{\lambda^2 a_i^{2} \sigma^2}{2p^2}\right)^2 p} \le  e^{\frac{3\lambda^2 a_i^{2}(\Q^2 + \sigma^2)}{2p}}.$$
    Here the second inequality is due to the fact that $\bE e^{\lambda(E_i - p)} \le e^{\lambda p}$ for $|\lambda| \le 1$.  Now, following the Chernoff argument as in the proof of Lemma \ref{lemma_bernstein_1} we get \eqref{eq:bernestein1}.
    
    To prove \eqref{eq:bernestein2} note first that the variance of $W_i$ satisfies 
    $$  \mathbb{E} W_{i} ^2 = \theta_{i}^2 \frac{1-p}{p} + \frac{\mathbb{E}\xi_{i}^2}{p} \le \frac{\Q^2+\sigma^2}{p}. $$
    Then we have
    $$ W_{i} ^2 - \mathbb{E} W_{i} ^2 = (\theta_{i}^*)^2 \frac{(E_{i} - p)(1-2p)}{p^2} +  \frac{E_{i}}{p^2} (\xi_{i}^2 - \mathbb{E} \xi_{i}^2 ) + \frac{E_{i} - p}{p^2} \mathbb{E} \xi_{i}^2 + \frac{E_{i}(E_{i} - p)}{p^2} 2 \theta_{i}^* \xi_{i} $$
    When $\lambda (\Q^2 + \sqrt{2}\sigma^2) / p^2 \le 1$, we obtain
    \begin{eqnarray*}
    \mathbb{E} \left (e^{\lambda(W_{i} ^2 - \mathbb{E} W_{i} ^2)}\right ) &\le& 
     \mathbb{E} \left (e^{\lambda(\theta_{i}^*)^2 \frac{(E_{i} - p)(1-2p)}{p^2}} e^{\frac{2\lambda^2\sigma^4 E_{i}}{p^4}} e^{\frac{\lambda\sigma^{2}(E_{i} - p)}{p^2}}e^{\frac{2\lambda^2 \Q^2 \sigma^2}{p^4}E_{ij}(E_{ij}-p)^2} \right )\\&\le&
    e^{\frac{6\lambda^2 (\Q^4+\sigma^4)}{p^3}}. 
    \end{eqnarray*}
    The Chernoff argument yields
    $$ \mathbb{P} \left\{ \sum_{i=1}^{N} \left( W_{i} ^2 - \mathbb{E} W_{i} ^2 \right) \ge t  \right\} \le \exp\left\{ - \lambda t + \frac{6\lambda^2 (\Q^4+\sigma^4)}{p^3} N \right\}.$$
    For $t = \frac{2(\Q^2+\sigma^2)}{p}N $, we choose $\lambda= \frac{p^2}{6(\Q^2+\sigma^2)}$ to get the desired result. 
    \end{proof} 
  
  \subsection{Lemmas for Proposition \ref{prop:coveringbound} }
  \begin{lemma} \label{lm:coveringB}
    For any fixed $X \in \mathbb{R}^{n \times k_n}$ and $Z\in \mathbb{R}^{m \times k_m}$, let $$\mathcal{T}_R(X,Z)=\left \{\mtheta=XBZ^T, B \in \mathbb{R}^{k_n \times k_m}, ||\theta-\theta_0||_2 \le R \right \}.$$ Then, for any $0< \epsilon \le R$
    \begin{equation*}
    \mathcal{N}_{\epsilon}\left (\mathcal{T}_R(X,Z)\right )\le \left( \frac{3R}{\epsilon} \right)^{r_n r_m}.
    \end{equation*}
    \end{lemma}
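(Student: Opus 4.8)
The plan is to prove this by the classical two-step route for covering numbers of low-dimensional sets: first show that $\mathcal{T}_R(X,Z)$ is contained in a Euclidean (Frobenius) ball of radius $R$ lying inside a linear subspace of dimension at most $r_n r_m$, and then apply the standard volumetric estimate for the covering number of a finite-dimensional ball.

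For the first step I would argue as follows. Set $V=\{XBZ^T:\ B\in\mathbb{R}^{k_n\times k_m}\}\subseteq\mathbb{R}^{n\times m}$, which is a linear subspace since it is the image of the linear map $B\mapsto XBZ^T$. Every matrix of the form $XBZ^T$ has column space contained in $\mathrm{Col}(X)$ and row space contained in $\mathrm{Col}(Z)$, so $V$ is contained in the linear space of all matrices with this property, whose dimension equals $\rank(X)\cdot\rank(Z)\le (n\wedge k_n)(m\wedge k_m)=r_nr_m$; hence $d:=\dim V\le r_nr_m$. Next, let $P_V\theta_0$ denote the orthogonal projection of $\theta_0$ onto $V$ with respect to the Frobenius inner product. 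For any $\theta=XBZ^T\in\mathcal{T}_R(X,Z)$ we have $\theta\in V$ and $\|\theta-\theta_0\|_2\le R$, so by the Pythagorean identity $\|\theta-\theta_0\|_2^2=\|\theta-P_V\theta_0\|_2^2+\|P_V\theta_0-\theta_0\|_2^2$, which gives $\|\theta-P_V\theta_0\|_2\le R$. Therefore $\mathcal{T}_R(X,Z)$ is contained in the ball $\{v\in V:\ \|v-P_V\theta_0\|_2\le R\}$, which is isometric (under the Frobenius norm) to a Euclidean ball of radius $R$ in $\mathbb{R}^{d}$ with $d\le r_nr_m$.

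For the second step I would invoke the standard fact that a ball of radius $R$ in a $d$-dimensional Euclidean space admits an $\epsilon$-net of cardinality at most $(1+2R/\epsilon)^d$; since $0<\epsilon\le R$ we have $1+2R/\epsilon\le 3R/\epsilon$, and since $d\le r_nr_m$ and $3R/\epsilon\ge 3>1$ this is at most $(3R/\epsilon)^{r_nr_m}$. Combining with the first step yields $\mathcal{N}_\epsilon(\mathcal{T}_R(X,Z))\le (3R/\epsilon)^{r_nr_m}$, as claimed.

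I do not expect a genuine obstacle: the only mildly delicate points are the dimension bound $\dim V\le r_nr_m$ (elementary linear algebra about column and row spaces of products) and the reduction from $V\cap B(\theta_0,R)$ to a ball of the same radius centered at $P_V\theta_0$ (a one-line projection argument); the volumetric covering bound is entirely standard. If one prefers to avoid the projection step, an equivalent route is to fix any $B$ with $XBZ^T\in\mathcal{T}_R(X,Z)$, translate so that this point becomes the origin of $V$, and observe that $\mathcal{T}_R(X,Z)$ is then contained in a ball of radius $2R$ in $V$, producing the bound $(1+4R/\epsilon)^{r_nr_m}$, which is slightly weaker but of the same form; the projection argument is what gives the stated constant.
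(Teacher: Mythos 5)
Your proof is correct and follows essentially the same route as the paper: identify $\{XBZ^T: B\in\mathbb{R}^{k_n\times k_m}\}$ as a linear subspace of dimension at most $r_nr_m$ and then apply the standard volumetric covering bound for a Euclidean ball (the paper cites Lemma 5.2 of Vershynin, which gives exactly the $(1+2R/\epsilon)^d\le(3R/\epsilon)^d$ estimate you use). Your projection step handling the fact that $\theta_0$ need not lie in the subspace is a detail the paper leaves implicit, and it is a welcome clarification rather than a deviation.
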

   
  \begin{proof} 
  	Note that for any fixed $X \in \mathbb{R}^{n \times k_n}$ and $Z\in \mathbb{R}^{m \times k_m}$ the set of matrices $\left \{\mtheta=XBZ^T, B \in \mathbb{R}^{k_n \times k_m}\right \}$ belongs to a linear subspace of $\mathbb{R}^{n \times m}$ of dimension at most $r_nr_m$. To see it, note that any of such matrices $\mtheta$ can be written as $\mtheta=\sum_{i=1}^{r_n}\sum_{j=1}^{r_m} a_{ij}(\mtheta) M_{ij}$ with scalars $a_{ij}(\mtheta)\in\mathbb R$ and matrices $M_{ij}\in\mathbb R^{n\times m}$. Now, applying the standard bound on the covering number of the ball in the Euclidean norm (see, e.g. Lemma 5.2 in \cite{vershynin}) we get the result of the lemma.

  \end{proof}
   \begin{lemma} \label{lm:coveringZ}
      We have the following upper bound on the  $\epsilon$-covering number of $\mathcal{A}_n$ under $\ell_{\infty}$ norm: 
       \begin{equation*}
       \mathcal{N}_\epsilon\left ( \mathcal{A}_{n} ,\|\cdot \|_{\infty}\right ) \le {\dimk_n \choose \dims_n}^n \left( \frac{1}{\epsilon}  \right)^{n\dims_n}.
       \end{equation*}
       \end{lemma}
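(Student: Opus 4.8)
The plan is to exploit the fact that both the $\ell_\infty$ norm on $\mathbb{R}^{n\times k_n}$ and the row-sparsity constraint defining $\mathcal{A}_n$ decouple over the $n$ rows (here I read $\mathcal{A}_n$ as the bounded row-$s_n$-sparse set $\widetilde{\mathcal{A}}_n$ of the general-alphabet construction; restricting the entries to an alphabet $\mathcal{D}_n$ can only decrease the covering number). Since $\|A\|_\infty=\max_{i\in[n]}\|A_{i\cdot}\|_\infty$ and $\mathcal{A}_n$ is exactly the Cartesian product of $n$ copies of the single-row set $\mathcal{R}=\{a\in\mathbb{R}^{k_n}:\ \|a\|_0\le s_n,\ \|a\|_\infty\le 1\}$, a Cartesian product of $\epsilon$-nets of the factors is an $\epsilon$-net of the product in the max-metric; hence $\mathcal{N}_\epsilon(\mathcal{A}_n,\|\cdot\|_\infty)\le \mathcal{N}_\epsilon(\mathcal{R},\|\cdot\|_\infty)^n$. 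It then suffices to prove the single-row bound $\mathcal{N}_\epsilon(\mathcal{R},\|\cdot\|_\infty)\le \binom{k_n}{s_n}(1/\epsilon)^{s_n}$.

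For this I would split $\mathcal{R}$ according to the support of its elements. As $s_n\le k_n$, every $a$ with $\|a\|_0\le s_n$ has $\supp(a)\subseteq T$ for some $T\subseteq[k_n]$ with $|T|=s_n$, so $\mathcal{R}=\bigcup_{|T|=s_n}\mathcal{R}_T$ where $\mathcal{R}_T=\{a:\ \supp(a)\subseteq T,\ \|a\|_\infty\le 1\}$, a union over $\binom{k_n}{s_n}$ index sets. Since an $\epsilon$-net of a finite union is obtained by taking the union of $\epsilon$-nets of the pieces, $\mathcal{N}_\epsilon(\mathcal{R})\le \binom{k_n}{s_n}\max_{|T|=s_n}\mathcal{N}_\epsilon(\mathcal{R}_T)$. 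Finally, each $\mathcal{R}_T$ is, under $\|\cdot\|_\infty$, isometric to the cube $[-1,1]^{s_n}$, which is $\epsilon$-covered by the product grid of mesh $\epsilon$ along each of its $s_n$ coordinates, using at most $(1/\epsilon)^{s_n}$ points (each coordinate interval $[-1,1]$ being covered by $\lceil 1/\epsilon\rceil$ intervals of half-length $\epsilon$). Multiplying the three bounds gives $\mathcal{N}_\epsilon(\mathcal{A}_n,\|\cdot\|_\infty)\le\big(\binom{k_n}{s_n}(1/\epsilon)^{s_n}\big)^n=\binom{k_n}{s_n}^n(1/\epsilon)^{ns_n}$; the degenerate case $s_n=0$, where $\mathcal{A}_n$ is a singleton, is handled by the convention that the right-hand side equals $1$.

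There is essentially no hard step here: the argument only combines the product rule and the union rule for covering numbers with the elementary covering of a box. The two points deserving a line of justification are (a) that a Cartesian product of $\epsilon$-nets is an $\epsilon$-net in the max-metric, and (b) that it suffices to range over supports of size exactly $s_n$, a smaller support being enlarged to one of size $s_n$; both are routine. The only cosmetic slack is replacing $\lceil 1/\epsilon\rceil$ by $1/\epsilon$ in the per-coordinate count, which is harmless: in every application (e.g. in the proof of Proposition \ref{prop:coveringbound}) only $\log\mathcal{N}_\epsilon$ enters, and the resulting additive $O(ns_n)$ discrepancy is absorbed into the constants.
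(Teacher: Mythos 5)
Your proof is correct and follows essentially the same route as the paper: enumerate the ${k_n \choose s_n}^n$ possible row-support patterns and then cover the at most $ns_n$ free entries, each lying in $[-1,1]$, at a cost of $(1/\epsilon)^{ns_n}$ (with the harmless ceiling slack you flag, which only affects constants in $\log\mathcal{N}_\epsilon$). If anything, your row-by-row product of $\ell_\infty$ grids is slightly cleaner than the paper's appeal to a volume-ratio covering of the unit Euclidean ball, since the free entries are only constrained entrywise by $\Vert A\Vert_{\infty}\leq 1$ and the grid argument matches exactly the $\ell_\infty$ norm appearing in the statement.
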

  
  \begin{proof} 
  Note that there are ${k_n \choose s_n } ^ n$ subsets of $\{1,\dots, k_n\}^n$ that satisfy the column sparsity constraint of $A \in \mathcal{A}_n$. For any such subset, the selected $n s_n$ entries lie in the unit Euclidean ball $\mathbb{B}_2(1)$. By the standard volume ratio argument we can find an $\epsilon-$covering set of $\mathbb{B}_2(1)$ with at most $(\frac{1}{\epsilon})^{n s_n}$ elements. Hence, the lemma follows. 
  \end{proof}
 \begin{lemma} \label{lm:matrixbound}
  Assume that $A \in \mathbb{R}^{n \times k}$, $Z \in \mathbb{R}^{m \times k}$ and  that each row of $Z$ is $s-$sparse. Then, 
  $$ \|AZ^T\|_2 \le s \sqrt{mn} \|A\|_\infty \|Z\|_\infty. $$
  \end{lemma}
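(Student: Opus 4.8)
The idea is to control $\|AZ^T\|_2$ entrywise and then sum. Fix $i\in[n]$ and $l\in[m]$. The $(i,l)$th entry of $AZ^T$ is the inner product of the $i$th row of $A$ with the $l$th row of $Z$:
$$
(AZ^T)_{il}=\sum_{j=1}^{k}A_{ij}Z_{lj}=\sum_{j\in\supp(Z_{l\cdot})}A_{ij}Z_{lj},
$$
where the second equality uses that $Z_{lj}=0$ whenever $j\notin\supp(Z_{l\cdot})$. By assumption $|\supp(Z_{l\cdot})|=\|Z_{l\cdot}\|_0\le s$, so this is a sum of at most $s$ terms, each of absolute value at most $\|A\|_\infty\|Z\|_\infty$. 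Hence the triangle inequality gives
$$
|(AZ^T)_{il}|\le s\,\|A\|_\infty\|Z\|_\infty
$$
uniformly over $i\in[n]$, $l\in[m]$.

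\textbf{Assembling the Frobenius norm.} Squaring and summing over the $nm$ entries,
$$
\|AZ^T\|_2^2=\sum_{i=1}^{n}\sum_{l=1}^{m}(AZ^T)_{il}^2\le nm\,\big(s\,\|A\|_\infty\|Z\|_\infty\big)^2=s^2mn\,\|A\|_\infty^2\|Z\|_\infty^2,
$$
and taking square roots yields $\|AZ^T\|_2\le s\sqrt{mn}\,\|A\|_\infty\|Z\|_\infty$, as claimed.

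\textbf{Main obstacle.} There is essentially none: the only point requiring care is the very first step, namely using the row-sparsity of $Z$ to reduce the inner product defining each entry of $AZ^T$ to a sum of at most $s$ nonzero terms before applying the crude bound $|A_{ij}Z_{lj}|\le\|A\|_\infty\|Z\|_\infty$. Everything else is the elementary relation between the sup-norm of the entries and the Frobenius norm on an $n\times m$ matrix.
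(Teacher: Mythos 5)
Your proof is correct and follows essentially the same route as the paper: bound each entry $(AZ^T)_{il}$ by $s\,\|A\|_\infty\|Z\|_\infty$ using the row-sparsity of $Z$, then sum the squares of the $nm$ entries and take square roots. You merely make explicit the support argument that the paper leaves implicit in its one-line display.
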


  \begin{proof} 
We have
  \begin{eqnarray*}
  \|AZ^T\|_2^2 = \sum_{i \in [n]} \sum_{j \in [m]} \left( \sum_{l \in [k]} A_{il} Z_{jl}  \right)^2  &\le &\sum_{i \in [n]} \sum_{j \in [m]} \left( s \|A\|_\infty \|Z\|_\infty \right)^2 \\&\le& s^2 nm \|A\|_\infty^2 \|Z\|_\infty^2.
  \end{eqnarray*}
  \end{proof}
  \subsection{Lemmas for Theorem \ref{thm_mc_bic}}
   
%
%


  
   \begin{lemma}\label{rip_sampling}
    Assume that $n\,m\log\left (3\sqrt{nm}\right )\geq 6\log \left (k_n\,k_m\right )$. Then, with probability larger then $1-2\exp(-d/6)-\exp(-pnm)$
    \begin{align*}
    \underset{\theta\in\mathcal{X}_{\nu}}{\sup}\dfrac{ \frac{p}{2}\Vert {\theta}-\theta^{*}\Vert^{2}_{2}-\Vert  \theta_{\Omega}-\theta^{*}_{\Omega}\Vert ^{2}_{2}}{\Q^{2} R(\theta)}\leq 4
    \end{align*}
  where $R(\theta)$ is defined in \eqref{def_R}. 
    \end{lemma}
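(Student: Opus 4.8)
The plan is to rewrite the assertion as a one‑sided (lower) deviation bound for the masked squared Frobenius norm and to prove it by peeling, combined with Bernstein's inequality and the covering‑number bounds of Proposition~\ref{prop:coveringbound}. Put $D=\theta-\theta^{*}$ and note that $\|D_{\Omega}\|_2^2=\sum_{i,j}E_{ij}D_{ij}^2$ has mean $p\|D\|_2^2$, so the claim is that $p\|D\|_2^2-\|D_{\Omega}\|_2^2\le\tfrac12 p\|D\|_2^2+4\Q^2R(\theta)$ uniformly over $\theta\in\mathcal X_{\nu}$. Every $\theta\in\mathcal X$ obeys $\|\theta\|_{\infty}\le\Q$, and we may assume $\|\theta^{*}\|_{\infty}\le\Q$ as well (in the application to Theorem~\ref{thm_mc_bic} one reduces to this case, since replacing $\theta^{*}$ by its entrywise clipping to $[-\Q,\Q]$ does not increase $\inf_{\theta\in\mathcal X}\|\theta-\theta^{*}\|_2$), so that $\|D\|_{\infty}\le2\Q$. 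Hence each term $(E_{ij}-p)D_{ij}^2$ is bounded by $4\Q^2$ in absolute value and $\sum_{i,j}\Var\big((E_{ij}-p)D_{ij}^2\big)\le p\|D\|_{\infty}^2\|D\|_2^2\le4p\Q^2\|D\|_2^2$.

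I would slice $\mathcal X_{\nu}$ along the sparsity level $(s_n,s_m)\in[k_n]\times[k_m]$, on which $R(\theta)\equiv R(s_n,s_m)$ is constant, and along dyadic shells $\mathcal V_{s_n,s_m,s}=\{\theta\in\widetilde\Theta(s_n,s_m):2^{s}\nu<\|\theta-\theta^{*}\|_2\le2^{s+1}\nu\}$ for $s\ge7$ (the shell index starts at $7$ because $\mathcal X_{\nu}$ omits $\|D\|_2\le2^{7}\nu$). It then suffices to prove that, for each such triple and with probability at least $1-\exp(-cR(s_n,s_m))-\exp(-pnm/3)$,
\[
\sup_{\theta\in\mathcal V_{s_n,s_m,s}}\big(p\|D\|_2^2-\|D_{\Omega}\|_2^2\big)\ \le\ \tfrac12\,p\,2^{2s}\nu^2+4\Q^2R(s_n,s_m);
\]
a union bound over the $\le k_nk_m$ sparsity levels and a geometric sum over $s\ge7$ then give the lemma, the residual probability collapsing to $O\big(\exp(-d/6)+\exp(-pnm)\big)$ because $R(s_n,s_m)\ge d$ and, by the standing hypotheses $nm\log(3\sqrt{n\wedge m})\ge6\log(k_nk_m)$ and $d\ge10$, the factor $k_nk_m$ is absorbed. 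Note also that on shells where $p\|D\|_2^2\le 4\Q^2R(s_n,s_m)$ the displayed inequality is \emph{deterministic} (since the left side is $\le p\|D\|_2^2$), so the probabilistic argument is only needed on the ``large'' shells.

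On a single (large) shell I take a minimal $\epsilon$-net $\mathcal N$ in Frobenius distance of $\widetilde\Theta(s_n,s_m)\cap\{\|\theta-\theta^{*}\|_2\le2^{s+1}\nu\}$, with $\epsilon$ a small multiple of $p\,2^{s}\nu$; the covering estimates underlying Proposition~\ref{prop:coveringbound}, applied with radius $u=2^{s+1}\nu$ (so that $u/\epsilon$ is a fixed multiple of $1/p$), give $\log|\mathcal N|\le cR(s_n,s_m)$. For a net point $\bar\theta$ (with $\bar D=\bar\theta-\theta^{*}$, $\|\bar D\|_{\infty}\le2\Q$), Bernstein's inequality in the ``fraction‑of‑the‑mean slack'' form yields
\[
\Prob\Big(p\|\bar D\|_2^2-\|\bar D_{\Omega}\|_2^2>\tfrac1{16}p\|\bar D\|_2^2+x\Big)\ \le\ \exp\big(-c\,x/\Q^2\big),
\]
the cross term in the Bernstein exponent $t^2/(v+Mt)$ removing the shell‑dependent scale $\|\bar D\|_2^2$; choosing $x\asymp\Q^2R(s_n,s_m)$ and taking a union bound over $\mathcal N$ leaves a spare factor $\exp(-cR(s_n,s_m))$. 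For a general $\theta$ on the shell with nearest net point $\bar\theta$, Cauchy--Schwarz gives $\big|(p\|D\|_2^2-\|D_{\Omega}\|_2^2)-(p\|\bar D\|_2^2-\|\bar D_{\Omega}\|_2^2)\big|\le\|\theta-\bar\theta\|_2\,\|D+\bar D\|_2\le 8\epsilon\,2^{s}\nu$, which for $\epsilon$ small enough and $\|D\|_2>2^{s}\nu$ is at most $\tfrac18 p\|D\|_2^2$; combining this with $\|\bar D\|_2\le\|D\|_2+\epsilon$ and $\|D\|_2^2\ge2^{2s}\nu^2$ closes the shell estimate. Equivalently, the net‑plus‑Bernstein step on each shell may be replaced by a Massart‑type concentration of $\sup_{\theta}|p\|D\|_2^2-\|D_{\Omega}\|_2^2|$ around its mean together with a symmetrization/contraction bound on that mean; this is the route in which the $\exp(-pnm)$ term, intrinsic to the randomness of $\Omega$, appears most transparently.

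I expect the main obstacle to be the calibration that makes $\log|\mathcal N_{\epsilon}|$ reproduce $R(s_n,s_m)$ with exactly the logarithmic weights $\log(6\sqrt{n\wedge m})$, $\log(k_ns_m(n\wedge m))$ and $\log(9\sqrt{n\wedge m})$ appearing in its definition~\eqref{def_R_s}: this requires taking the minimum of the four covering bounds of Proposition~\ref{prop:coveringbound} at the right polynomial scale, checking that the $B_{max}$‑ and $(s_n,s_m)$‑dependent factors inside those logarithms are swallowed by an absolute constant, and then tracking the interplay between the slack fractions in the Bernstein step, the net cardinality, and the doubly indexed union bound carefully enough to land on the constant $4$ (and the stated probability $\exp(-d/6)+\exp(-pnm)$). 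Once the slicing, the net scale, and these constants are fixed, the Bernstein and fluctuation steps themselves are routine.
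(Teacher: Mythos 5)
Your overall architecture — slicing by sparsity level, dyadic peeling starting at $2^{7}\nu$, an $\epsilon$-net on each shell, a Bernstein bound with a fraction-of-the-mean slack at net points, and a Cauchy--Schwarz transfer from the shell to the net — is the same as the paper's, and your Bernstein step and peeling bookkeeping are fine. The genuine gap is the calibration of the net, which is forced by your transfer step. Because you pass from $\theta$ to its net point using only the deterministic bound $|p-E_{ij}|\le 1$, you must take $\epsilon$ of order $p\,2^{s}\nu$, i.e.\ $u/\epsilon\asymp 1/p$. At that resolution the bounds of Proposition \ref{prop:coveringbound} contain terms such as $r_nr_m\log(9u/\epsilon)\asymp r_nr_m\log(1/p)$ and $nr_m\log(6u/\epsilon)\asymp nr_m\log(1/p)$, whereas $R(s_n,s_m)$ in \eqref{def_R_s} only carries the weights $\log(6\sqrt{n\wedge m})$, $\log(9\sqrt{n\wedge m})$ and $\log(k_ns_m(n\wedge m))$. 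So your claim that $\log|\mathcal N|\le c\,R(s_n,s_m)$ with an absolute constant $c$ is false in general: nothing in the lemma prevents $\log(1/p)\gg\log(n\wedge m)$ (take, e.g., $m,k_m=O(1)$ and $p$ polynomially small in $n$, which still makes $e^{-pnm}$ negligible); in that regime the entropy of your net exceeds $R(s_n,s_m)$ by an unbounded factor $\asymp\log(1/p)$. This excess can be absorbed neither into the constant $4$ nor into the slack $\frac{p}{2}\Vert\theta-\theta^{*}\Vert_2^2$, which on the smallest shells is only of order $\Q^{2}\,2^{14}\mathcal{I}$ with $\mathcal{I}=\min_{(s_n,s_m)}R(s_n,s_m)$, so the stated bound $4\Q^{2}R(\theta)$ is out of reach along your route.

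The missing idea is to use the randomness of $\Omega$ already in the transfer step. The paper works on the event $\mathcal{E}_{2}=\{\sum_{ij}(E_{ij}-p)^{2}\le 3pnm\}$, which has probability at least $1-e^{-pnm}$ by Lemma \ref{lemma_bernstein_bernoulli} — this is exactly where the $e^{-pnm}$ term in the statement comes from, not a Massart-type alternative. Writing $(\theta^{*}-\theta)^{2}_{ij}-(\theta^{*}-G)^{2}_{ij}=(G-\theta)_{ij}(2\theta^{*}-G-\theta)_{ij}$ and using Cauchy--Schwarz with the $\ell_\infty$ bound on $2\theta^{*}-G-\theta$, the transfer error is at most of order $\Q\sqrt{pnm}\,\Vert\theta-G\Vert_2$ on $\mathcal{E}_{2}$ (the event replaces the worst-case $\sqrt{nm}$ by $\sqrt{3pnm}$). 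This permits the much coarser resolution $\epsilon=2^{-S}r$ with $2^{-S}\asymp\sqrt{\mathcal{I}/(nm)}$, for which $\log(u/\epsilon)\lesssim\log\sqrt{nm/\mathcal{I}}\le\log\sqrt{n\wedge m}$, and Lemma \ref{lemma_control_covering} then gives $\log N_S\le R(s_n,s_m)$ with constant one — which is what makes the union bound, the Bernstein slack and the constant $4$ compatible. Two further points you should make explicit if you repair the argument: the net points must be projected onto $\{\theta:\Vert\theta\Vert_\infty\le\Q\}$ so that the exponential-moment step applies to them (the covering sets behind Proposition \ref{prop:coveringbound} are not automatically uniformly bounded), and a reduction to $\Vert\theta^{*}\Vert_{\infty}\le\Q$, which you assume, is indeed needed for those moment bounds (the paper uses it implicitly).
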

  \begin{proof}
    Let $\mathcal{E}_{2}=\left \{ \sum_{ij} (E_{ij}-p)^{2}\le 3pnm\right \}$. Lemma \ref{lemma_bernstein_bernoulli} implies that $\mathbb{P}(\mathcal{E}_{2})\geq 1-\exp\left (-pnm\right )$. Using the definition of $\mathcal{X}_{\nu}$ we have that
   \begin{align}\label{rip_2}
  & \mathbb{P}\left \{\underset{\theta\in\mathcal{X}_{\nu}}{\sup}\dfrac{ \frac{p}{2}\Vert {\theta}-\theta^{*}\Vert^{2}_{2}-\Vert  \theta_{\Omega}-\theta^{*}_{\Omega}\Vert ^{2}_{2}}{\Q^{2} R(\theta)}\geq 4, \mathcal{E}_{2}\right \}
  \leq \sum_{s_n=1}^{k_n}\sum_{s_m=1}^{k_m} \mathrm I_{s_n,s_m}
  \end{align} 
  where
  $$
   \mathrm I_{s_n,s_m} = {\mathbb{P}\left \{\underset{\theta\in \widetilde\Theta^{\nu}(s_n,s_m)}{\sup}\frac{p}{2}\Vert {\theta}-\theta^{*}\Vert^{2}_{2}-\Vert  \theta_{\Omega}-\theta^{*}_{\Omega}\Vert ^{2}_{2}\geq 4\, \Q^{2}R(s_n,s_m), \mathcal{E}_{2}\right \}}
  $$
   and $\widetilde\Theta^{\nu}(s_n,s_m)\triangleq \{\theta \in \widetilde\Theta(s_n,s_m), \|\theta - \theta^{*}\|_2 > 2^{6}\nu\}$. In order to bound ${\mathrm I}_{s_n,s_m}$ from above, we use a standard peeling argument. Let $\mu=2$. For $l\in\mathbb N$ set $$S_l=\left \{\theta\in \widetilde\Theta(s_n,s_m)\,:\,\mu^{l}\nu \leq \Vert \theta-\theta^{*}\Vert _2\leq \mu^{l+1}\nu\right \}.$$
  Then, $\widetilde\Theta^{\nu}(s_n,s_m)=\underset{l=7}{\overset{\infty}{\cup}}S_{l}$ and the union bound yields
    \begin{align}\label{rip_1}
  \mathrm I_{s_n,s_m}&\leq  \sum_{l=7}^{\infty}\mathbb{P}\left \{\underset{\theta\in S_{l}}{\sup}\frac{p}{2}\Vert {\theta}-\theta^{*}\Vert^{2}_{2}-\Vert  \theta_{\Omega}-\theta^{*}_{\Omega}\Vert ^{2}_{2}\geq 4\, \Q^{2}R(s_n,s_m), \mathcal{E}_{2}\right \}\nonumber\\
  &\leq  \sum_{l=7}^{\infty}\mathbb{P}\left \{\underset{\theta\in \widetilde\Theta_{r}(s_n,s_m)}{\sup}p\Vert {\theta}-\theta^{*}\Vert^{2}_{2}-\Vert  \theta_{\Omega}-\theta^{*}_{\Omega}\Vert ^{2}_{2}\geq 4\, \Q^{2}R(s_n,s_m)+\frac{pr^{2}}{2\mu^{2}}, \mathcal{E}_{2}\right \}
    \end{align}
  where $r= \mu^{l+1}\nu$. 
 We have that $\mathbb{E}\left (E_{ij}\left (\theta-\theta^{*}\right )^{2}_{ij}\right )=p\left (\theta-\theta^{*}\right )^{2}_{ij}$ and
    \begin{align*}
    p\Vert {\theta}-\theta^{*}\Vert^{2}_{2}-\Vert \theta_{\Omega}-\theta^{*}_{\Omega}\Vert ^{2}_{2}=\sum_{(ij)}(p-E_{ij})\left (\theta-\theta^{*}\right )^{2}_{ij}. 
    \end{align*}
    Let $S=\min\left \{s\geq 2\;:\;2^{-s}\leq \sqrt{\frac{\mathcal{I}}{nm}}\right \}$ and 
    $\{\widetilde G^{S}_{j}\}^{N_s}_{j=1}$ be a minimal $2^{-S}r$-covering set of $\Theta_{r}(s_n,s_m)$ in Frobenius norm given by Proposition \ref{prop:coveringbound}. Let $G^{S}_{j}=\Pi(\widetilde G^{S}_{j})$ where $\Pi$ is the projection operator under the Frobenius norm into the set $$\mathcal{B}=\{\theta\in \mathbb R^{n\times m}\;:\; \Vert \theta\Vert_{\infty}\leq \Q\}.$$
  As $\mathcal{B}$ is closed and convex, $\Pi$ is non-expansive and we have that for any $\theta\in \widetilde\Theta_{r}(s_n,s_m)$, $\Vert \theta - G^{S}_{j}\Vert_{2}\leq \Vert \theta - \widetilde G^{S}_{j}\Vert_{2}$.
   Then, there exists $G_{\theta}^{S}\in \{G^{S}_{j}\}^{N_s}_{j=1}$ such that on the event $\mathcal{E}_{2}$
  \begin{align*}
   \left \vert\sum_{(ij)}(p-E_{ij})\left [\left (\theta^{*}-\theta\right )^{2}_{ij}-\left (\theta^{*}-G_{\theta}^{S}\right )^{2}_{ij}\right ]\right \vert\leq \frac{pr^{2}}{4\mu^{2}}
  \end{align*}
  where we use $\left (\theta^{*}-\theta\right )^{2}_{ij}-\left (\theta^{*}-G_{\theta}^{S}\right )^{2}_{ij}=\left ( G_{\theta}^{S}-\theta\right )_{ij}\left ( 2\theta^{*}-G_{\theta}^{S}-\theta\right )_{ij}$, Cauchy-Schwarz inequality and $\frac{pr}{16\Q\mu^{2}\sqrt{3pnm}}\geq \sqrt{\frac{\mathcal{I}}{nm}}$.
  So it suffices to prove the exponential inequality for
  \begin{align*}
  \mathbb{P}\left \{\underset{k=1,\dots,N_{S}}{\max}\sum_{(ij)}(p-E_{ij})(\theta^{*}-G^{S}_{k})^{2}_{ij}\geq 4\, \Q^{2}R(s_n,s_m)+\frac{pr^{2}}{4\mu^{2}}\right \}.
  \end{align*}
  We apply Markov's inequality. Set $t= R(s_n,s_m)+\frac{pr^{2}}{16\Q^{2}\mu^{2}}$
  \begin{align}\label{rip_3}
    &\mathbb{P}\left \{\underset{k=1,\dots,N_{S}}{\max}\dfrac{1}{4\Q^{2}}\sum_{(ij)}(p-E_{ij})(\theta^{*}-G^{S}_{k})^{2}_{ij}\geq t\right \}\nonumber\\&\hskip 0.5 cm\leq e^{-t}\E \exp\left (\underset{k=1,\dots,N_{S}}{\max}\dfrac{1}{4\Q^{2}}\sum_{(ij)}(p-E_{ij})(\theta^{*}-G^{S}_{k})^{2}_{ij}\right )\nonumber\\&\hskip 1 cm \leq e^{-t}\sum_{k=1}^{N_{S}}\E \exp\left (\dfrac{1}{4\Q^{2}}\sum_{(ij)}(p-E_{ij})(\theta^{*}-G^{S}_{k})^{2}_{ij}\right )
   \nonumber \\&\hskip 1.5 cm \leq e^{-t}\sum_{k=1}^{N_{S}}\prod_{(ij)}\E \exp\left ((p-E_{ij})\dfrac{(\theta^{*}-G^{S}_{k})^{2}_{ij}}{4\Q^{2}}\right ).
    \end{align}
  Now we use the following lemma that follows easily from \cite[p.22]{Hoeffding}, see also \cite{gao2015optimal}:
  \begin{lemma}\label{lm:exp_bernoulli}
  Let $E\sim \mathrm{Ber}(p)$, then for any $\vert \lambda\vert\leq 1$ we have
  \begin{equation*}
  \E \exp\{\lambda(p-E)\}\leq e^{\lambda^{2}p}.
  \end{equation*}
  \end{lemma}
  Lemma \ref{lm:exp_bernoulli} and \eqref{rip_3} imply
    \begin{align*}
        &\mathbb{P}\left \{\underset{k=1,\dots,N_{S}}{\max}\dfrac{1}{4\Q^{2}}\sum_{(ij)}(p-E_{ij})(\theta^{*}-G^{S}_{k})^{2}_{ij}\geq t\right \}
        \\&\hskip 2 cm \leq e^{-t}\sum_{k=1}^{N_{S}}\prod_{(ij)} \exp\left (\dfrac{p(\theta^{*}-G^{S}_{k})^{4}_{ij}}{16\Q^{4}}\right )\\&\hskip 2 cm\leq   \exp\left (\dfrac{pr^{2}}{2^{8}\Q^{2}}+\log(N_{S})-t\right )\leq \exp\left (-\dfrac{3pr^{2}}{64\mu^{2}\Q^{2}}\right )
        \end{align*}
    where we use $S\geq 3$ and Lemma \ref{lemma_control_covering} which implies $\log(N_{S})\leq R(s_n,s_m)$. Putting this last bound into \eqref{rip_1} and \eqref{rip_2} we get
    \begin{align*}
     &\mathbb{P}\left \{\underset{\theta\in\mathcal{X}_{\nu}}{\sup}\dfrac{ \frac{p}{2}\Vert {\theta}-\theta^{*}\Vert^{2}_{2}-\Vert  \theta_{\Omega}-\theta^{*}_{\Omega}\Vert ^{2}_{2}}{\Q^{2} R(\theta)}\geq 4, \mathcal{E}_{2}\right \}\\&\hskip 0.5 cm\leq \sum_{s_n=1}^{k_n}\sum_{s_m=1}^{k_m}\sum_{l=6}^{\infty} \exp\left (-\dfrac{3l\log(\mu)\mathcal{I}}{32}\right )\\&\hskip 1 cm\leq 2\sum_{s_n=1}^{k_n}\sum_{s_m=1}^{k_m}\exp\left (-\dfrac{\mathcal{I}}{3}\right )\leq 2\exp\left (-\dfrac{\mathcal{I}}{6}\right )
    \end{align*}
    where we used $e^{x}\geq x$ and $\nu^{2}=\frac{\left (\sigma\vee\Q\right )^{2}}{p}\mathcal{I}$. In the last inequality we use condition $n\,m\log (3\sqrt{n\wedge m})\geq 6\log \left (k_n\,k_m\right )$ which implies $\frac{\mathcal{I}}{6}\geq \log \left (k_n\,k_m\right )$. Finally, $\mathcal{I}=\underset{(s_n,s_m)}{\min} R(s_n,s_m)\geq (n+m)$ implies the result of Lemma \ref{rip_sampling}.
  \end{proof} 
   
 
       \begin{lemma}\label{lemma_bernstein_bernoulli}
         Let $a=(a_{ij})\in\mathbb{R}^{nm}$. Then, for any $t>0$ we have
         \begin{align}\label{eq_bernstein_bernoulli_1}
         \mathbb{P} \left\{ \sum_{ij} (E_{ij}-p)a_{ij} > t \right\} \le \exp \left( -  \min\left\{\frac{t^2}{4p\|a\|_2^2}, \frac{t}{2\|a\|_\infty} \right\} \right),
         \end{align}
         and 
         \begin{equation}\label{eq_bernstein_bernoulli_2}
         \mathbb{P} \left \{ \sum_{ij} (E_{ij}-p)^{2} \ge 3pnm \right\} \le \exp ( -pnm).
         \end{equation}
         \end{lemma}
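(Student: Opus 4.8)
The plan is to prove the two inequalities separately: \eqref{eq_bernstein_bernoulli_1} by the Chernoff method using the moment generating function of a centred Bernoulli variable, and \eqref{eq_bernstein_bernoulli_2} by reducing it to \eqref{eq_bernstein_bernoulli_1}.

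For \eqref{eq_bernstein_bernoulli_1}, I would start from Lemma~\ref{lm:exp_bernoulli} (equivalently, from a one-line direct computation of the Bernoulli moment generating function). Since the $E_{ij}$ are independent, for every $\lambda$ with $|\lambda|\,\|a\|_\infty\le 1$ one has $|\lambda a_{ij}|\le 1$ for all $i,j$, so
\[
\mathbb{E}\exp\!\Bigl(\lambda\sum_{ij}(E_{ij}-p)a_{ij}\Bigr)=\prod_{ij}\mathbb{E}\exp\bigl(\lambda a_{ij}(E_{ij}-p)\bigr)\le\prod_{ij}e^{\lambda^{2}a_{ij}^{2}p}=e^{\lambda^{2}p\|a\|_2^{2}}.
\]
Markov's inequality then yields $\mathbb{P}\bigl(\sum_{ij}(E_{ij}-p)a_{ij}>t\bigr)\le\exp\bigl(-\lambda t+\lambda^{2}p\|a\|_2^{2}\bigr)$ for every $\lambda\in(0,1/\|a\|_\infty]$, and the remaining task is to optimize the exponent over this admissible range. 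The unconstrained minimizer is $\lambda_{*}=t/(2p\|a\|_2^{2})$, and I would split into two cases. If $\lambda_{*}\le 1/\|a\|_\infty$, i.e. $p\|a\|_2^{2}\ge t\|a\|_\infty/2$, then choosing $\lambda=\lambda_{*}$ gives the exponent $-t^{2}/(4p\|a\|_2^{2})$; otherwise $p\|a\|_2^{2}<t\|a\|_\infty/2$, and choosing $\lambda=1/\|a\|_\infty$ gives the exponent $-t/\|a\|_\infty+p\|a\|_2^{2}/\|a\|_\infty^{2}<-t/(2\|a\|_\infty)$. In either case the bound is at most $\exp\bigl(-\min\{t^{2}/(4p\|a\|_2^{2}),\,t/(2\|a\|_\infty)\}\bigr)$, which is \eqref{eq_bernstein_bernoulli_1}.

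For \eqref{eq_bernstein_bernoulli_2}, I would first dispose of the range $p\ge 1/2$: there every summand satisfies $(E_{ij}-p)^{2}\le\max\{p^{2},(1-p)^{2}\}=p^{2}$, hence $\sum_{ij}(E_{ij}-p)^{2}\le p^{2}nm<3pnm$ deterministically and the probability is $0$. For $p<1/2$ I would use the identity $(E_{ij}-p)^{2}=(1-2p)(E_{ij}-p)+p(1-p)$, valid because $E_{ij}\in\{0,1\}$, to write $\sum_{ij}(E_{ij}-p)^{2}=(1-2p)\sum_{ij}(E_{ij}-p)+p(1-p)nm$. Dividing by $1-2p>0$ gives
\[
\Bigl\{\textstyle\sum_{ij}(E_{ij}-p)^{2}\ge 3pnm\Bigr\}=\Bigl\{\textstyle\sum_{ij}(E_{ij}-p)\ge\tfrac{(2+p)p}{1-2p}\,nm\Bigr\}\subseteq\Bigl\{\textstyle\sum_{ij}(E_{ij}-p)\ge 2pnm\Bigr\},
\]
the last inclusion holding since $(2+p)/(1-2p)\ge 2$ for $p<1/2$. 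Applying \eqref{eq_bernstein_bernoulli_1} with $a_{ij}\equiv 1$, so that $\|a\|_2^{2}=nm$ and $\|a\|_\infty=1$, and $t=2pnm$ (the strict-versus-non-strict inequality being immaterial), the exponent is $\min\{(2pnm)^{2}/(4pnm),\,pnm\}=pnm$, which is \eqref{eq_bernstein_bernoulli_2}. Both parts are elementary; the only points that need a little care are the constrained optimization over $\lambda$ in the first part (staying within $|\lambda|\le 1/\|a\|_\infty$ and splitting into the two regimes) and, in the second part, tracking the sign of $1-2p$ so that the event lands in the form already covered by \eqref{eq_bernstein_bernoulli_1}.
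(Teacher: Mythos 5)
Your proof is correct, but it takes a somewhat different route from the paper's. For \eqref{eq_bernstein_bernoulli_1}, the paper simply invokes Bernstein's inequality as a black box, after noting $\E \sum_{ij}(E_{ij}-p)^{2}a_{ij}^{2}\le p(1-p)\|a\|_2^2$ and $\vert (E_{ij}-p)a_{ij}\vert\le\|a\|_\infty$; you instead rederive the bound from the Bernoulli moment generating function estimate (Lemma~\ref{lm:exp_bernoulli}) via an explicit constrained Chernoff optimization over $\lambda\in(0,1/\|a\|_\infty]$, and your two-regime case analysis is carried out correctly, so this part is a self-contained version of essentially the same argument. The more substantive difference is in \eqref{eq_bernstein_bernoulli_2}: the paper applies Bernstein a second time, now to the centred squares $(E_{ij}-p)^{2}-p(1-p)$, bounding their variance through the fourth moment $\E\sum_{ij}(E_{ij}-p)^4\le p(1-p)nm$ and their magnitude by $1$, and then takes $t=2pnm$; you avoid this second application entirely through the exact identity $(E_{ij}-p)^{2}=(1-2p)(E_{ij}-p)+p(1-p)$, valid for $\{0,1\}$-valued variables, which reduces the quadratic statistic to the linear one already covered by \eqref{eq_bernstein_bernoulli_1}. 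Your reduction requires a case split on the sign of $1-2p$ (with the range $p\ge 1/2$ disposed of by the deterministic bound $(E_{ij}-p)^{2}\le p^{2}$), and you handle this, as well as the strict-versus-nonstrict inequality issue when plugging $t=2pnm$ into \eqref{eq_bernstein_bernoulli_1}, correctly. What your route buys is that no separate moment computation for the squared variables is needed; what the paper's route buys is brevity, since it reuses the same off-the-shelf inequality twice. Both arguments yield the stated constants.
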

        \begin{proof}
     This is a direct consequence of  Bernstein's inequality. For \eqref{eq_bernstein_bernoulli_1} we note that \begin{align*}
          \E \sum_{ij}(E_{ij}-p)^{2}a_{ij}^{2}\leq p(1-p)\|a\|_2^2.
          \end{align*}
 On the other hand, for any $(ij)\in[n]\times[m]$ we have $\left \vert (E_{ij}-p)a_{ij}\right \vert\leq \|a\|_\infty$. Then, Bernstein's inequality implies \eqref{eq_bernstein_bernoulli_1}.

     For \eqref{eq_bernstein_bernoulli_2} we have 
     \begin{align*}
     \E \sum_{ij}(E_{ij}-p)^{4}\leq \E \sum_{ij}(E_{ij}-p)^{2}= p(1-p)nm
     \end{align*}
     and 
     \begin{align*}
          \left \vert E_{ij}-p\right \vert^{2} \leq p^{2}\vee (1-p)^{2}\leq 1.
          \end{align*}
          Then, using Bernstein's inequality we get 
          \begin{align*}
          \mathbb{P}\left \{\sum_{ij}(E_{ij}-p)^{2}\geq t+pnm(1-p)\right \}\leq \exp\left \{-\frac{t^{2}}{2pnm(1-p)+2t/3}\right \};
          \end{align*}
     Choosing $t=2pnm$ we get the result of Lemma     \ref{lemma_bernstein_bernoulli}. 
        \end{proof}
           
           \begin{lemma}\label{stoch}
           Assume that  $n\,m\log\left (3\sqrt{nm}\right )\geq 6\log \left (k_n\,k_m\right )$. Then, with probability greater then $1-2\exp(-d/10)-\exp\left (-pnm\right )$,
           \begin{align*}
           \underset{\theta\in\mathcal{X}_{\nu}}{\sup}\dfrac{\langle \xi_{\Omega},\theta-\theta^{*}\rangle-\frac{p}{8} \|\theta-\theta^{*} \|_2^2}{\left (\sigma\vee\Q\right )^{2}R( \theta)}\leq 2.
           \end{align*}
           where $R(\theta)$ is defined in \eqref{def_R}, and $\mathcal{X}_{\nu}$ is defined in the proof of Theorem \ref{thm_mc_bic}.
            \end{lemma}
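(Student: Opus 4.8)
The plan is to run the same peeling‑plus‑covering argument that was used for Lemma~\ref{rip_sampling}, with the Bernstein bound for the Bernoulli weights replaced by an exponential bound for the weighted sub‑Gaussian sums $\langle\xi_\Omega,\cdot\rangle$ (the one underlying Lemma~\ref{lemma_bernstein_1}). Write $\langle\xi_\Omega,\theta-\theta^*\rangle=\sum_{i,j}E_{ij}\xi_{ij}(\theta-\theta^*)_{ij}$ and split $\mathcal{X}_\nu=\bigcup_{s_n=1}^{k_n}\bigcup_{s_m=1}^{k_m}\bigcup_{l\ge 7}S_l(s_n,s_m)$, where $S_l(s_n,s_m)=\{\theta\in\widetilde\Theta(s_n,s_m):\ 2^l\nu\le\|\theta-\theta^*\|_2\le 2^{l+1}\nu\}$ and the range $l\ge 7$ is forced by $\|\theta-\theta^*\|_2>2^7\nu$ on $\mathcal{X}_\nu$. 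By a union bound it suffices, for each triple $(s_n,s_m,l)$, to control the probability that $\sup_{S_l(s_n,s_m)}\bigl(\langle\xi_\Omega,\theta-\theta^*\rangle-\tfrac p8\|\theta-\theta^*\|_2^2\bigr)>2(\sigma\vee\Q)^2R(s_n,s_m)$; writing $r=2^{l+1}\nu$ and using $\|\theta-\theta^*\|_2\ge r/2$ on $S_l$, one converts part of $\tfrac p8\|\theta-\theta^*\|_2^2$ into an additive term $\asymp pr^2$, reducing matters to bounding $\mathbb P\{\sup_{\theta\in\widetilde\Theta_r(s_n,s_m)}\langle\xi_\Omega,\theta-\theta^*\rangle\ge 2(\sigma\vee\Q)^2R(s_n,s_m)+\text{(a multiple of $pr^2$)}\}$ on a suitable high‑probability event.

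As in Lemma~\ref{rip_sampling}, I would introduce $S=\min\{s\ge 2:\ 2^{-s}\le\sqrt{\mathcal I/(nm)}\}$ and a $2^{-S}r$‑net $\{G_k\}$ of $\widetilde\Theta_r(s_n,s_m)$ in the Frobenius norm, projected onto $\{\|\cdot\|_\infty\le\Q\}$ so that $\|G_k-\theta^*\|_\infty\le 2\Q$; by Proposition~\ref{prop:coveringbound} and Lemma~\ref{lemma_control_covering}, $\log(\#\{G_k\})\le R(s_n,s_m)$. For the discretization term, Cauchy--Schwarz gives $|\langle\xi_\Omega,\theta-G_\theta\rangle|\le\|\xi_\Omega\|_2\,2^{-S}r$ where $\|\xi_\Omega\|_2^2=\sum_{i,j}E_{ij}\xi_{ij}^2$; a Chernoff bound combining the sub‑exponential tails of the $\xi_{ij}^2$ with $\mathbb E\sum_{i,j}E_{ij}\xi_{ij}^2\le\sigma^2 pnm$ shows $\|\xi_\Omega\|_2^2\le C_0\sigma^2 pnm$ with probability at least $1-\exp(-pnm)$, and then, because $2^{-S}\le\sqrt{\mathcal I/(nm)}$ and $r\ge 2^7\nu=2^7(\sigma\vee\Q)\sqrt{\mathcal I/p}$, this term is at most a small fraction of $pr^2$. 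It remains to bound, via a union bound over the net, $\mathbb P\{\langle\xi_\Omega,a\rangle>t\}$ for fixed $a=G_k-\theta^*$ with $\|a\|_\infty\le 2\Q$ and $\|a\|_2\le 2r$. Conditioning on $(E_{ij})$ and using $\mathbb E\,e^{\lambda E_{ij}\xi_{ij}a_{ij}}\le\exp\!\bigl(p(e^{\lambda^2\sigma^2a_{ij}^2/2}-1)\bigr)$ for all $\lambda$, which for $|\lambda|\le\sqrt 2/(\sigma\|a\|_\infty)$ is $\le\exp\bigl(p\lambda^2(\sigma\vee\Q)^2a_{ij}^2\bigr)$, the Chernoff argument yields $\mathbb P\{\langle\xi_\Omega,a\rangle>t\}\le\exp\!\bigl(-c\min\{t^2/(p(\sigma\vee\Q)^2r^2),\ t/(\sigma\vee\Q)^2\}\bigr)$. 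Choosing $t$ so that its $R(s_n,s_m)$‑part, after the rescaling $\lambda\asymp 1/(\sigma\vee\Q)^2$, exactly absorbs $\log(\#\{G_k\})\le R(s_n,s_m)$ (exactly as in Lemma~\ref{rip_sampling}), the Chernoff exponent is left with a term of order $-pr^2/(\sigma\vee\Q)^2\asymp-2^{2l}\mathcal I$; summing the resulting geometric‑type series over $l\ge 7$ and over the at most $k_nk_m$ pairs $(s_n,s_m)$ — using $e^x\ge x$, $\mathcal I\ge d\ge 10$, and $nm\log(3\sqrt{n\wedge m})\ge 6\log(k_nk_m)$, which as at the end of the proof of Lemma~\ref{rip_sampling} gives $\mathcal I\ge 6\log(k_nk_m)$ — bounds the total failure probability by $2\exp(-d/10)+\exp(-pnm)$, completing the proof.

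I expect the main obstacle to be the constant bookkeeping in the last step: one must simultaneously keep the $R(\theta)$‑coefficient in the final inequality equal to $2$, make the Chernoff exponent dominate the covering number $\log(\#\{G_k\})$, retain enough decay in $2^{2l}$ to sum over the dyadic shells, and leave enough slack to absorb $\log(k_nk_m)$ when summing over sparsity levels. This pins down the choices of the net scale $2^{-S}$, of the dyadic ratio, and of the split of $t$ into an $R(s_n,s_m)$‑part and a $pr^2$‑part, and it is precisely where the hypotheses $d\ge 10$, $nm\log(3\sqrt{n\wedge m})\ge 6\log(k_nk_m)$, and $\mathcal I\ge d$ are used; everything else is a routine adaptation of the proof of Lemma~\ref{rip_sampling} with the Bernoulli moment generating function replaced by the sub‑Gaussian one.
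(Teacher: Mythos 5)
Your proposal follows essentially the same route as the paper's own proof: a union bound over sparsity pairs $(s_n,s_m)$, dyadic peeling over shells $\{2^l\nu\le\|\theta-\theta^*\|_2\le 2^{l+1}\nu\}$ with $l\ge 7$, a $2^{-S}r$-net of $\widetilde\Theta_r(s_n,s_m)$ projected onto $\{\|\cdot\|_\infty\le\Q\}$ whose log-cardinality is controlled by Lemma~\ref{lemma_control_covering}, discretization handled by Cauchy--Schwarz on the event $\{\sum_{ij}E_{ij}\xi_{ij}^2\lesssim\sigma^2 pnm\}$, a Chernoff/MGF bound at scale $\lambda\asymp(\sigma\vee\Q)^{-2}$ on the net points, and summation of the resulting geometric series using $\mathcal{I}\ge d$ and $nm\log(3\sqrt{n\wedge m})\ge 6\log(k_nk_m)$. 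The minor differences (conditioning on $E$ for the MGF versus the paper's direct use of Lemma~\ref{lm:exp_bernoulli} with Assumption~\ref{assumption_noise}, and stating a generic Bernstein-type tail before specializing $t$) do not change the argument.
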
         
           \begin{proof} 
          We proceed as in the proof of Lemma \ref{rip_sampling}.  Let $\mathcal{E}_{1}=\left \{ \sqrt{\sum_{ij} E_{ij} \xi^{2}_{ij}} \le \sigma\sqrt{6pnm}\right \}$. Lemma \ref{lemma_bernstein_1} implies that $\mathbb{P}(\mathcal{E}_{1})\geq 1-\exp\left (-pnm\right )$. Using the definition of $\mathcal{X}_{\nu}$ we get
           \begin{align}\label{stoch_1}
          & \mathbb{P}\left \{\underset{\theta\in\mathcal{X}_{\nu}}{\sup}\dfrac{\langle \xi_{\Omega},\theta-\theta^{*}\rangle-\frac{p}{8} \|\theta-\theta^{*} \|_2^2}{\left (\sigma\vee\Q\right )^{2}R( \theta)}\geq 2, \mathcal{E}_{1}\right \}\leq  \sum_{s_n=1}^{k_n}\sum_{s_m=1}^{k_m}{\mathrm I}_{s_n,s_m}'
  \end{align}
  where         
          $$
          {\mathrm I}_{s_n,s_m}' =
 {\mathbb{P}\left \{\underset{\theta\in \widetilde\Theta^{\nu}(s_n,s_m)}{\sup}\langle \xi_{\Omega},\theta-\theta^{*}\rangle-\frac{p}{8} \|\theta-\theta^{*} \|_2^2\geq 2\left (\sigma\vee\Q\right )^{2}R(s_n,s_m), \mathcal{E}_{1}\right \}}.
           $$
           Using a standard peeling argument  we find
            \begin{align}\label{stoch_2}
             \mathrm I_{s_n,s_m}' 
             &\leq  \sum_{l=7}^{\infty}\mathbb{P}\left \{\underset{\theta\in S_{l}}{\sup}\langle \xi_{\Omega},\theta-\theta^{*}\rangle-\frac{p}{8} \|\theta-\theta^{*} \|_2^2\geq 2\left (\sigma\vee\Q\right )^{2}R(s_n,s_m), \mathcal{E}_{1}\right \}\nonumber\\
             &\leq  \sum_{l=7}^{\infty}\mathbb{P}\left \{\underset{\theta\in \widetilde\Theta_{r}(s_n,s_m)}{\sup}\langle \xi_{\Omega},\theta-\theta^{*}\rangle\geq 2\,\left (\sigma\vee\Q\right )^{2}R(s_n,s_m)+\frac{pr^{2}}{8\mu^{2}}, \mathcal{E}_{1}\right \}
               \end{align}
             where $r= \mu^{l+1}\nu$. 
               Let $\{\widetilde G^{S}_{j}\}^{N_s}_{j=1}$ be a minimal $2^{-S}r$-covering set of $\Theta_{r}(s_n,s_m)$ in the Frobenius norm
              and $$S=\min\left \{s\geq 1\;:\;2^{-s}\leq \sqrt{\frac{\mathcal{I}}{nm}}\right \}.$$ As in the proof of Lemma \ref{rip_sampling} we define $G^{S}_{j}=\Pi(\widetilde G^{S}_{j})$. Then, there exists $G_{\theta}^{S}\in \{G^{S}_{j}\}^{N_s}_{j=1}$ such that on the event $\mathcal{E}_{1}$
             \begin{align*}
              \left \vert\sum_{(ij)}E_{ij}\xi_{ij}\left [\left (\theta-\theta^{*}\right )_{ij}-\left (G_{\theta}^{S}-\theta^{*}\right )_{ij}\right ]\right \vert\leq \frac{pr^{2}}{16\mu^{2}}
             \end{align*}
            where we use $\sqrt{\frac{\mathcal{I}}{nm}}\leq \frac{pr}{16\mu^{2}\sigma\sqrt{6pnm}}$. So, it suffices to prove the exponential inequality for
             \begin{align*}
             \mathbb{P}\left \{\underset{k=1,\dots,N_{S}}{\max}\sum_{(ij)}E_{ij}\xi_{ij}(G^{S}_{k}-\theta^{*})_{ij}\geq 2\, \left (\sigma\vee\Q\right )^{2}R(s_n,s_m)+\frac{pr^{2}}{16\mu^{2}}\right \}.
             \end{align*}
             We apply Markov's inequality. Set $t= R(s_n,s_m)+\frac{pr^{2}}{32\left (\sigma\vee\Q\right )^{2}\mu^{2}}$, then
             \begin{align}\label{stoch_3}
               &\mathbb{P}\left \{\underset{k=1,\dots,N_{S}}{\max}\dfrac{1}{2\left (\sigma\vee\Q\right )^{2}}\sum_{(ij)}E_{ij}\xi_{ij}(G^{S}_{k}-\theta^{*})_{ij}\geq t\right \}\nonumber\\&\hskip 0.5 cm\leq e^{-t}\E \exp\left (\underset{k=1,\dots,N_{S}}{\max}\dfrac{1}{2\left (\sigma\vee\Q\right )^{2}}\sum_{(ij)}E_{ij}\xi_{ij}(G^{S}_{k}-\theta^{*})_{ij}\right )\nonumber\\&\hskip 1 cm \leq e^{-t}\sum_{k=1}^{N_{S}}\E \exp\left (\dfrac{1}{2\left (\sigma\vee\Q\right )^{2}}\sum_{(ij)}E_{ij}\xi_{ij}(G^{S}_{k}-\theta^{*})_{ij}\right )
              \nonumber \\&\hskip 1.5 cm \leq e^{-t}\sum_{k=1}^{N_{S}}\prod_{(ij)}\E \exp\left (E_{ij}\xi_{ij}\dfrac{(G^{S}_{k}-\theta^{*})_{ij}}{2\left (\sigma\vee\Q\right )^{2}}\right ).
               \end{align}
               Lemma \ref{lm:exp_bernoulli} implies that $\mathbb{E}\left (e^{\lambda E_{ij}}\right )\leq e^{2\lambda p}$ for any $0\leq \lambda\leq 1$.
                Then, using
                 Assumption \ref{assumption_noise}  and \eqref{stoch_3} we get \begin{align*}
                   &\mathbb{P}\left \{\underset{k=1,\dots,N_{S}}{\max}\dfrac{1}{2\left (\sigma\vee\Q\right )^{2}}\sum_{(ij)}E_{ij}\xi_{ij}(G^{S}_{k}-\theta^{*})_{ij}\geq t\right \}
                   \\&\hskip 2 cm \leq e^{-t}\sum_{k=1}^{N_{S}}\prod_{(ij)} \exp\left (\dfrac{p(G^{S}_{k}-\theta^{*})^{2}_{ij}}{4\left (\sigma\vee\Q\right )^{2}}\right )\\&\hskip 2 cm\leq   \exp\left (\dfrac{pr^{2}}{2^{8}\left (\sigma\vee\Q\right )^{2}}+\log(N_{S})-t\right )\leq \exp\left (-\dfrac{3pr^{2}}{128\mu^{2}\left (\sigma\vee\Q\right )^{2}}\right ).
                   \end{align*}
               The last inequality follows from  Lemma \ref{lemma_control_covering} which implies $\log(N_{S})\leq R(s_n,s_m)$. Combining the last display with \eqref{stoch_1} and \eqref{stoch_2} 
               we find
                              \begin{align*}
                &\mathbb{P}\left \{\underset{\theta\in\mathcal{X}_{\nu}}{\sup}\dfrac{\langle \xi_{\Omega},\theta-\theta^{*}\rangle-\frac{p}{8} \|\theta-\theta^{*} \|_2^2}{\left (\sigma\vee\Q\right )^{2}R( \theta)}\geq 2, \mathcal{E}_{1}\right \}\\&\hskip 0.5 cm\leq \sum_{s_n=1}^{k_n}\sum_{s_m=1}^{k_m}\sum_{l=7}^{\infty} \exp\left (-\dfrac{3l\log(2)\mathcal{I}}{64}\right )\\&\hskip 1 cm\leq 2\sum_{s_n=1}^{k_n}\sum_{s_m=1}^{k_m}\exp\left (-\dfrac{\mathcal{I}}{5}\right )\leq 2\exp\left (-\dfrac{n+m}{10}\right )
               \end{align*}
               where we have used that $e^{x}\geq x$, $\nu^{2}=\frac{\left (\sigma\vee\Q\right )^{2}}{p}\mathcal{I}$, $\mathcal{I}=\underset{(s_n,s_m)}{\min} R(s_n,s_m)\geq (n+m)$, and $n\,m\log\left (3\sqrt{nm}\right )\geq 6\log \left (k_n\,k_m\right )$.
           \end{proof}   
           \begin{lemma}\label{lemma_bernstein_1}
               Let $a=(a_{ij})\in\mathbb{R}^{n\times m}$. Then, for any $t>0$ we have
               \begin{align}\label{eq_bernstein_1}
               \mathbb{P} \left\{ \sum_{ij} a_{ij}E_{ij} \xi_{ij} > t \right\} \le \exp \left( -  \min\left\{\frac{t^2}{4\sigma^{2}p\|a\|_2^2}, \frac{t}{\sqrt{2}\sigma\|a\|_\infty} \right\} \right),
               \end{align}
               and 
               \begin{equation}\label{eq_bernstein_2}
               \mathbb{P} \left \{ \sqrt{\sum_{ij} E_{ij} \xi^{2}_{ij}} \ge \sigma\sqrt{6pnm} \right\} \le \exp ( -pnm).
               \end{equation}
               \end{lemma}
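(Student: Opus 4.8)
The plan is to prove both displays by the standard Chernoff argument, using the Bernoulli--subgaussian product structure and conditioning on the indicators $E_{ij}$.

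\textbf{Bound \eqref{eq_bernstein_1}.} Fix $\lambda>0$. By independence of the pairs $(E_{ij},\xi_{ij})$ it suffices to control each moment generating factor. Conditioning first on $E_{ij}$ and then invoking Assumption~\ref{assumption_noise} gives
\[
\mathbb{E}\exp\!\left(\lambda a_{ij}E_{ij}\xi_{ij}\right)
=(1-p)+p\,\mathbb{E}\exp\!\left(\lambda a_{ij}\xi_{ij}\right)
\le 1+p\left(e^{\lambda^{2}a_{ij}^{2}\sigma^{2}/2}-1\right).
\]
I would then restrict to $\lambda\le \sqrt{2}/(\sigma\|a\|_\infty)$, so that $\lambda^{2}a_{ij}^{2}\sigma^{2}/2\le 1$; since $e^{x}-1\le 2x$ on $[0,1]$, the right-hand side is at most $1+p\lambda^{2}a_{ij}^{2}\sigma^{2}\le\exp(p\lambda^{2}a_{ij}^{2}\sigma^{2})$. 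Taking the product over $(i,j)$ yields
\[
\mathbb{E}\exp\!\Big(\lambda\sum_{ij}a_{ij}E_{ij}\xi_{ij}\Big)\le\exp\!\big(p\lambda^{2}\sigma^{2}\|a\|_{2}^{2}\big),\qquad 0<\lambda\le\tfrac{\sqrt{2}}{\sigma\|a\|_\infty},
\]
and Markov's inequality gives $\mathbb{P}(\sum_{ij}a_{ij}E_{ij}\xi_{ij}>t)\le\exp(-\lambda t+p\lambda^{2}\sigma^{2}\|a\|_{2}^{2})$. The last step is to optimize $\lambda$ under this constraint: if the unconstrained minimizer $\lambda^\star=t/(2p\sigma^{2}\|a\|_{2}^{2})$ is admissible, one obtains the bound $\exp(-t^{2}/(4p\sigma^{2}\|a\|_{2}^{2}))$; otherwise $t>2\sqrt{2}\,p\sigma\|a\|_{2}^{2}/\|a\|_\infty$, and taking $\lambda=\sqrt{2}/(\sigma\|a\|_\infty)$ a one-line computation (bounding the variance term $2p\|a\|_2^2/\|a\|_\infty^2$ by half of $\sqrt{2}t/(\sigma\|a\|_\infty)$) shows $-\lambda t+p\lambda^{2}\sigma^{2}\|a\|_{2}^{2}\le -t/(\sqrt{2}\,\sigma\|a\|_\infty)$. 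Combining the two cases gives \eqref{eq_bernstein_1}.

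\textbf{Bound \eqref{eq_bernstein_2}.} Here I would first record the squared--subgaussian estimate
\[
\mathbb{E}\exp(\lambda\xi_{ij}^{2})\le(1-2\lambda\sigma^{2})^{-1/2},\qquad 0\le\lambda<\tfrac{1}{2\sigma^{2}},
\]
which follows from the Gaussian integration identity $e^{\lambda\xi^{2}}=\mathbb{E}_{Z}\exp(\sqrt{2\lambda}\,\xi Z)$ with $Z\sim\mathcal N(0,1)$ independent of $\xi$: applying Fubini (all terms are nonnegative) and Assumption~\ref{assumption_noise} conditionally on $Z$ gives $\mathbb{E}e^{\lambda\xi^{2}}\le\mathbb{E}_{Z}e^{\lambda\sigma^{2}Z^{2}}=(1-2\lambda\sigma^{2})^{-1/2}$. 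Conditioning on $E_{ij}$, $\mathbb{E}\exp(\lambda E_{ij}\xi_{ij}^{2})\le 1+p\big((1-2\lambda\sigma^{2})^{-1/2}-1\big)$, and with $\lambda=1/(4\sigma^{2})$ this is at most $1+(\sqrt{2}-1)p\le\exp((\sqrt{2}-1)p)$. Multiplying over the $nm$ indices and applying Markov's inequality with threshold $t=6\sigma^{2}pnm$ gives
\[
\mathbb{P}\Big(\sum_{ij}E_{ij}\xi_{ij}^{2}\ge 6\sigma^{2}pnm\Big)\le\exp\!\Big(-\tfrac{3}{2}pnm+(\sqrt{2}-1)pnm\Big)\le\exp(-pnm),
\]
which is \eqref{eq_bernstein_2} after taking square roots on the relevant event.

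The routine parts are the two Chernoff optimizations; the only points requiring a little care are the case split in \eqref{eq_bernstein_1} (the sub-exponential regime, where the variance term does not dominate and one must use the constrained value of $\lambda$) and the justification of the squared-subgaussian moment generating bound for general $\sigma$-subgaussian variables used in \eqref{eq_bernstein_2}. I expect the case-split bookkeeping in \eqref{eq_bernstein_1} to be the main, though minor, obstacle.
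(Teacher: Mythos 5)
Your proof is correct. For \eqref{eq_bernstein_1} it is essentially the paper's argument: the same per-entry bound $\E\exp(\lambda a_{ij}E_{ij}\xi_{ij})\le\exp(p\lambda^{2}a_{ij}^{2}\sigma^{2})$ on the range $\lambda\le\sqrt{2}/(\sigma\|a\|_\infty)$ (the paper gets it from the sub-Gaussian bound conditionally on $E_{ij}$ plus the estimate $\E e^{\mu E_{ij}}\le e^{2\mu p}$ for $0\le\mu\le1$, you get it from $(1-p)+p\,e^{x}\le 1+2px$ for $x\in[0,1]$ — same constant), followed by the identical Chernoff optimization with the same case split at $t=2\sqrt{2}\,p\sigma\|a\|_2^2/\|a\|_\infty$ and the same two choices of $\lambda$. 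For \eqref{eq_bernstein_2} you take a genuinely different, and more self-contained, route: the paper centers, writing $\lambda E_{ij}(\xi_{ij}^{2}-\sigma^{2})+\lambda E_{ij}\sigma^{2}$ with $\lambda=(2\sigma^{2})^{-1}$, and invokes a sub-exponential moment bound $\E\exp(\mu(\xi^{2}-\sigma^{2}))\le\exp(2\mu^{2}\sigma^{4})$ for the squared sub-Gaussian (stated without proof) together with $\E e^{\mu E_{ij}}\le e^{2\mu p}$, ending with $\exp(-3pnm+2pnm)$; you instead derive $\E e^{\lambda\xi^{2}}\le(1-2\lambda\sigma^{2})^{-1/2}$ directly from Assumption 1 by Gaussian randomization, condition on $E_{ij}$, and work at $\lambda=1/(4\sigma^{2})$, ending with $\exp\left(-\tfrac32 pnm+(\sqrt{2}-1)pnm\right)\le\exp(-pnm)$. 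Your version buys a fully justified squared-sub-Gaussian MGF estimate (the paper's centered bound is asserted with specific constants and implicitly uses $\E\xi^{2}\le\sigma^{2}$), at the cost of a slightly smaller admissible $\lambda$; both yield the threshold $6\sigma^{2}pnm$ and the probability $e^{-pnm}$, so the difference is purely in the technical device, not in the conclusion.
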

              \begin{proof}
Lemma \ref{lm:exp_bernoulli} implies that $\mathbb{E}\left (e^{\lambda E_{ij}}\right )\leq e^{2\lambda p}$ for any $0\leq \lambda\leq 1$. Then, using Assumption \ref{assumption_noise} for $0\leq\lambda\leq \sqrt{2}/(\sigma \Vert a\Vert_{\infty}) $ we obtain
  \begin{align*}
                \mathbb{E}\left [\exp(\lambda E_{ij}a_{ij}\xi_{ij})\right ]\leq \mathbb{E}\left [\exp(\lambda^{2} E_{ij}a^{2}_{ij}\sigma^{2}/2)\right ]\leq \exp(\lambda^{2} a^{2}_{ij}\sigma^{2}p).
                \end{align*}
            The Chernoff argument yields 
            \begin{align*}
                           \mathbb{P} \left\{ \sum_{ij} a_{ij}E_{ij} \xi_{ij} > t \right\}&=\mathbb{P} \left\{ e^{\sum_{ij} (\lambda a_{ij}E_{ij} \xi_{ij})} > e^{\lambda t}\right\}\\&\leq e^{-\lambda t} \exp(\lambda^{2} \sigma^{2}p\Vert a\Vert_{2}^{2}). 
                           \end{align*}
                           Now, choosing $\lambda=\frac{t}{2\sigma^{2}p\Vert a\Vert_{2}^{2}}$ if $t\leq \frac{2\sqrt{2}\sigma p \Vert a\Vert_{2}^{2}}{\Vert a\Vert_{\infty}}$ and $\lambda=\frac{\sqrt{2}}{\sigma\Vert a\Vert_{\infty}}$ if $t> \frac{2\sqrt{2}\sigma p \Vert a\Vert_{2}^{2}}{\Vert a\Vert_{\infty}}$ we get \eqref{eq_bernstein_1}.

                           We prove \eqref{eq_bernstein_2} in a similar way. Using Markov's inequality for $\lambda=\left (2\sigma^{2}\right )^{-1}$ we find
                           \begin{align*}
                                          \mathbb{P} \left \{ \sum_{ij} E_{ij} \xi^{2}_{ij} \ge t\right\}&= \mathbb{P} \left\{ e^{\lambda \sum_{ij} E_{ij} \xi^{2}_{ij}} > e^{\lambda t}\right\} \\&\leq e^{-\lambda t} \Pi_{ij}\mathbb{E}\exp(\lambda E_{ij} \left (\xi^{2}_{ij}- \sigma^{2}\right )+\lambda E_{ij}\sigma^{2})
                                          \\&\leq e^{-\lambda t} \Pi_{ij}\mathbb{E}\exp(2\lambda^{2} E_{ij}  \sigma^{4}+\lambda E_{ij}\sigma^{2})
                                          \\&\leq e^{- t/(2\sigma^{2})+2pnm}
                                          \end{align*}
               and we take $t=6\sigma^{2}pnm$.            
              \end{proof} 
              \begin{lemma}\label{lemma_control_covering}
              Let 
              $N_{S}=\mathcal{N}_{r2^{-S}}\left (\Theta_{r}(s_n,s_m)\right )$ where $$S=\min\left \{s\geq 3\;:\;2^{-s}\leq \sqrt{\frac{\mathcal{I}}{nm}}\right \}\quad \text{and }\quad r=2^{l+1}(\sigma\vee\Q) \sqrt{\frac{\mathcal{I}}{p}}$$ for $l\geq 6$. We have that $$R(s_n,s_m)\geq \log N_{S}.$$ 
              \end{lemma}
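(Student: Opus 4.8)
The plan is to view $N_{S}$ as the $\epsilon$-covering number, with $\epsilon:=r2^{-S}$, of the Frobenius ball of radius $r$ around $\theta^{*}$ inside $\widetilde\Theta(s_{n},s_{m})$, i.e.\ of $\widetilde\Theta_{r}(s_{n},s_{m})$ in the notation of Proposition~\ref{prop:coveringbound}, and to feed that proposition with $u=r$ and this value of $\epsilon$. The proposition is stated for $u\le1$ and $\epsilon<1$, but these normalisations are cosmetic and are not used in its proof; the substantive ingredients, Lemmas~\ref{lm:coveringB}, \ref{lm:coveringZ} and \ref{lm:matrixbound}, apply verbatim (a net of $\widetilde{\mathcal{A}}_{n}$ that the bound of Lemma~\ref{lm:coveringZ} would make larger than $\binom{k_{n}}{s_{n}}^{n}$ is simply replaced by the support-enumeration net, which only improves the estimate). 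So the real task is to control the two ratios $u/\epsilon=2^{S}$ and $\sqrt{nm}/\epsilon$ entering the right-hand side of Proposition~\ref{prop:coveringbound}.

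First I would bound $2^{S}$. Minimality of $S$ gives $2^{S}\le 2\max\{8,\sqrt{nm/\mathcal{I}}\}$, and combining $\mathcal{I}\ge n+m$ with $nm/(n+m)\le n\wedge m$ yields $2^{S}\le c_{1}\sqrt{n\wedge m}$ for an absolute constant $c_{1}$. Next I would bound $\epsilon$ from below. Writing $\epsilon=2^{\,l+1-S}(\sigma\vee\Q)\sqrt{\mathcal{I}/p}$ and using $2^{-S}\ge\tfrac12\sqrt{\mathcal{I}/(nm)}$ together with $l\ge6$,
$$
\epsilon\;\ge\; c_{2}\,(\sigma\vee\Q)\,\frac{\mathcal{I}}{\sqrt{pnm}},
\qquad\text{hence}\qquad
\frac{\sqrt{nm}}{\epsilon}\;\le\;\frac{1}{c_{2}(\sigma\vee\Q)}\cdot\frac{\sqrt{p}\,nm}{\mathcal{I}}\;\le\;\frac{c_{3}}{\sigma\vee\Q}\,(n\wedge m),
$$
where I used $p\le1$, $\mathcal{I}\ge n+m$ and $nm/(n+m)\le n\wedge m$ once more. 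Since $\sigma\vee\Q\ge\Q$, every factor of the form $B_{max}\sqrt{nm}/\epsilon$ occurring in $R_{1}(\epsilon),\dots,R_{4}(\epsilon)$ is therefore bounded by an absolute constant (depending on $B_{max},\sigma,\Q$) times $n\wedge m$.

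Substituting these two bounds into $R_{1}(\epsilon),R_{2}(\epsilon),R_{3}(\epsilon)$, each factor $\log(9u/\epsilon)$ or $\log(6u/\epsilon)$ becomes $\log(c\sqrt{n\wedge m})$ and each of the remaining logarithms becomes $\log\!\bigl(c\,k_{n}s_{m}(n\wedge m)\bigr)$ (absorbing $e$, $B_{max}$, $\sigma$, $\Q$, the factors $s_{n}s_{m}$, and the $k_{n}\leftrightarrow k_{m}$, $s_{n}\leftrightarrow s_{m}$ labellings into the constant), so that each $R_{i}(\epsilon)$ collapses, up to these constants, to one of the sums obtained by picking one term from each of the first two square brackets of \eqref{def_R_s} and keeping the last square bracket. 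To reach the remaining \emph{branch} $nr_{m}\log(\cdot)+mr_{n}\log(\cdot)+r_{n}r_{m}\log(\cdot)$ I would add the elementary bound for the covering number of the set of $n\times m$ matrices of rank at most $r_{n}\wedge r_{m}$ inside the Frobenius ball of radius $r$, whose logarithm is at most $C(r_{n}\wedge r_{m})(n+m)\log(Cr/\epsilon)\le C'(nr_{m}+mr_{n}+r_{n}r_{m})\log(c\sqrt{n\wedge m})$. Taking the minimum of the four resulting covering estimates then gives $\log N_{S}\le R(s_{n},s_{m})$.

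The conceptual content is slight; I expect the only real work to be the constant- and argument-bookkeeping in the last paragraph --- namely checking that after the substitutions the logarithmic factors really are dominated by the specific arguments $6\sqrt{n\wedge m}$, $9\sqrt{n\wedge m}$ and $k_{n}s_{m}(n\wedge m)$ of \eqref{def_R_s} (this is where the hypotheses $\mathcal{I}\ge n+m$, $nm/(n+m)\le n\wedge m$ and $l\ge6$ get used), and noticing that the branch $nr_{m}\log+mr_{n}\log+r_{n}r_{m}\log$ cannot be obtained from $R_{1}(\epsilon),\dots,R_{4}(\epsilon)$ alone, which is the reason the plain rank-$(r_{n}\wedge r_{m})$ covering estimate has to be brought in.
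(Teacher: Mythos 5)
Your proposal follows the same route as the paper for the bulk of the argument: the paper also applies Proposition \ref{prop:coveringbound} with $u=r$ and $\epsilon=r2^{-S}$ (disregarding the nominal restrictions $u\le 1$, $\epsilon<1$), uses the minimality of $S$ to get $\sqrt{\mathcal I/(nm)}\ge 2^{-S}\ge \tfrac12\sqrt{\mathcal I/(nm)}$, hence $u/\epsilon=2^{S}\lesssim\sqrt{nm/\mathcal I}$ and $\sqrt{nm}/\epsilon\lesssim \frac{nm\sqrt p}{(\sigma\vee\Q)\,\mathcal I}$ (with $l\ge 6$ absorbing the $B_{max}$-type constants), and then invokes $\mathcal I\ge n+m$, i.e.\ $nm/\mathcal I\le n\wedge m$, to compare the resulting bounds on $R_1,\dots,R_4$ with \eqref{def_R_s}. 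Where you genuinely differ is the final step. The paper simply asserts $R_1(r2^{-S})\wedge\cdots\wedge R_4(r2^{-S})\le R(s_n,s_m)$; this matches the branches of \eqref{def_R_s} in which at least one of the two minima selects an $ns_n$- or $ms_m$-term (or in which $r_n=n$ or $r_m=m$, when $R_4$ suffices), but it does not address the branch $nr_m\log(6\sqrt{n\wedge m})+mr_n\log(6\sqrt{n\wedge m})+r_nr_m\log(9\sqrt{n\wedge m})$, and in the regime $k_n<n$, $k_m<m$, $s_n=k_n$, $s_m=k_m$ that branch can indeed be smaller than each of the four bounds supplied by Proposition \ref{prop:coveringbound}. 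Your observation that this branch cannot be recovered from $R_1,\dots,R_4$ alone, and your remedy via a covering bound for matrices of rank at most $r_n\wedge r_m$ in a Frobenius ball of radius $r$, supplies exactly what the paper's one-line conclusion glosses over, so in this respect your argument is more complete than the paper's.

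One caveat on your patch: the standard factor-decomposition bound covers $\{\mathrm{rank}\le k,\ \|\theta\|_2\le R\}$, whereas here the ball is centered at an arbitrary $\theta^*$; you must re-center at some point $\theta_1$ of the set and cover $\{\mathrm{rank}\le 2(r_n\wedge r_m),\ \|\cdot\|_2\le 2r\}$, which costs roughly a factor $2$ in the dimension count and a constant inside the logarithm. Consequently your route (like the paper's own display, which is already loose by constants inside the logarithms and implicitly needs $B_{max}\lesssim \sigma\vee\Q$) delivers the inequality $\log N_S\le R(s_n,s_m)$ only after inflating the numerical constants in \eqref{def_R_s} by an absolute factor; this is harmless for the way the lemma feeds into Lemmas \ref{rip_sampling} and \ref{stoch}, but it should be acknowledged rather than deferred entirely to bookkeeping.
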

                
    \begin{proof}
    We use Proposition \ref{prop:coveringbound} and $\sqrt{\dfrac{\mathcal{I}}{nm}}\geq 2^{-S}\geq \dfrac{1}{2}\sqrt{\dfrac{\mathcal{I}}{nm}}$ to get
    \begin{eqnarray*}
    R_1(r2^{-S}) &\leq& ns_n \log \frac{ek_n}{s_n}+ m s_m \log \frac{ek_m}{s_m} + (ns_n+ms_m) \log\left ( \frac{3B_{max} s_m s_n}{2^{6}(\sigma\vee \Q)}\dfrac{nm\sqrt{p}}{\mathcal{I}}\right )\\ &+ & r_n r_m \log\left ( 9\sqrt{\dfrac{nm}{\mathcal{I}}}\right )\\
    R_2(r2^{-S}) &\leq& nr_m \log\left ( 6\sqrt{\dfrac{nm}{\mathcal{I}}}\right ) + ms_m \log \frac{ek_m}{s_m} + ms_m \log\left ( \frac{B_{max} s_m s_n}{2^{6}(\sigma\vee \Q)}\dfrac{nm\sqrt{p}}{\mathcal{I}}\right ),\\
    R_3(r2^{-S}) &\leq& mr_n \log\left ( 6\sqrt{\dfrac{nm}{\mathcal{I}}}\right )+ ns_n \log \frac{ek_n}{s_n} + ns_n  \log\left ( \frac{B_{max} s_m s_n}{2^{6}(\sigma\vee \Q)}\dfrac{nm\sqrt{p}}{\mathcal{I}}\right ),\\
    R_4(r2^{-S}) &\leq& m n \log\left ( 3\sqrt{\dfrac{nm}{\mathcal{I}}}\right )
    \end{eqnarray*}
    and we have that 
    \begin{equation*}
    R_1(r2^{-S})\wedge R_2(r2^{-S}) \wedge R_3(r2^{-S}) \wedge R_4(r2^{-S})\leq R(s_n,s_m)
    \end{equation*}
    where we have used that $\mathcal{I}\geq n+m$.
     \end{proof}       
     




\end{document}